\newcommand{\Supp}{{\rm \Sigma}}
\newcommand{\R}{\mathbb{R}}
\newcommand{\Su}{\mathbb{S}}
\newcommand{\N}{\mathbb{N}}
\newcommand{\Z}{\mathbb{Z}}
\newcommand{\Q}{\mathbb{Q}}
\newcommand{\T}{\mathbb{T}}
\newcommand{\SL}{{\rm SL}}
\newcommand{\GL}{{\rm GL}}
\newcommand{\tr}{\mbox{tr}}
\newcommand{\Pp}{\mathbb{P}}
\newcommand{\EE}{\mathbb{E}}
\newcommand{\Sigmah}{{\Omega}}
\theoremstyle{plain}
\newtheorem{theorem}{Theorem}[section]
\newtheorem{proposition}{Proposition}[section]
\newtheorem{corollary}[proposition]{Corollary}
\newtheorem{lemma}[proposition]{Lemma}
\theoremstyle{definition}
\newtheorem{definition}{Definition}[section]
\newtheorem*{theorem*}{Theorem}
\theoremstyle{definition}
\newtheorem{remark}{Remark}[section]
\newtheorem{example}[theorem]{Example}
\numberwithin{equation}{section}
\newcommand{\abs}[1]{\left| #1 \right|} % absolute value
\newcommand{\norm}[1]{\left\|#1\right\|} % norm
\newcommand{\normtwo}[1]{% Peter Grill norm @tex.stackexchange.com
{\left\vert\kern-0.25ex\left\vert\kern-0.25ex\left\vert #1
    \right\vert\kern-0.25ex\right\vert\kern-0.25ex\right\vert} }
\newcommand{\medp}{\hat{\mathfrak{v}}}
\newcommand{\avg}[1]{\left< #1 \right>} % average
\newcommand{\om}{\omega}
\newsavebox\myboxA
\newsavebox\myboxB
\newlength\mylenA
\newcommand*\xoverline[2][0.75]{%
    \sbox{\myboxA}{$\m@th#2$}%
    \setbox\myboxB\null% Phantom box
    \ht\myboxB=\ht\myboxA%
    \dp\myboxB=\dp\myboxA%
    \wd\myboxB=#1\wd\myboxA% Scale phantom
    \sbox\myboxB{$\m@th\overline{\copy\myboxB}$}%  Overlined phantom
    \setlength\mylenA{\the\wd\myboxA}%   calc width diff
    \addtolength\mylenA{-\the\wd\myboxB}%
    \ifdim\wd\myboxB<\wd\myboxA%
       \rlap{\hskip 0.5\mylenA\usebox\myboxB}{\usebox\myboxA}%
    \else
        \hskip -0.5\mylenA\rlap{\usebox\myboxA}{\hskip 0.5\mylenA\usebox\myboxB}%
    \fi}
\newcommand{\comp}{^{\complement}}
\newcommand{\Proj}{\mathbb{P}(\R^m)}
\newcommand{\FF}{\mathscr{F}}
\newcommand{\SO}{{\rm SO}}
\newcommand{\ind}{\mathds{1}}
\newcommand{\B}{\mathscr{B}}
\newcommand{\dist}{{\rm dist}}
\newcommand{\Bscr}{\mathscr{B}}
\newcommand{\Nscr}{\mathscr{N}}
\newcommand{\QP}{\mathrm{Q}}
\newcommand\restr[2]{{% we make the whole thing an ordinary symbol
  \left.\kern-\nulldelimiterspace % automatically resize the bar with \right
  #1 % the function
  \vphantom{\big|} % pretend it's a little taller at normal size
  \right|_{#2} % this is the delimiter
  }}
\newcommand{\Hscr}{\mathscr{H}}
\newcommand{\Prob}{\mathrm{Prob}}
\newcommand{\Ascr}{\mathscr{A}}
\newcommand{\Gscr}{\mathscr{G}}
\newcommand{\Lscr}{\mathscr{L}}
\newcommand{\supp}{\mathrm{supp}}
\newcommand{\Cscr}{C^0}
\newcommand{\Qop}{\mathcal{Q}}
\newcommand{\diam}{\mathrm{diam}}
\newcommand{\medr}{\mathfrak{v}}
\newcommand{\gpr}{\sigma}
\newcommand{\Mscr}{\mathscr{M}}
\newcommand{\alfa}{\mathfrak{a}}
\newcommand{\muh}{{\nu}}
\newcommand{\etah}{{\tilde \eta}}
\newcommand{\PF}{\hat F}
\newcommand{\PFh}{\hat F}
\newcommand{\Pmeas}{\mathrm{m}}
\newcommand{\Qmeas}{\mathfrak{m}}
\newcommand{\Xplus}{{X^+}}
\newcommand{\fle}{L_1}
\newcommand{\lexp}{L}
\newcommand{\dle}{\hat L}
\newcommand{\Kscr}{\mathscr{K}}
\newcommand{\Sscr}{\mathscr{S}}
\newcommand{\Vscr}{\mathscr{V}}
\newcommand{\Bcal}{\mathcal{B}}
\title[Generic Mixed Random-Quasiperiodic Cocycles]{Furstenberg Theory of Mixed Random-Quasiperiodic Cocycles}
\date{}
\begin{document}

\author[A. Cai]{Ao Cai}
\address{Departamento de Matem\'atica and CMAFcIO\\
Faculdade de Ci\^encias\\
Universidade de Lisboa\\
Portugal
}
\email{acai@fc.ul.pt}

\author[P. Duarte]{Pedro Duarte}
\address{Departamento de Matem\'atica and CMAFcIO\\
Faculdade de Ci\^encias\\
Universidade de Lisboa\\
Portugal
}
\email{pmduarte@fc.ul.pt}

\author[S. Klein]{Silvius Klein}
\address{Departamento de Matem\'atica, Pontif\'icia Universidade Cat\'olica do Rio de Janeiro (PUC-Rio), Brazil}
\email{silviusk@puc-rio.br}

\begin{abstract}
We derive a criterion for the positivity of the maximal Lyapunov exponent of generic mixed random-quasiperiodic linear cocycles, a model introduced in a previous work. This result is applicable to cocycles corresponding to Schr\"odinger operators with randomly perturbed quasiperiodic potentials. Moreover, we establish an average uniform convergence to the Lyapunov exponent in the Oseledets theorem. 
\end{abstract}

\maketitle

%\tableofcontents

\section{Introduction and main statements}\label{intro}
In a previous work~\cite{CDK-paper1} we introduced the concept of mixed random-quasiperiodic linear cocycles and studied some of its basic properties.
In simple terms, we can think of the iterates of such dynamical systems as {\em random products} (compositions) {\em of quasiperiodic cocycles}. 

One of the goals of this paper is to extend the results of what we refer to as Furstenberg's theory on random products of matrices to this more general setting. That is, for our mixed systems, we derive a Furstenberg-type formula to represent the maximal Lyapunov exponent; we study Kifer's non-random filtration for generic cocycles; we obtain a criterion for the positivity of the maximal Lyapunov exponent and we establish its continuity under some generic assumptions.

Such topics have previously been studied in other settings, more general or related to ours, see the monograph of Y. Kifer~\cite{Kifer-book} and the survey of A. Furman~\cite{FurmanSurvey}. More recently, the positivity and the continuity of the Lyapunov exponent for a more particular mixed model was studied by J. Bezerra and M. Poletti~\cite{Jamerson-M}. Furthermore, during a recent workshop\footnote{New trends in Lyapunov exponents, held in Lisbon in February of 2022.} we became aware of some intersection between results in this paper and the work in preparation by A. Gorodetski and V. Kleptsyn on non-stationary versions of Furstenberg's theorem on random matrix products (see page 18 of the final report~\cite{GK-birs} of a BIRS workshop where their results were announced). 

As such, not all of the results on Furstenberg's theory presented in this paper are completely new, but we mean to provide a coherent, relatively self contained exposition of these concepts and results in the specific context of mixed random-quasiperiodic cocycles.

Moreover, we also establish the uniform convergence to the Lyapunov exponent of the expected value (in the random component) of the geometric averages of the iterates of the cocycle. Together with the aforementioned results of Furstenberg's theory, this forms the basis for the study of the statistical properties of the iterates of such mixed cocycles, which will be considered in a future work.

\medskip

A relevant example of mixed random-quasiperiodic cocycle arises from considering certain discrete Schr\"odinger operators, which represent an important motivation for this work. More precisely, given a potential function $v \in C^0 (\T^d, \R)$, where $\T^d = (\R/\Z)^d$ is the $d$-dimensional torus, an ergodic frequency $\alpha \in \T^d$, an i.i.d. sequence of real random variables $\{w_n\}_{n\in\Z}$ and a phase $\theta \in \T^d$,  consider the discrete Schr\"odinger operator $H (\theta)$ on $l^2 (\Z)$, defined by
\begin{equation}\label{op1}
(H (\theta)  \psi)_n = - \psi_{n+1} - \psi_{n-1} + \left( v (\theta + n \alpha) + w_n \right) \, \psi_n \quad \forall n \in \Z ,
\end{equation}
for all $\psi = \{\psi_n\}_{n\in\Z} \in l^2 (\Z)$.

The i.i.d. sequence $\{w_n\}$ can be interpreted as a random perturbation of the quasiperiodic potential $\{v_n (\theta)\}$, where $v_n (\theta) := v (\theta + n \alpha)$.

\medskip

We may instead randomize the frequency, that is, given an i.i.d sequence $\{\alpha_n\}_{n\in\Z}$ of random variables with values on the torus $\T^d$, 
consider the discrete Schr\"odinger operator
\begin{equation}\label{op2}
(H (\theta)  \psi)_n = - \psi_{n+1} - \psi_{n-1} + v (\theta + \alpha_0 + \cdots + \alpha_{n-1} )  \, \psi_n \quad \forall n \in \Z .
\end{equation}

We may, of course, both randomize the frequency and randomly perturb the resulting potential, thus obtaining the mixed Schr\"odinger operator
\begin{equation}\label{op3}
(H (\theta)  \psi)_n = - \psi_{n+1} - \psi_{n-1} + \left( v_n (\theta) + w_n \right) \, \psi_n \quad \forall n \in \Z,
\end{equation}
where
$$v_n (\theta) = v (\theta + \alpha_0 + \cdots + \alpha_{n-1} ) \, .$$

Given any of the above models, consider the corresponding Schr\"odinger (or eigenvalue) equation
$$H (\theta) \, \psi = E \, \psi \, .$$
When solved formally, this equation gives rise to the mixed random-quasiperiodic multiplicative process
$$\Ascr_E^n (\theta) := \prod_{j=n-1}^0 \begin{bmatrix} v_{j} (\theta)  + w_j - E & -1 \\ 1 & 0 \end{bmatrix} \, ,$$
whose average asymptotic behavior can be studied in the general framework of this paper.

In particular, under very mild assumptions on the underlying probability distributions, we establish the positivity for all energies $E$ of the Lyapunov exponent of the Schr\"odinger operators~\eqref{op1},~\eqref{op2} and~\eqref{op3}, as well as its continuity as a function of the energy (and of the potential function).

 The positivity of the Lyapunov exponent for these mixed models shows that in some sense, the randomness is dominant over the deterministic (quasiperiodic) part of the potential, that is, mixed random-quasiperiodic Schr\"odinger operators / linear cocycles tend to behave more like their random counterparts (the Lyapunov exponent of a quasiperiodic Schr\"odinger operator is zero in certain regimes, while that of random Schr\"odinger operators is always positive).
 
 Furthermore, by Kotani's theory, the positivity of the Lyapunov exponent implies the absence, almost surely, of absolutely continuous spectrum for the operators~\eqref{op1},~\eqref{op2} and~\eqref{op3}, and represents a strong indication of their localization, a question to be considered in a future project.
 
 \medskip
 
We now formally introduce our model.  For more details see~\cite{CDK-paper1}.
 
\subsection*{The base dynamics}  Let $(\Sigmah, \Bcal)$ be a standard Borel space
and let $\muh \in \Prob_c (\Sigmah)$ be a compactly supported Borel probability measure on $\Sigmah$. Regarding $(\Sigmah, \muh)$ as a space of symbols, we consider the corresponding (invertible) Bernoulli system $\left(X, \sigma, \muh^\Z \right)$, where $X := \Sigmah^\Z$ and $\sigma \colon X \to X$ is the (invertible) Bernoulli shift: for $\om = \{ \om_n \}_{n\in\Z}  \in X$,  
$\sigma \om  :=  \{ \om_{n+1} \}_{n\in\Z}$. Consider also its non invertible factor on $X^+ := \Sigmah^\N$.

Let $\T^d = \left(\R/\Z \right)^d$ be the torus of dimension $d$, and denote by $m$ the Haar measure on its Borel $\sigma$-algebra.
 
Given a continuous function $\alfa \colon \Sigmah \to \T^d$, the skew-product map
\begin{equation}\label{base map}
f \colon X \times \T^d \to X  \times \T^d \, , \quad f (\om, \theta) := \left( \sigma \om, \theta + \alfa ( \om_0) \right)
\end{equation}
will be referred to as a mixed random-quasiperiodic (base) dynamics. 

This map preserves the measure $\muh^\Z\times m$ and it is the natural extension of the non invertible map on
$X^+ \times \T^d$ which preserves the measure $\muh^\N\times m$ and is defined by the same expression.

We will call the measure $\muh$ ergodic, or ergodic with respect to $f$ when the mixed random-quasiperiodic system $\left( X \times \T^d, f, \muh^\Z\times m \right)$ is ergodic. See~\cite[Section 2]{CDK-paper1} for various characterizations of the ergodicity of this system.

Note that in the particular case when $\alfa$ is the constant function $\alfa (\om_0) \equiv \alpha$, the corresponding mixed system $f$ is simply the product between the Bernoulli shift and the torus translation by $\alpha$, whose ergodicity implies that of $f$.

Another relevant example is obtained by choosing a probability space $(S, \rho)$ and a measure $\mu \in \Prob (\T^d)$ and letting $\Sigmah := \T^d \times S$, $\muh := \mu \times \rho$ and $\alfa$ be the projection in the first coordinate. Using Proposition 2.1 and Theorem 2.3 in~\cite{CDK-paper1}, the ergodicity of $\muh$ is equivalent to the existence, for every $k \in \Z^d \setminus \{0\}$, of a frequency $\alpha \in \supp (\mu)$ such that $\avg{k, \alpha} \notin \Z$.

\subsection*{The group of quasiperiodic cocycles} Given a  frequency $\alpha \in \T^d$, let $\tau_\alpha (\theta) = \theta + \alpha$ be the corresponding ergodic translation on $\T^d$. Consider $A \in C^0 (\T^d, \SL_m (\R))$ a continuous matrix valued function on the torus. A quasiperiodic cocycle is a skew-product map of the form 
$$\T^d \times \R^m \ni (\theta, v) \mapsto \left( \tau_\alpha (\theta),  A(\theta) v \right) \in \T^d \times \R^m \, .$$

This cocycle can thus be identified with the pair $(\alpha, A)$. Consider the set
$$\Gscr=\Gscr(d,m):= \T^d\times C^0(\T^d,\SL_m(\R))$$
of all quasiperiodic cocycles.  

This set is a Polish metric space when equipped with the product metric (in the second component we consider the uniform distance). The space $\Gscr$ is also a group, and in fact a topological group, with the natural composition and inversion operations
\begin{align*}
(\alpha,A)\circ (\beta, B) &:= (\alpha+\beta, (A\circ \tau_\beta) \, B)  \\
(\alpha,A)^{-1} &:= ( -\alpha, (A\circ \tau_{-\alpha})^{-1} ) \, .
\end{align*}

Given $\muh \in \Prob_c (\Gscr)$ let $\om = \left\{ \om_n \right\}_{n\in\Z}$, $\om_n = (\alpha_n, A_n)$ be an i.i.d. sequence of random variables  in $\Gscr$  with law $\muh$. Consider the corresponding multiplicative process in the group $\Gscr$
\begin{align*}
\Pi_n & = \om_{n-1} \circ \cdots \circ \om_1 \circ \om_0 \\
& = \left( \alpha_{n-1} + \cdots + \alpha_1 + \alpha_0, \, ( A_{n-1} \circ \tau_{\alpha_{n-2} + \cdots + \alpha_0}) \cdots (A_1 \circ \tau_{\alpha_0} ) \, A_0   \right) \, .
\end{align*}

In order to study this process in the framework of ergodic theory, we model it by the iterates of a  linear cocycle.

 \subsection*{The fiber dynamics} Given $\muh \in \Prob_c (\Gscr)$, let $\Sigmah \subset \Gscr$ be a closed subset such that $\Sigmah \supset \supp \muh$. 
We regard $(\Sigmah, \muh)$ as a space of symbols and consider, as before, the shift $\sigma$ on the space $X := \Sigmah^\Z$ of sequences $\om = \left\{ \om_n \right\}_{n\in\Z}$ endowed with the product measure $\muh^Z$ and the product topology (which is metrizable). 
The standard projections 
\begin{align*}
\alfa \colon \Sigmah \to \T^d, & \qquad \alfa (\alpha, A) = \alpha \\
\Ascr \colon \Sigmah \to C^0 (\T^d, \SL_m (\R)), & \quad \quad \Ascr (\alpha, A) = A
\end{align*}
determine the linear cocycle $F=F_{(\alfa,\Ascr)} \colon X\times\T^d\times\R^m \to X\times\T^d\times\R^m$ defined by
$$ F(\omega,\theta, v) := \left(\sigma \omega, \theta+\alfa(\omega_0), \Ascr(\omega_0)(\theta)\, v \right)  .$$
The non-invertible version of this map, with the same expression, is defined on $\Xplus\times \T^d\times \R^m$, where $\Xplus=\Sigmah^{\N}$.

Thus the base dynamics of the cocycle $F$ is the mixed random-quasiperiodic map $f$ defined above,
$$ X \times \T^d \ni (\om, \theta) \mapsto f (\om, \theta) := \left(\sigma \om, \theta + \alfa (\om_0) \right) \in X \times \T^d ,$$
while the fiber action is induced by the map
$$X \times \T^d \ni ( \om, \theta ) \mapsto \Ascr (\om, \theta) := \Ascr (\om_0) (\theta) \in \SL_m (\R) .$$

The skew-product $F$ will then be referred to as a {\em mixed random-quasiperiodic cocycle}. The space of mixed cocycles $F = F_{(\alfa, \Ascr)}$ is a metric space with the uniform distance
$$
\dist \left( (\alfa,\Ascr), \, (\alfa',\Ascr') \right)= \norm{\alfa-\alfa'}_0 + \norm{\Ascr-\Ascr'}_0 \, .
$$

For $\om = \{ \om_n \}_{n\in\Z} \in X$ and $j \in \N$ consider the composition of random translations
\begin{align*}
\tau_\om^j & := \tau_{\alfa(\om_{j-1})} \circ \cdots \circ \tau_{\alfa(\om_0)} 
 = \tau_{ \alfa(\om_{j-1}) + \cdots +  \alfa(\om_0) } = \tau_{\alfa (   \om_{j-1} \circ \cdots \circ \, \om_0  ) } \, .
\end{align*}

The iterates of the cocycle $F$ are then given by
$$F^n (\om, \theta, v) = \left( \sigma^n \om, \tau_\om^n (\theta), \, \Ascr^n (\om) (\theta) v \right) ,$$
where
\begin{align*}
\Ascr^n (\om) & = \Ascr \left( \om_{n-1} \circ \cdots \circ \om_1 \circ \om_0 \right) \\
& = \left( \Ascr (\om_{n-1}) \circ \tau_\om^{n-2}  \right) \, \cdots \,  \left( \Ascr (\om_{1}) \circ \tau_\om^{0}  \right) \, \Ascr (\om_0) \, .
\end{align*}
Thus  $\Ascr^n(\omega)$ can be interpreted as a random product of
quasiperiodic cocycles, driven by the measure $\muh$ on the group $\Gscr$ of such cocycles. For convenience we also denote $\Ascr^n (\om, \theta) := \Ascr^n (\om) (\theta)$.

\medskip

By the subadditive ergodic theorem, the limit of $\displaystyle \frac{1}{n} \, \log \norm{ \Ascr^n (\om) (\theta)}$ as $n\to\infty$ exists for $\muh^\Z \times m$ a.e. $(\om, \theta) \in X \times \T^d$. Recall that for simplicity we call the measure $\muh$ ergodic if the base dynamics $f$ is ergodic w.r.t. $\muh^\Z \times m$. In this case, the limit is a constant that depends only on the measure $\muh$ and it is called the first (or maximal) Lyapunov exponent of the cocycle $F$, which we denote by $L_1 (\muh)$. Thus
\begin{align*}
L_1 (\muh) & = \lim_{n\to\infty} \frac{1}{n} \, \log \norm{ \Ascr^n (\om) (\theta)} \quad \text{for} \quad \muh^\Z \times m \text{ a.e. } (\om, \theta) \\
& = \lim_{n\to\infty} \, \int_{X \times \T^d}  \frac{1}{n} \, \log \norm{ \Ascr^n (\om) (\theta)} \, d ( \muh^\Z \times m ) \, .
\end{align*}

The second Lyapunov exponent $L_2 (\muh)$ is defined similarly, where instead of the norm (that is, the first singular value) of the iterates of the cocycle, we consider the second singular value.

\subsection*{The Schr\"{o}dinger cocycle} Given an ergodic translation $\alpha \in \T^d$, a potential function $v \in C^0 (\T^d, \R)$ and an energy $E \in \R$, consider the corresponding quasiperiodic Schr\"{o}dinger cocycle $(\alpha,S_E)$ where
$$
S_E (\theta)=\begin{bmatrix} v (\theta)-E & -1 \\ 1 & 0 \end{bmatrix}.
$$

This cocycle is related in the usual way to the discrete quasiperiodic Schr\"odinger operator
$$(H (\theta)  \psi)_n = - \psi_{n+1} - \psi_{n-1} +  v (\theta + n \alpha)  \, \psi_n \quad \forall n \in \Z .$$

For any real number $\om$, if we denote
$$P (\om) := \begin{bmatrix} 1 & \om  \\ 0 & 1 \end{bmatrix} \, $$
then
$$\Ascr_E (\om, \theta) := P (\om) \, S_E (\theta) = \begin{bmatrix} v (\theta) + \om - E & -1 \\ 1 & 0 \end{bmatrix} .$$

\medskip

Given a measure $\rho \in \Prob_c (\R)$ and an i.i.d. sequence $\{w_n\}_{n\in\Z}$ of real valued random variables with common law $\rho$, the mixed random-quasiperiodic cocycle corresponding to the Schr\"odinger operator~\eqref{op1} is then driven by the measure $\muh_E \in \Prob_c (\Gscr (d, 2) )$,
$$\muh_E := \delta_\alpha \times \int_\R \delta_{P (\om) S_E} \, d \rho (\om) \, .$$

As explained earlier, since $\alpha$ is an ergodic translation, the measures $\muh_E$ are automatically ergodic. Note that their supports are contained in the closed set 
$$\Sigmah := \{\alpha\} \times \left\{ P (\om) \, S_E \colon \om \in \supp (\rho), E \in \R \right\} \, .$$

\medskip

If instead we randomize the frequency, that is, given a probability $\mu \in \Prob (\T^d)$ and an i.i.d  sequence $\{\alpha_n\}_{n\in\Z}$ of frequencies with common law $\mu$, the mixed random-quasiperiodic cocycle corresponding to the Schr\"odinger operator~\eqref{op2} is driven by the measure $\muh_E \in \Prob_c (\Gscr (d, 2) )$,
$$\muh_E :=  \mu   \times  \delta_{S_E}  \, .$$

As explained earlier, the measure $\muh_E$ is ergodic if and only if for every $k \in \Z^d \setminus \{0\}$ there is $\alpha \in \supp (\mu)$ such that $\avg{k, \alpha} \notin \Z$. Note that the supports of the measures $\muh_E$ are contained in the closed set
$$\Sigmah := \supp (\mu) \times \left\{ S_E \colon E \in \R \right\} \, .$$

\medskip

We may of course choose to randomize both the frequency and the quasiperiodic cocycle:
given probability measures $\rho \in \Prob_c (\R)$ and 
$\mu \in \Prob (\T^d)$, the mixed random-quasiperiodic cocycle corresponding to the Schr\"odinger operator~\eqref{op3} is driven by the measure $\muh_E \in \Prob_c (\Gscr (d, 2) )$,
$$\muh_E:= \mu \times \int_\R \delta_{P (\om) S_E} \, d \rho (\om) \, .$$

Again, these measures are ergodic if and only if for every $k \in \Z^d \setminus \{0\}$ there is $\alpha \in \supp (\mu)$ such that $\avg{k, \alpha} \notin \Z$. Moreover, their supports are contained in the closed set 
$$\Sigmah := \supp (\mu) \times \left\{ P (\om) \, S_E \colon \om \in \supp (\rho), E \in \R \right\} \, .$$

\subsection*{The main statements} In Section~\ref{generic} we will derive the main results of Furstenberg's theory for the mixed random-quasiperiodic cocycles introduced above. Some of these results, like Furstenberg's formula and the continuity of the Lyapunov exponent for generic cocycles, had already been established in a setting that includes ours by Y. Kifer~\cite{Kifer-book}. 

One of the main results of this paper is the following criterion for the positivity of the maximal Lyapunov exponent.

\begin{theorem}\label{thm intro positivity}
Let $\muh\in \Prob_c(\Gscr)$ and assume that $\muh$ is ergodic, non-compact and strongly irreducible.
Then $\fle(\muh)>0$.
\end{theorem}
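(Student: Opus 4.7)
The plan is to argue by contradiction, adapting Furstenberg's classical positivity argument to the fibered mixed random-quasiperiodic setting. Suppose for contradiction that $\fle(\muh) = 0$. By the Furstenberg formula to be established later in this paper, there exists a $\muh$-stationary Borel probability measure $\eta$ on $\T^d \times \Proj$, projecting onto the Haar measure $m$ on $\T^d$, such that
$$\fle(\muh) \;=\; \int_{\Gscr} \int_{\T^d \times \Proj} \log \frac{\norm{A(\theta)\,v}}{\norm{v}} \, d\eta(\theta, \hatv)\, d\muh(\alpha, A).$$

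The crucial step is to upgrade this $\muh$-stationarity to genuine invariance under each element of $\supp \muh$ under the assumption $\fle(\muh) = 0$. I would adapt a Furstenberg--Kifer style invariance principle to our setting: when the top Lyapunov exponent vanishes, the random pull-backs of $\eta$ along the cocycle converge, and the ergodicity of the mixed base $\left( X \times \T^d, f, \muh^\Z \times m \right)$ forces the limit to coincide with $\eta$ itself. This should yield the invariance
$$(\alpha, A)_\ast \eta \;=\; \eta \quad \text{for every } (\alpha, A) \in \supp \muh,$$
where the action on $\T^d \times \Proj$ is $(\theta, \hatv) \mapsto \left( \theta + \alpha, \widehat{A(\theta)\,v} \right)$.

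Once invariance is established, strong irreducibility enters as follows. Disintegrate $\eta = \int_{\T^d} \eta_\theta \, dm(\theta)$. If, on a positive-$m$-measure set of $\theta$, the support of $\eta_\theta$ were contained in a finite union of proper projective subspaces, then choosing such a decomposition of minimum cardinality and using the invariance above would produce a finite family of measurable Grassmannian sections $\theta \mapsto V_i(\theta)$ permuted by every element of $\supp \muh$, directly contradicting strong irreducibility. Hence for $m$-a.e.\ $\theta$ the support of $\eta_\theta$ is not contained in any finite union of proper subspaces. A fibered version of the classical Furstenberg lemma (applied pointwise in $\theta$, together with the compactness of $\T^d$) then shows that the subgroup of $\Gscr$ stabilizing such an $\eta$ is compact. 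Since this stabilizer is closed and contains $\supp \muh$, it contains the closed subgroup generated by $\supp \muh$, which contradicts the non-compactness hypothesis on $\muh$.

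The main obstacle is the invariance principle in the second step. In the classical random matrix case one exploits the two-sided i.i.d.\ structure of the shift rather directly, whereas here one must additionally handle the quasiperiodic torus-rotation component on the base. Ergodicity of $f$ and compactness of $\T^d$ should make the pull-back and convergence arguments go through, but carefully verifying that the zero-exponent rigidity propagates through the combined base dynamics, and writing out the fibered analogue of the classical Furstenberg lemma, is the main technical work of the proof.
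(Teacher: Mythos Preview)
Your overall architecture---assume $\fle(\muh)=0$, upgrade a stationary measure $\eta$ on $\T^d\times\Proj$ to full $\Gscr_\muh$-invariance, then derive a contradiction from non-compactness and strong irreducibility---is exactly the paper's strategy. The gap is in your final step.

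You write that a fibered Furstenberg lemma ``applied pointwise in $\theta$'' forces the stabilizer of $\eta$ in $\Gscr$ to be compact. This does not work as stated, for two reasons. First, an element $(\beta,B)\in\Gscr_\muh$ sends the fiber measure $\eta_\theta$ to $\eta_{\theta+\beta}$, not back to $\eta_\theta$; there is no group of matrices fixing $\eta_\theta$ on which to invoke the classical lemma pointwise. Second, the non-compactness hypothesis in this paper is \emph{not} that $\Gscr_\muh$ is non-compact as a topological group (indeed $\Gscr$ is not even locally compact), but that the cocycle cannot be measurably conjugated into a compact subgroup of $\SL_m(\R)$. So even if you could conclude that the stabilizer is compact in $\Gscr$, this would not contradict the stated hypothesis.

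The paper resolves this by first \emph{straightening} the family $\{\eta_\theta\}$: using that the $\SL_m(\R)$-action on $\Prob(\Proj)$ is tame (Zimmer), one shows that for $m$-a.e.\ $\theta$ the measure $\eta_\theta$ lies in a single $\SL_m(\R)$-orbit, and a measurable selection gives $\Phi\colon\T^d\to\SL_m(\R)$ with $\Phi(\theta)_\ast\pi=\eta_\theta$ for a fixed $\pi\in\Prob(\Proj)$. The conjugated cocycle $\Phi(\theta+\beta)^{-1}B(\theta)\Phi(\theta)$ then takes values in the stabilizer $G=\{g:g_\ast\pi=\pi\}\subset\SL_m(\R)$, which is non-compact by hypothesis. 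Now the classical Furstenberg lemma applies to $G$ and $\pi$, yielding that $G$ is virtually reducible, hence $\muh$ is not strongly irreducible. Your sketch is missing this conjugation-to-a-single-fiber step, which is precisely where the paper's definitions of non-compact and strongly irreducible (both phrased via measurable conjugacies) come into play.
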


The concept of strongly irreducible measure on the group $\Gscr$ of quasiperiodic cocycles is rather technical, see Definition~\ref{non-compact/strongly irreducible, Zariski: SLm}. A more intuitive sufficient condition for strong irreducibility is the non-existence of a finite union of proper measurable sub-bundles which is invariant under $\muh$-a.e. element of the group (see Proposition~\ref{strongly irreducible}). This is the analogue of the classical concept of strong irreducibility in the group of matrices. 

Besides the classical Furstenberg's theorem, positivity criteria for the Lyapunov exponent were obtained in other, more abstract settings, see~\cite[Theorem 3.17]{FurmanSurvey} and~\cite[Theorem 6.1]{FurmanMonod}. However, they do not apply to our setting, as our group $\Gscr$ is not locally compact. 

This criterion is applicable to the cocycles corresponding to the Schr\"odinger operators~\eqref{op1},~\eqref{op2} and~\eqref{op3} and in Section~\ref{example} we derive the following consequences. 

\begin{proposition}\label{intro prop1}
Consider the Schr\"odinger operator~\eqref{op1} with randomly perturbed quasiperiodic potential and the corresponding cocycles driven by the measures
$\muh_E := \delta_\alpha \times \int_\R \delta_{P (\om) S_E} \, d \rho (\om)$
where $\rho \in \Prob_c (\R)$.

If $\supp (\rho)$ has more than one element, then $L_1 (\muh_E) > 0$ $\forall E \in \R$. Moreover, the map $E \mapsto L_1 (\muh_E)$ is continuous.
\end{proposition}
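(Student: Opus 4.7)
The strategy is to verify the three hypotheses of Theorem~\ref{thm intro positivity} for each measure $\muh_E$ and then to invoke the continuity statement for the Lyapunov exponent on generic cocycles established in Section~\ref{generic}. Ergodicity of $\muh_E$ is immediate: as noted right after~\eqref{base map}, the constancy of the frequency $\alpha$ makes the base dynamics a product of the Bernoulli shift with the (ergodic, by hypothesis) torus translation $\tau_\alpha$.

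For non-compactness, I exploit the hypothesis that $\supp(\rho)$ contains two distinct points $\om_1 \ne \om_2$, so that both cocycles $(\alpha, P(\om_i) S_E)$ lie in $\supp(\muh_E)$. A direct computation in the group $\Gscr$ using the formulas for composition and inversion gives
\[
(\alpha, P(\om_1) S_E) \circ (\alpha, P(\om_2) S_E)^{-1} = (0, P(\om_1 - \om_2)),
\]
a nontrivial parabolic constant cocycle whose $n$-th iterates $(0, P(n(\om_1-\om_2)))$ have norms tending to infinity, so the closed subgroup of $\Gscr$ generated by $\supp(\muh_E)$ is non-compact. For strong irreducibility, I invoke the sufficient condition from Proposition~\ref{strongly irreducible} and rule out a finite family of measurable line fields $L(\theta) = \{\ell_1(\theta), \ldots, \ell_k(\theta)\} \subset \Pp(\R^2)$ satisfying $P(\om) S_E(\theta) L(\theta) = L(\theta + \alpha)$ for $\rho$-a.e.\ $\om$ and a.e.\ $\theta$. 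Comparing this identity for $\om = \om_1$ and $\om = \om_2$ forces $L(\theta+\alpha)$ to be invariant under the nontrivial parabolic $P(\om_1 - \om_2)$ for a.e.\ $\theta$; since the only nonempty finite invariant subset of such a parabolic in $\Pp(\R^2)$ is the fixed direction $\{[e_1]\}$, we must have $k=1$ and $\ell_1 \equiv [e_1]$. But then substituting back yields $P(\om) S_E(\theta) [e_1] = [v(\theta)-E+\om : 1] \ne [e_1]$, a contradiction.

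With the three hypotheses verified, Theorem~\ref{thm intro positivity} delivers $L_1(\muh_E) > 0$ for every $E \in \R$. Continuity of $E \mapsto L_1(\muh_E)$ then follows from the general continuity result of Section~\ref{generic}, applied to the manifestly continuous family $E \mapsto \muh_E$ in $\Prob_c(\Gscr)$: the verification above of non-compactness and strong irreducibility is insensitive to $E$, so these hypotheses persist uniformly throughout the family. The main technical subtlety I anticipate is matching the elementary matrix-group calculations sketched above to the precise definitions of non-compactness and strong irreducibility for measures on $\Gscr$ given in Section~\ref{generic}; once that dictionary is in hand, the proof reduces to the single parabolic obstruction identified here.
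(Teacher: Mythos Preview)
Your proposal is correct and reaches the same conclusion as the paper, but by a more hands-on route. The paper argues via the monodromy-group machinery: it exhibits the two parabolics $P(\omega_1-\omega_2)$ and $S_E(\theta)^{-1}P(\omega_2-\omega_1)S_E(\theta)$ in the monodromy group $G_\theta(\muh_E)$, notes that their product has trace $2+(\omega_1-\omega_2)^2>2$, concludes that $G_\theta(\muh_E)$ is stably non-compact and stably strongly irreducible, and then invokes Proposition~\ref{Zariski criterion} to transfer these properties to $\muh_E$. You instead work directly at the cocycle level: you produce the constant parabolic cocycle $(0,P(\omega_1-\omega_2))\in\Gscr_{\muh_E}$ and use it twice, once for non-compactness and once (via the sub-bundle criterion of Proposition~\ref{strongly irreducible}(2)) for strong irreducibility. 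Your argument is more elementary and self-contained for this specific example; the paper's monodromy approach is more systematic and is reused verbatim for the other Schr\"odinger examples in Section~\ref{example}.

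One point deserves the sharpening you yourself flag. Your non-compactness step shows that $\Gscr_{\muh_E}$ is unbounded as a subset of $\Gscr$, but Definition~\ref{non-compact/strongly irreducible, Zariski: Gscr} asks for something else: the non-existence of a measurable $M\colon\T^d\to\SL_2(\R)$ conjugating the cocycle into a compact subgroup $G_0\subset\SL_2(\R)$. The bridge is short once stated: were such $M$ and $G_0$ to exist, evaluating the conjugacy identity at $(0,P(\omega_1-\omega_2))\in\Gscr_{\muh_E}$ would give $M(\theta)^{-1}P(\omega_1-\omega_2)M(\theta)\in G_0$ for a.e.\ $\theta$, which is impossible since a nontrivial parabolic element of $\SL_2(\R)$ has unbounded powers and hence lies in no compact subgroup. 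With this line added your argument is complete; the continuity claim then follows exactly as you say from Proposition~\ref{Kifer continuity}, since strong irreducibility (hence quasi-irreducibility) has been established for every $E$ and the supports $\supp(\muh_E)$ stay inside a fixed compact set for $E$ in any bounded range.
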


We actually derive a more general result, where the quasiperiodic potential is perturbed by random potentials, rather than numbers, see Proposition~\ref{ex prop2}.

We now consider the case of randomly chosen frequencies.

\begin{proposition}\label{intro prop1}
Consider the Schr\"odinger operator~\eqref{op2} with randomly perturbed frequencies and the corresponding cocycles driven by the measures $\muh_E :=  \mu   \times  \delta_{S_E} $
where $\mu \in \Prob (\T^d)$.

If there are two frequencies $\alpha, \beta \in \supp (\mu)$ such that $\beta - \alpha$ is an ergodic translation on $\T^d$, and if the potential function $v (\theta)$ is analytic and non-constant (or, more generally, if the continuous functions $v (\theta)$ and $v (\theta + \beta - \alpha)$ are transversal), then $L_1 (\muh_E) > 0$ $\forall E \in \R$. Moreover, the map $E \mapsto L_1 (\muh_E)$ is continuous.
\end{proposition}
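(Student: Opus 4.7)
The plan is to apply Theorem~\ref{thm intro positivity} to $\muh_E$, so I need to verify that $\muh_E$ is ergodic, non-compact, and strongly irreducible; continuity of $E\mapsto L_1(\muh_E)$ will then follow from the generic continuity of the Lyapunov exponent established in Section~\ref{generic}.

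Ergodicity is immediate from the criterion recalled in the introduction: since $\gamma := \beta-\alpha$ is an ergodic translation, $\langle k,\gamma\rangle\notin\Z$ for every nonzero $k\in\Z^d$, so for each such $k$ at least one of $\langle k,\alpha\rangle,\langle k,\beta\rangle$ lies outside $\Z$. Non-compactness should follow from the observation that $S_E(\theta)$ has trace $v(\theta)-E$, which on the set where $|v(\theta)-E|>2$ (a set of positive Lebesgue measure for every $E$, by non-constancy of $v$) gives hyperbolic matrices, so iterated products along such base points grow exponentially and cannot be conjugated into a compact subgroup.

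The main obstacle is strong irreducibility, which by Proposition~\ref{strongly irreducible} reduces to excluding a finite family $\{L_1(\theta),\dots,L_k(\theta)\}$ of measurable line-sections of $\T^d\times\R^2$ that is invariant under $\muh_E$-a.e.\ element, i.e.\
$$S_E(\theta)\,\{L_j(\theta)\}_{j=1}^k = \{L_j(\theta+\alpha')\}_{j=1}^k \quad\text{for a.e. }\theta$$
and $\mu$-a.e.\ $\alpha'\in\supp(\mu)$. Regarding $\Psi(\theta):=\{L_j(\theta)\}_j$ as a measurable map into the symmetric product $\mathrm{Sym}^k(\Pp(\R^2))$ and specializing the invariance to the two frequencies $\alpha,\beta\in\supp(\mu)$, one obtains $\Psi\circ\tau_\alpha=\Psi\circ\tau_\beta$ a.e., so $\Psi$ is $\tau_\gamma$-invariant. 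Ergodicity of $\tau_\gamma$ forces $\Psi$ to be a.e.\ equal to a fixed configuration $\{\ell_1,\dots,\ell_k\}\subset\Pp(\R^2)$, which then must be preserved by $S_E(\theta)$ for a.e.\ $\theta$.

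To conclude, I would partition $\T^d$ according to the (finitely many) permutations $\pi\in S_k$ of $\{\ell_1,\dots,\ell_k\}$ realized by the action of $S_E(\theta)$. On each piece, the Möbius action of $S_E(\theta)$ on $\Pp(\R^2)$ (namely $s\mapsto 1/(v(\theta)-E-s)$ in affine coordinates) together with the constraint $S_E(\theta)\,\ell_j=\ell_{\pi(j)}$ uniquely determines $v(\theta)$ (or else gives an inconsistency), so $v$ is constant on each piece. Hence $v$ takes at most $k!$ values on a set of full measure, contradicting the non-constancy of the analytic function $v$; in the weaker continuous setting, the transversality of $v$ and $v\circ\tau_\gamma$ is designed precisely to rule out such finite-valued degeneracies. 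Theorem~\ref{thm intro positivity} then yields $L_1(\muh_E)>0$, and continuity of $L_1$ in $E$ follows from the continuity of the Lyapunov exponent on strongly irreducible, non-compact, ergodic cocycles proved in Section~\ref{generic}.
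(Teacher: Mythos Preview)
Your strong irreducibility argument is correct and genuinely different from the paper's. The paper works via the monodromy group (Proposition~\ref{Zariski criterion}): writing $A=S_E$, it computes
\[
(0,D_1):=(\beta,A)(\alpha,A)^{-1}(\beta,A)^{-1}(\alpha,A),\qquad
(0,D_2):=(\beta,A)(\alpha,A)(\beta,A)^{-1}(\alpha,A)^{-1},
\]
and obtains the parabolic matrices
\[
D_1(\theta)=\begin{pmatrix}1&0\\ \varphi(\theta)&1\end{pmatrix},\qquad
D_2(\theta)=\begin{pmatrix}1&\psi(\theta)\\ 0&1\end{pmatrix},
\]
with $\varphi(\theta)=v(\theta+\alpha-\beta)-v(\theta)$ and $\psi(\theta)=v(\theta-\beta)-v(\theta-\alpha)$. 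Analyticity (or transversality) gives a $\theta_0$ with $\varphi(\theta_0)\psi(\theta_0)\neq 0$, so $G_{\theta_0}(\muh_E)$ contains two transverse parabolics and is therefore stably non-compact and stably strongly irreducible; both properties then transfer to $\muh_E$ by Proposition~\ref{Zariski criterion}. Your route---using the two frequencies to force $\tau_\gamma$-invariance of the line configuration $\Psi$, hence constancy by ergodicity of $\tau_\gamma$, and then reading off that $v$ would take only finitely many values---is a clean alternative that avoids the monodromy machinery.

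However, your non-compactness argument has a genuine gap. The claim that $\{\theta:|v(\theta)-E|>2\}$ has positive measure for \emph{every} $E$ is false: take $v(\theta)=\cos(2\pi\theta)$ and $E=0$, where $|v(\theta)-E|\le 1$ for all $\theta$. So you cannot produce hyperbolic matrices $S_E(\theta)$ in this way, and even when you can, the passage from ``some $S_E(\theta)$ are hyperbolic'' to ``no measurable $M(\theta)$ conjugates the cocycle into a compact subgroup'' (Definition~\ref{non-compact/strongly irreducible, Zariski: Gscr}) is not immediate. The paper's monodromy computation sidesteps this entirely: the product $D_1(\theta_0)D_2(\theta_0)$ has trace $2+\varphi(\theta_0)\psi(\theta_0)\neq 2$, giving a hyperbolic element in the monodromy group for \emph{every} $E$. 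If you want to keep your direct approach, one fix is to observe that $S_E(\theta_1)S_E(\theta_2)^{-1}=\begin{pmatrix}1&v(\theta_1)-v(\theta_2)\\0&1\end{pmatrix}$ is a nontrivial unipotent whenever $v(\theta_1)\neq v(\theta_2)$; combined with your $\tau_\gamma$-invariance argument applied to a hypothetical conjugacy $M(\theta)$ into a compact group (forcing $M(\theta)SO(2)$ to be constant, hence $M_0^{-1}S_E(\theta)M_0\in SO(2)$ for a fixed $M_0$), this yields a unipotent element in $SO(2)$, a contradiction.
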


\begin{proposition}\label{intro prop3}
Consider the Schr\"odinger operator~\eqref{op3} with randomly perturbed frequencies and quasiperiodic potential and the corresponding cocycles driven by the measures
$\muh_E := \mu \times \int_\R \delta_{P (\om) S_E} \, d \rho (\om)$
where $\mu \in \Prob (\T^d)$ and $\rho \in \Prob_c (\R)$.

If $\supp (\mu)$ contains an ergodic frequency and if $\supp (\rho)$ has more than one element, then $L_1 (\muh_E) > 0$ $\forall E \in \R$. Moreover, the map $E \mapsto L_1 (\muh_E)$ is continuous.
\end{proposition}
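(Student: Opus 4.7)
The plan is to verify the three hypotheses of Theorem~\ref{thm intro positivity} for each $\muh_E$ and then invoke the continuity theorem from Section~\ref{generic}. The statement is essentially a combination of Propositions~\ref{intro prop1} and~\ref{intro prop1} (the second occurrence, for model~\eqref{op2}), and both the randomness of the frequency and the randomness of the perturbation are exploited, but for different hypotheses.

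For \emph{ergodicity}, the assumption that $\supp(\mu)$ contains an ergodic frequency $\alpha_0 \in \T^d$ means $\avg{k,\alpha_0}\notin\Z$ for every $k\in\Z^d\setminus\{0\}$, which is precisely the characterization of ergodicity of the base dynamics $\muh_E$ recalled after~\eqref{base map}. For \emph{non-compactness}, I would use the existence of two distinct points $\om_1\neq\om_2$ in $\supp(\rho)$: the fiber part of $\muh_E$ then sees both $P(\om_1)S_E$ and $P(\om_2)S_E$, whose product $P(\om_1-\om_2)$ is a non-trivial unipotent, ruling out any compact reduction of the support of $\muh_E$ in the sense of Definition~\ref{non-compact/strongly irreducible, Zariski: SLm}.

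The main obstacle is \emph{strong irreducibility}. Using the characterization in Proposition~\ref{strongly irreducible}, one must rule out any finite collection of proper measurable sub-bundles (here, measurable line fields $\theta\mapsto L_j(\theta)\in\Proj(\R^2)$ for $j=1,\ldots,k$) invariant under $\muh_E$-a.e. element of $\Gscr$. I would argue as follows. First, applying the invariance to two distinct perturbations $P(\om_1)S_E$ and $P(\om_2)S_E$ over the same base translation, the compositions $(P(\om_1)S_E)(P(\om_2)S_E)^{-1} = P(\om_1-\om_2)$ would permute the line fields $\{L_j(\theta)\}$ at $\muh_E$-a.e. $\theta$. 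Since $P(\om_1-\om_2)$ is a non-trivial unipotent with a unique invariant direction (the horizontal one), iteration forces each $L_j(\theta)$ to be this direction. Then, invariance under a single $P(\om)S_E$ would translate into invariance of the horizontal line field under the quasiperiodic Schr\"odinger cocycle $(\alpha, S_E)$ for some $\alpha \in \supp(\mu)$, in particular for the ergodic $\alpha_0$; this is the type of rigidity already handled in Proposition~\ref{intro prop1} and is ruled out by a direct computation on the Schr\"odinger expression of $S_E$.

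With the three hypotheses verified, Theorem~\ref{thm intro positivity} yields $\fle(\muh_E)>0$ for every $E\in\R$. For continuity, the map $E\mapsto S_E$ is continuous, hence so is $E\mapsto \muh_E$ in the weak-$\ast$ topology on $\Prob_c(\Gscr)$; combined with the non-compactness and strong irreducibility established above, this places $\{\muh_E\}_{E\in\R}$ in the generic class to which the continuity result of Section~\ref{generic} applies, giving continuity of $E\mapsto \fle(\muh_E)$.
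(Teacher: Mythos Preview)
Your proposal is correct and reaches the conclusion via the same high-level scheme (verify ergodicity, non-compactness, strong irreducibility, then apply Theorem~\ref{thm intro positivity} and Proposition~\ref{Kifer continuity}), but the way you check non-compactness and strong irreducibility differs from the paper's argument.

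The paper proceeds exactly as in Proposition~\ref{ex 1}: it works with the \emph{monodromy group} $G_\theta(\muh_E)$ and exhibits in it both the upper-triangular parabolic $P(\omega_1-\omega_2)$ and the lower-triangular parabolic $S_E(\theta)^{-1}P(\omega_2-\omega_1)S_E(\theta)$. Their product has trace $2+(\omega_1-\omega_2)^2>2$, so $G_\theta(\muh_E)$ is stably non-compact; the presence of two non-commuting parabolics also makes it stably strongly irreducible (Remark~\ref{SL2 stably strongly irreducible}). Proposition~\ref{Zariski criterion} then transfers both properties to $\muh_E$ in one stroke. You instead argue each property separately: non-compactness by noting that a conjugate of the non-trivial unipotent $P(\omega_1-\omega_2)$ cannot live in a compact subgroup, and strong irreducibility via the sub-bundle criterion of Proposition~\ref{strongly irreducible}(2), forcing any invariant finite family of line fields to collapse to the horizontal direction and then observing that $P(\omega)S_E(\theta)$ sends $(1,0)$ to a vector with non-zero second coordinate. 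Your route is more hands-on and avoids the monodromy machinery; the paper's route is more uniform across all the examples of Section~\ref{example} and dispatches both hypotheses with a single monodromy computation. Either way, the appeal to the ergodic frequency in $\supp(\mu)$ is needed only for ergodicity of the base, not for the irreducibility step, so your closing remark about ``the ergodic $\alpha_0$'' in the line-field argument is superfluous (the direct computation with $S_E$ already gives the contradiction).
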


\begin{remark}
Depending on the regularity class, the Lyapunov exponent of quasiperiodic cocycles can have discontinuity points, as shown by Y. Wang and J. You~\cite{WangYou}. The continuity results above show that the addition of random noise has a regularizing effect on the Lyapunov exponent, thus making the mixed cocycles behave more like their random counterparts (for which the Lyapunov exponent is always continuous). 
\end{remark}

A result of Furstenberg's theory in our setting, related to Kifer's non-random filtration, shows that given an ergodic measure $\muh \in \Prob_c (\Gscr)$, if $L_1 (\muh) > L_2 (\muh)$ and $\muh$ is quasi-irreducible, then the convergence
$$\frac{1}{n} \, \log \norm{\Ascr^n (\om, \theta) p } \to L_1 (\muh)$$
 holds for $\muh^\Z$-a.e. $\om$ and for all $\theta \in \T^d$ and $p \in \R^m$ unitary vector.
 
 It turns out that taking the expected value in $\om$, the convergence above is uniform in $(\theta, p)$. This kind of result is available in the classical setting of products of random matrices, where the uniformity of the convergence is in the direction $p$.
 We obtain its analogue in the mixed setting, where the uniformity also holds in the quasiperiodic variable (which is not completely surprising, given the unique ergodicity of the torus translation).
 
 \begin{theorem}\label{intro thm2}
 Let $\muh \in \Prob_c (\Gscr)$ be an ergodic, quasi-irreducible measure. If $L_1 (\muh) > L_2 (\muh)$ then
 $$\EE \left(  \frac{1}{n} \, \log \norm{\Ascr^n (\theta) p } \right) \to L_1 (\muh) $$
 uniformly in $(\theta, p) \in \T^d \times \Su^{m-1}$. 
 \end{theorem}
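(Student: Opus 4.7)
The plan is to realize $\EE\bigl[\frac{1}{n}\log\|\Ascr^n(\om,\theta)p\|\bigr]$ as the Ces\`aro average of iterates of a Markov--Feller operator on the compact metric space $\T^d\times\Proj$, and then to apply a uniform convergence theorem for uniquely ergodic Markov operators. Specifically, I define $Q\colon C(\T^d\times\Proj)\to C(\T^d\times\Proj)$ by
$$(Q\varphi)(\theta,\hat v):=\int_{\Sigmah}\varphi\bigl(\theta+\alfa(\om_0),\widehat{\Ascr(\om_0)(\theta)\,v}\bigr)\,d\muh(\om_0),$$
and the continuous observable
$$\phi_1(\theta,\hat v):=\int_{\Sigmah}\log\|\Ascr(\om_0)(\theta)\,v\|\,d\muh(\om_0),$$
where $v$ is any unit representative of $\hat v$. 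Both are continuous by the continuity of $\alfa$ and $\Ascr$ together with dominated convergence, and $Q$ is Feller.

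The cocycle identity $\Ascr^{k+1}(\om,\theta)=\Ascr(\om_k)(\tau_\om^k\theta)\,\Ascr^k(\om,\theta)$ gives the telescoping
$$\log\|\Ascr^n(\om,\theta)p\|=\sum_{k=0}^{n-1}\log\bigl\|\Ascr(\om_k)(\tau_\om^k\theta)\,\hat p_k(\om)\bigr\|,$$
where $\hat p_k(\om)$ denotes the direction of $\Ascr^k(\om,\theta)\,p$. Since $\om_k$ is independent of $(\om_0,\ldots,\om_{k-1})$, conditioning on the past and taking expectations yields
$$\frac{1}{n}\,\EE\log\|\Ascr^n(\theta)p\|=\frac{1}{n}\sum_{k=0}^{n-1}(Q^k\phi_1)(\theta,\hat p).$$

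The theorem thus reduces to uniform convergence of these Ces\`aro averages to $L_1(\muh)$. The ergodicity of $\muh$, via the characterization in \cite{CDK-paper1} by density of the subgroup generated by $\supp(\alfa_*\muh)$ in $\T^d$, implies the unique ergodicity of the base Markov operator $\bar Q$ on $\T^d$ with Haar measure $m$ as the sole stationary measure; consequently every $Q$-stationary measure projects onto $m$. Quasi-irreducibility combined with $L_1>L_2$, through the Furstenberg-theoretic results developed in Section~\ref{generic} (Furstenberg's formula and the projective contraction afforded by the spectral gap), then forces such a $Q$-stationary measure $\nu$ to be unique. The classical criterion that a Feller Markov operator on a compact metric space with a single invariant probability measure has Ces\`aro averages of continuous functions converging uniformly to the space average then yields
$$\frac{1}{n}\sum_{k=0}^{n-1}(Q^k\phi_1)(\theta,\hat p)\longrightarrow\int\phi_1\,d\nu$$
uniformly in $(\theta,\hat p)$, and Furstenberg's formula identifies the limit with $L_1(\muh)$.

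The main obstacle I anticipate is the uniqueness of the $Q$-stationary measure under the weaker hypothesis of quasi-irreducibility, as opposed to the strong irreducibility that underlies the classical Furstenberg uniqueness argument. The standard martingale argument---asserting that random push-forwards of any stationary measure converge almost surely to a Dirac mass on the Oseledets flag direction---should extend to the mixed random-quasiperiodic setting, but one must verify that quasi-irreducibility together with the spectral gap $L_1>L_2$ is enough to prevent $\nu$ from decomposing nontrivially along Kifer's non-random filtration; the simplicity of the top Lyapunov exponent is essential here to guarantee the one-dimensionality of the dominant direction and thereby to rule out any such decomposition.
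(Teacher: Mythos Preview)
Your reduction to Ces\`aro averages of the Markov operator $Q$ on $\T^d\times\Proj$ is correct and elegant; the identity
\[
\frac{1}{n}\,\EE\log\norm{\Ascr^n(\theta)p}=\frac{1}{n}\sum_{k=0}^{n-1}(Q^k\phi_1)(\theta,\hat p)
\]
holds exactly as you say. However, the step you flag as problematic---uniqueness of the $Q$-stationary measure under mere quasi-irreducibility---is indeed a genuine gap, and very possibly false: quasi-irreducibility does not rule out invariant sub-bundles with top exponent equal to $L_1(\muh)$, which can support additional stationary measures. The good news is that you do not need uniqueness at all. Quasi-irreducibility is \emph{equivalent} (Proposition~\ref{quasi-irred charact}, item (3)) to the statement that $\alpha_{\muh}(\eta)=\int\phi_1\,d\eta=L_1(\muh)$ for \emph{every} stationary $\eta\in\Prob_{\muh}(\QP)$. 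From this, uniform convergence of the Ces\`aro averages follows by the standard compactness argument: if $\bigl|\frac{1}{n}\sum_{k}Q^k\phi_1(\theta_n,\hat p_n)-L_1(\muh)\bigr|\geq\varepsilon$ along a subsequence, pass to a weak-$\ast$ limit of the empirical measures $\frac{1}{n}\sum_{k}(Q^\ast)^k\delta_{(\theta_n,\hat p_n)}$; the limit is stationary, hence integrates $\phi_1$ to $L_1(\muh)$, a contradiction. Note that this repaired argument never uses the hypothesis $L_1(\muh)>L_2(\muh)$.

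For comparison, the paper takes a completely different route. It first proves uniform convergence of the non-directional quantities $L_n(\theta)=\frac{1}{n}\EE\log\norm{\Ascr^n(\theta)}$ by showing they form a ``$\Qop_\mu$-almost invariant'' sequence and invoking a dichotomy (either uniform convergence or failure on a dense set). It then upgrades to the directional statement by a contradiction argument using the most expanding direction and the Avalanche Principle, which is where the gap hypothesis $L_1>L_2$ enters. Your approach, once corrected, is shorter and more conceptual, and it shows that the spectral gap hypothesis is in fact superfluous for this particular statement---something the paper's method does not reveal.
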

 
 This result represents the main ingredient that will allow us (in a future project) to study the spectral properties of the Markov operator associated to the mixed cocycle, which in turn will be used to obtain statistical properties for the iterates of the cocycle.

\section{Stochastic dynamical systems}\label{generalities}
 We introduce some general concepts that will be used throughout the paper.

Let $M$ be a metric space. A stochastic dynamical system (SDS) on $M$ is any continuous map  $K \colon M\to\Prob (M)$, $x\mapsto K_x$.
An SDS $K$ on $M$ induces a bounded linear operator (called the Markov operator)
$\Qop_K \colon C_b (M)\to C_b (M)$ defined by
$$ (\Qop_K \varphi)(x):= \int_M \varphi(y)\, dK_x(y) .$$
It also induces the adjoint operator
$\Qop_K^\ast :\Prob (M)\to \Prob (M)$ of $\Qop_K$ characterized by
$$ \Qop_K^\ast \nu = K\ast \nu  := \int_M  K_x\, d\nu(x) . $$
A measure $\nu\in \Prob(M)$ is called \textit{$K$-stationary} if
$\Qop_K^\ast \nu=\nu$. We denote by $\Prob_K(M)$ the convex and compact (when $M$ is compact) subspace of
all $K$-stationary probability measures on $M$.

Let $(G, \cdot)$ be a topological group acting on $M$ from the left. Denote by $\tau_g \colon M \to M$ the action on $M$ by $g\in G$, that is, $\tau_g (x) = g x$.

Given $\mu \in \Prob_c (G)$ and $\nu \in \Prob_c (M)$, the convolution $\mu \ast \nu \in \Prob_c (M)$ is given by
$$\mu \ast \nu (E) := \int_M \int_G \ind_E (g x) \, d \mu (g) d \nu (x) $$
for any Borel set $E \subset M$.

Then
$$\mu \ast \nu = \int_G \left( \tau_g \right)_\ast \nu \, d \mu (g) \, ,$$
where $\left( \tau_g \right)_\ast \nu $ is the push-forward probability measure
$$\left( \tau_g \right)_\ast \nu (E) := \nu \left(\tau_g^{-1} E \right) = \nu \left (g^{-1} E \right) \, .$$

A probability measure $\mu \in \Prob_c (G)$ determines an SDS on $M$ by
$$M \ni x \mapsto \mu \ast \delta_x = \int_G \delta_{g x } \, d \mu (g) \in \Prob _c (M) \, .$$

The associated Markov operator $\Qop_\mu \colon C_b (M) \to C_b (M)$ is given by
$$\left( \Qop_\mu \phi \right) (x) = \int_M \phi (y) \, d \mu \ast \delta_x (y) = \int_G \phi (g x) \, d \mu (g) \, .$$

Moreover, its dual operator $\Qop_\mu^\ast \colon \Prob_c (M) \to \Prob_c (M)$ is
$$\Qop_\mu^\ast \nu = \int_M \mu \ast \delta_x \, d \nu (x) = \mu \ast \nu \, .$$

Let
$$\Prob_\mu (M) := \left\{ \nu \in \Prob_c (M) \colon  \mu \ast \nu = \nu \right\} $$
be the set of $\mu$-stationary measures on $M$, that is, the fixed points of the dual Markov operator $\Qop_\mu^\ast$.

Given such a $\mu$-stationary measure $\nu$, any observable $\phi \colon M \to \R$ for which
$$\left( \Qop_\mu \phi \right) (x) = \phi (x) \quad \text{ for } \nu \text{ a.e. } x \in M$$
is called a $\nu$-stationary observable.

\medskip

Here are some relevant examples that fit the abstract concepts introduced above.

The group of cocycles $\Gscr$ has several (left) actions on $\T^d$, $\T^d\times \R^m$ and $\T^d\times\Proj$, respectively defined as follows:
\begin{align*}
\Gscr\times \T^d \to\T^d,\quad &(\alpha, A)\cdot \theta := \theta +\alpha , \\
\Gscr\times \T^d\times \R^m \to\T^d\times \R^m,\quad &(\alpha, A)\cdot (\theta,v) := (\theta +\alpha, A(\theta)  v ) , \\
\Gscr\times \T^d\times \Proj \to\T^d\times \Proj,\quad &(\alpha, A)\cdot (\theta,\hat p ) := (\theta +\alpha, A(\theta) \hat p )  .
\end{align*}

In another setting, specializing to $G = M = \T^d$ seen as an additive group, for $\alpha, \theta \in \T^d$ and $\mu \in \Prob (\T^d)$ we have
$\tau_\alpha (\theta) = \theta + \alpha$,
$$\left( \Qop_\mu \phi \right) (\theta) = \int_{\T^d} \phi (\theta + \alpha) \, d \mu (\alpha) $$
and
$$ \Qop_\mu^\ast \nu (E) =    \int_{\T^d} \left( \tau_\alpha \right)_\ast \nu (E) \, d \mu (\alpha) = 
\int_{\T^d} \nu \left( \tau_\alpha^{-1} E \right)  \, d \mu (\alpha) $$
for any Borel measurable set $E \subset \T^d$.

Note that the Haar measure $m$ is $\mu$-stationary (since it is translation invariant).

\section{Generic cocycles}\label{generic}

In this section we introduce and relate several irreducibility concepts and derive the main results of Furstenberg's theory, including a representation formula for the maximal Lyapunov exponent, Kifer's non-random filtration, a criterion for the positivity of the maximal Lyapunov exponent and its continuity  in a generic setting.

\subsection*{Furstenberg-Kifer lemma }

Fix a probability measure  $\muh\in \Prob_c(\Gscr)$, where $\Gscr$ is the group of cocycles introduced in Section~\ref{intro} and consider the action of $\Gscr$ on the compact space $\QP := \T^d\times\Proj$. Let $\Sigmah\subset \Gscr$ be a compact set containing $\supp(\muh)$
and set $M:=\Sigmah\times \QP$. The measure $\muh$ determines, via the action  of $\Gscr$ on $\QP$,  the stochastic dynamical system $K=K_{\QP}:\QP\to \Prob(\QP)$ defined by:
\begin{equation}
\label{def KQ}
 K_{(\alpha, \hat p)}:= \muh\ast \delta_{(\alpha, \hat p)}  .
\end{equation}
These objects also determine $K=K_M:M\to \Prob(M)$, another SDS,
\begin{equation}
\label{def KM}
K_{((\alpha, A), \theta, \hat p)}:= \muh\times   \delta_{(\alpha, A)\cdot(\theta, \hat p)} =  \muh\times   \delta_{(\theta+\alpha, \hat A(\theta) \hat p)}    .
\end{equation}
Denote by $\Qop_\QP$ and $\Qop_M$ the Markov operators of the SDS $K_\QP$ and $K_M$. As before we  write
$\Prob_{\muh}(\QP)$ and $\Prob_{\muh}(M)$ instead of
$\Prob_{K}(\QP)$ and $\Prob_{K}(M)$, respectively. 

\begin{proposition}
\label{QQ factor of KM}
These operators are related by the following commutative diagram
$$ \begin{CD}
C^0(M)     @>\Qop_M>> C^0(M)\\
@V\Pi VV        @VV\Pi V\\
C^0(\QP)     @>\Qop_\QP>>  C^0(\QP)
\end{CD}
$$
where $\Pi \colon C^0(M)\to C^0(\QP)$ stands for the projection
$$(\Pi \varphi)(\theta,\hat p):= \int_\Sigmah \varphi(\omega, \theta,\hat p)\, d\muh(\omega)$$
for every $\varphi\in C^0(M)$.
\end{proposition}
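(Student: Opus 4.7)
My plan is to verify the commutative diagram by a direct computation: unfold both $\Pi\circ\Qop_M$ and $\Qop_\QP\circ\Pi$ applied to an arbitrary test function $\varphi\in C^0(M)$ using the defining formulas \eqref{def KQ} and \eqref{def KM}, and then check that the two resulting iterated integrals coincide after an application of Fubini's theorem.

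First I would compute, for $\omega=(\alpha,A)\in\Sigmah$, the action of $\Qop_M$ using the product structure of $K_M$:
\begin{equation*}
(\Qop_M\varphi)(\omega,\theta,\hat p)=\int_M \varphi\, dK_{(\omega,\theta,\hat p)} = \int_\Sigmah \varphi\bigl(\omega',\theta+\alpha, \hat A(\theta)\hat p\bigr)\, d\muh(\omega'),
\end{equation*}
and then project via $\Pi$ to obtain
\begin{equation*}
(\Pi\Qop_M\varphi)(\theta,\hat p) = \int_\Sigmah\int_\Sigmah \varphi\bigl(\omega',\theta+\alpha,\hat A(\theta)\hat p\bigr)\, d\muh(\omega')\, d\muh(\omega).
\end{equation*}

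Next I would go the other way around: starting from $(\Pi\varphi)(\theta,\hat p)=\int_\Sigmah \varphi(\omega',\theta,\hat p)\, d\muh(\omega')$, the definition of $\Qop_\QP$ together with \eqref{def KQ} yields
\begin{equation*}
(\Qop_\QP\Pi\varphi)(\theta,\hat p) = \int_\Sigmah (\Pi\varphi)\bigl(\theta+\alpha,\hat A(\theta)\hat p\bigr)\, d\muh(\omega) = \int_\Sigmah\int_\Sigmah \varphi\bigl(\omega',\theta+\alpha,\hat A(\theta)\hat p\bigr)\, d\muh(\omega')\, d\muh(\omega),
\end{equation*}
where in the middle step we used that $\muh\ast\delta_{(\theta,\hat p)}$ is by definition the push-forward of $\muh$ under the action $\omega\mapsto\omega\cdot(\theta,\hat p)$.

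The two expressions are identical iterated integrals of a bounded continuous function against the compactly supported product measure $\muh\otimes\muh$ on $\Sigmah\times\Sigmah$, so Fubini's theorem gives the equality $\Pi\Qop_M=\Qop_\QP\Pi$. No genuine obstacle arises; the only minor verifications are that $\Pi$ really maps $C^0(M)$ into $C^0(\QP)$ (a consequence of continuity of $\varphi$, compactness of $\supp\muh$, and dominated convergence) and that the integrands are bounded on the compact set $\supp\muh\times\supp\muh$, both of which are routine. The content of the statement is essentially that marginalizing over the $\Sigmah$-coordinate commutes with the random action step driven by $\muh$, which is exactly the two-fold Fubini exchange carried out above.
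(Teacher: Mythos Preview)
Your proof is correct and follows essentially the same direct computation as the paper, which simply unfolds both sides into the same double integral over $\Sigmah\times\Sigmah$. A minor remark: since your two final expressions are literally identical (same integrand, same order of integration), the appeal to Fubini is not actually needed.
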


\begin{proof} Given $\varphi\in C^0(M)$ a simple calculation gives
\begin{align*}
(\Pi\circ \Qop_M \, \varphi)(\theta, \hat p) &= \int_\Sigmah \int_\Sigmah \varphi(\omega, \theta+\alpha, A(\theta)\, \hat p)\, d\muh(\omega)\, d\muh(\alpha, A) \\
&= (\Qop_\QP\circ \Pi\, \varphi)(\theta, \hat p)  .
\end{align*}
\end{proof}

\begin{proposition}
\label{nu vs muh x nu}
If $\eta\in \Prob(\QP)$ is a $K_\QP$-stationary measure then
$\muh\times \eta$ is a $K_M$-stationary measure. Conversely, given a
$K_M$-stationary measure $\etah\in \Prob(M)$, there exists
a $K_\QP$-stationary measure $\eta\in \Prob(\QP)$ such that $\etah = \muh\times \eta$.
\end{proposition}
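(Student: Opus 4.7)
The plan is to prove both directions by direct computation, exploiting the very special structure of the kernel $K_M$, namely that $K_{((\alpha,A), \theta, \hat p)}$ has $\muh$ as its first marginal regardless of the input point.

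For the direct implication, assume $\eta$ is $K_\QP$-stationary. By the definition~\eqref{def KM} and Fubini,
\[
\int_M K_{((\alpha,A),\theta,\hat p)}\, d(\muh\times\eta) = \muh \times \int_\QP \left( \int_\Gscr \delta_{(\theta+\alpha, A(\theta)\hat p)}\, d\muh(\alpha, A) \right) d\eta(\theta, \hat p),
\]
since the inner $\muh$-factor of $K$ is a constant measure independent of the integration variable. The inner double integral is exactly $\muh\ast\eta=\eta$ by the $K_\QP$-stationarity of $\eta$, so $\Qop_M^\ast(\muh\times\eta)=\muh\times\eta$.

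For the converse, this is the main content. Assume $\etah\in\Prob(M)$ satisfies $\Qop_M^\ast \etah = \etah$, i.e.
\[
\etah = \int_M K_{((\alpha,A),\theta,\hat p)}\, d\etah((\alpha,A), \theta, \hat p) = \int_M \muh\times \delta_{(\theta+\alpha, A(\theta)\hat p)}\, d\etah.
\]
The key observation is that the $\muh$-factor may be pulled outside the integral, yielding $\etah = \muh\times \eta$, where $\eta \in \Prob(\QP)$ is the push-forward of $\etah$ by the continuous map $T\colon M\to\QP$, $T((\alpha,A),\theta, \hat p):=(\theta+\alpha, A(\theta)\hat p)$. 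From the product form, the second marginal $(\pi_2)_\ast \etah$ is also $\eta$, so $(\pi_2)_\ast\etah=T_\ast\etah$; this identity and the product structure are the two facts used below.

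Finally I verify that $\eta$ is $K_\QP$-stationary. For any $\psi \in C^0(\QP)$,
\[
\int_\QP \psi\, d\eta = \int_M \psi \circ T\, d\etah = \int_M \psi(\theta+\alpha, A(\theta)\hat p)\, d\etah((\alpha,A),\theta, \hat p),
\]
and, using $\etah = \muh\times\eta$ and Fubini, this equals
\[
\int_\QP \left( \int_\Gscr \psi(\theta+\alpha, A(\theta)\hat p)\, d\muh(\alpha,A)\right) d\eta(\theta, \hat p) = \int_\QP \Qop_\QP \psi\, d\eta,
\]
so $\Qop_\QP^\ast\eta=\eta$. The only subtle point is the factoring step that yields $\etah=\muh\times\eta$; this is really just Fubini applied to the product-measure-valued integrand, but one must justify that $\eta$ so defined is a probability (which follows because $\etah$ is) and that the integral commutes with the product, which is standard since $\muh$ is fixed.
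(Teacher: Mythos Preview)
Your argument is correct and follows essentially the same route as the paper: both recognize that $\Qop_M^\ast\etah = \int \muh\times\delta_{T(x)}\,d\etah$ is automatically of the form $\muh\times\eta$, and then check that $\eta$ is $K_\QP$-stationary. The only difference is that you pull the $\muh$-factor out by a direct Fubini computation and identify $\eta = T_\ast\etah$ explicitly, whereas the paper phrases the same step as ``$\Qop_M^\ast\etah$ lies in the closed convex set $\Hscr=\{\muh\times\nu\}$'' and then deduces $\eta=\Qop_\QP^\ast\eta$ from the identity $\Qop_M^\ast(\muh\times\eta)=\muh\times\Qop_\QP^\ast\eta$; your version is slightly more constructive but the content is the same.
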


 \begin{proof}
A simple calculation gives that for any $\eta\in \Prob(\QP)$
\begin{equation}
\label{QM* and QQ*}
\Qop_M^\ast (\muh\times \eta) = \muh\times (\Qop_K^\ast \eta) .
\end{equation}
This fact follows also from Proposition~\ref{QQ factor of KM}.
Interpreting a measure $\eta$ as a linear functional
$\eta(\varphi):=\int \varphi\, d\eta$ we have $\eta \circ \Pi= \muh\times \eta$,
or equivalently $\Pi^\ast \eta= \muh\times\eta$. Hence taking adjoints, the relation
$\Pi\circ \Qop_M=\Qop_\QP\circ \Pi$  translates to
$\Qop_M^\ast \circ \Pi^\ast = \Pi^\ast \circ \Qop_\QP^\ast$ which by the previous remark implies~\eqref{QM* and QQ*}.

If $\eta$ is $K_\QP$-stationary, i.e., $\eta=\Qop_\QP^\ast \eta$,
then $\muh\times \eta= \Qop_M^\ast(\muh \times \eta)$
which implies that $\muh\times \eta$ is $K_M$-stationary.
Conversely, because the set $\Hscr=\{\muh\times\nu \colon \nu\in\Prob(\QP)\}$
is compact (and hence closed) in $\Prob(M)$, then
$$ \Qop_M^\ast  \etah = \int\underbrace{ \muh\times \delta_{(\theta+\alpha , \hat A(\theta)\hat p)} }_{\in \Hscr} \,
d\etah ((\alpha, A), \theta, \hat p) \in \Hscr $$
which proves that $\Qop_M^\ast \etah = \muh \times \eta$ for some $\eta\in \Prob(\QP)$. Hence
$$ \muh\times \eta = \Qop_M^\ast \etah = \etah  $$
and  by~\eqref{QM* and QQ*}
$$\muh\times \eta  = \etah   = \Qop_M^\ast \etah= \Qop_M^\ast (\muh\times \eta)
=\muh \times \Qop_\QP^\ast \eta $$
which implies that $\eta = \Qop_\QP^\ast \eta$, i.e., $\eta$ is $K_\QP$-stationary.
\end{proof}

\bigskip

A $K$-Markov process is any $M$-valued process
$\{ Z_n:\Omega\to M\colon n\geq 0\}$ on some probability space
$(\Omega,\FF,\Pp)$ such that for every Borel set $B\subset M$ and all $n\geq 1$,
$$ \Pp[\, Z_n\in B\, \vert \, Z_0, Z_1,\ldots, Z_{n-1}\, ] =K_{Z_{n-1}}(B) .$$
 $K$ rules the transition probabilities of the $K$-Markov processes.
Given a $K$-Markov process $\{Z_n\}_{n\geq 0}$,  the following are equivalent:
	\begin{enumerate}
		\item[(a)] The process $Z_n$ is stationary;
		\item[(b)]  The distribution of $Z_0$ on $M$
		is a $K$-stationary measure.
	\end{enumerate}

We recall the following results from H. Furstenberg anf Y. Kifer.

\begin{theorem}[Furstenberg-Kifer~\cite{FKi}{Theorem 1.1}]
	\label{Furstenberg Kifer Markov Ergodic Theorem}
	Let $\{Z_n\}_{n\geq 0}$ be a $K$-Markov process and $f\in\Cscr(M)$.
	Then with probability one,
	$$ \limsup_{n\to +\infty}  \frac{1}{n}\,\sum_{j=0}^{n-1} f(Z_j) \leq
	\sup \left\{\, \int f\, d\eta \, \colon \eta\in \Prob_{\muh}(M)   \right\} .$$
\end{theorem}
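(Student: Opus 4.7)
The plan is to analyze weak-$\ast$ accumulation points of the empirical distributions along sample paths and show they are $K$-stationary. Since $\Sigmah$ is compact and $\QP=\T^d\times\Proj$ is compact, the state space $M=\Sigmah\times\QP$ is compact metrizable; hence $\Prob(M)$ is weak-$\ast$ compact and $C^0(M)$ is separable. For each $\omega\in\Omega$ set
$$\eta_n^\omega:=\frac{1}{n}\sum_{j=0}^{n-1}\delta_{Z_j(\omega)}\in\Prob(M).$$

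The key step is to show that, with probability one, every weak-$\ast$ accumulation point of $(\eta_n^\omega)_{n\ge 1}$ belongs to $\Prob_{\muh}(M)$. Fix $g\in C^0(M)$. By the Markov property and the definition of $\Qop_M$,
$$M_n^g:=\sum_{j=0}^{n-1}\bigl[g(Z_{j+1})-(\Qop_M g)(Z_j)\bigr]$$
is a martingale with increments bounded in absolute value by $2\|g\|_\infty$. The Azuma--Hoeffding inequality gives $\Pp(|M_n^g|\ge n\varepsilon)\le 2\exp(-c_g\,n\varepsilon^2)$ for every $\varepsilon>0$, which is summable in $n$, so Borel--Cantelli forces $M_n^g/n\to 0$ almost surely. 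Since $\tfrac{1}{n}\sum_{j=0}^{n-1}g(Z_{j+1})$ differs from $\tfrac{1}{n}\sum_{j=0}^{n-1}g(Z_j)$ by at most $2\|g\|_\infty/n$, this rewrites as
$$\int g\,d\eta_n^\omega \;-\; \int \Qop_M g\,d\eta_n^\omega \;\longrightarrow\; 0\quad \Pp\text{-a.s.}$$
Picking a countable dense family $\{g_k\}\subset C^0(M)$ and intersecting the resulting full-measure events yields a single event $\Omega_0$ with $\Pp(\Omega_0)=1$ on which the above limit holds simultaneously for every $g_k$. For $\omega\in\Omega_0$, if a subsequence $\eta_{n_j}^\omega$ converges weak-$\ast$ to some $\eta$, then $\int g_k\,d\eta=\int \Qop_M g_k\,d\eta$ for every $k$, and density gives $\Qop_M^\ast\eta=\eta$, i.e.\ $\eta\in\Prob_{\muh}(M)$.

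The conclusion then follows by compactness. For $\omega\in\Omega_0$, write
$$L^\omega:=\limsup_{n\to\infty}\frac{1}{n}\sum_{j=0}^{n-1}f(Z_j(\omega))=\limsup_{n\to\infty}\int f\,d\eta_n^\omega.$$
Choose $n_j\to\infty$ with $\int f\,d\eta_{n_j}^\omega\to L^\omega$ and, using weak-$\ast$ compactness of $\Prob(M)$, extract a further convergent subsequence $\eta_{n_{j_i}}^\omega\to\eta$; by the preceding step $\eta\in\Prob_{\muh}(M)$, and continuity of $f$ yields
$$L^\omega=\int f\,d\eta\;\le\; \sup\Big\{\int f\,d\nu:\nu\in\Prob_{\muh}(M)\Big\}.$$

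The main subtlety is the order of quantifiers in the almost-sure statement: the full-measure event $\Omega_0$ must be chosen \emph{before} the accumulation point is selected, which is why one handles a countable dense family of test functions simultaneously via Azuma--Hoeffding plus Borel--Cantelli rather than one $g$ at a time. Once this uniform control is in place, weak-$\ast$ compactness of $\Prob(M)$ together with continuity of $f$ closes the argument.
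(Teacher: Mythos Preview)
The paper does not prove this theorem; it is quoted verbatim from Furstenberg--Kifer \cite{FKi}, Theorem~1.1, and used as a black box. Your argument is correct and is essentially the standard one: show that almost surely every weak-$\ast$ accumulation point of the empirical distributions $\eta_n^\omega$ is $K$-stationary, then use compactness of $\Prob(M)$. The original Furstenberg--Kifer proof proceeds in exactly this way, the only cosmetic difference being that they invoke the martingale strong law of large numbers (bounded increments suffice, since $\sum_n \EE[(M_n^g-M_{n-1}^g)^2]/n^2<\infty$) rather than Azuma--Hoeffding plus Borel--Cantelli; either route yields $M_n^g/n\to 0$ almost surely and the rest is identical.
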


\begin{theorem}[Furstenberg-Kifer~\cite{FKi}{Theorem 1.4}]
\label{Furstenberg Kifer Markov Ergodic Corollary}
Let $\{Z_n\}_{n\geq 0}$ be a $K$-Markov process and $f\in\Cscr(M)$ and assume
that for every $K$-stationary probability measure $\eta\in \Prob_{\muh}(M)$,
$\int_{M} f\, d\eta= \beta$.
Then with probability one,
$$ \lim_{n\to +\infty}  \frac{1}{n}\,\sum_{j=0}^{n-1} f(Z_j) =\beta .$$
\end{theorem}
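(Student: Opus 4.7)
The plan is to deduce this corollary as an immediate consequence of the preceding Furstenberg--Kifer theorem (Theorem~\ref{Furstenberg Kifer Markov Ergodic Theorem}), applied to the observable $f$ and to its negative $-f$. The point is that the assumption in the corollary--that $\int f\,d\eta$ takes the same value $\beta$ on every $K$-stationary measure--forces the supremum appearing on the right-hand side of Theorem~\ref{Furstenberg Kifer Markov Ergodic Theorem} to collapse to a single number, and this collapse happens for $-f$ as well.

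First, I apply Theorem~\ref{Furstenberg Kifer Markov Ergodic Theorem} to $f\in\Cscr(M)$. Since $\int_M f\,d\eta = \beta$ for every $\eta\in\Prob_{\muh}(M)$, the supremum of $\int f\,d\eta$ over $\Prob_{\muh}(M)$ equals $\beta$, and the theorem yields, with probability one,
$$\limsup_{n\to+\infty} \frac{1}{n} \sum_{j=0}^{n-1} f(Z_j) \le \beta.$$
Next, I apply the same theorem to the continuous observable $-f$. Since $\int_M (-f)\,d\eta = -\beta$ for every $K$-stationary $\eta$, the theorem gives, with probability one,
$$\limsup_{n\to+\infty} \frac{1}{n} \sum_{j=0}^{n-1} (-f)(Z_j) \le -\beta,$$
which is equivalent to
$$\liminf_{n\to+\infty} \frac{1}{n} \sum_{j=0}^{n-1} f(Z_j) \ge \beta.$$

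Intersecting the two full-probability events on which these bounds hold gives the claimed almost-sure convergence to $\beta$. There is no real obstacle beyond the preceding theorem itself: the whole argument is a $\limsup$/$\liminf$ sandwich, made possible by the fact that the $\Prob_{\muh}(M)$-constancy hypothesis applies equally well to $f$ and $-f$. All of the genuine probabilistic content--the subadditive/Krylov--Bogolyubov style upper bound on ergodic averages of a Markov process--has been absorbed into Theorem~\ref{Furstenberg Kifer Markov Ergodic Theorem}, so the corollary itself requires only this two-line deduction.
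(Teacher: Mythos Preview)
Your deduction is correct: applying Theorem~\ref{Furstenberg Kifer Markov Ergodic Theorem} to both $f$ and $-f$ and sandwiching the $\limsup$ and $\liminf$ is exactly the standard way to pass from the one-sided bound to the two-sided limit under the constancy hypothesis.

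Note, however, that the paper does not supply its own proof of this statement; it simply quotes it as~\cite[Theorem~1.4]{FKi}. So there is no ``paper's proof'' to compare against. Your argument is precisely the short deduction one finds in Furstenberg--Kifer, and it is complete as written.
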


 \bigskip

\subsection*{Furstenberg's formula}

%% Furstenberg's Formula
Consider the one and two sided Bernoulli shifts $\sigma \colon \Xplus\to \Xplus$, resp. $\sigma \colon X\to X$, on the spaces of sequences $\Xplus=\Sigmah^\N$ and $X=\Sigmah^\Z$, respectively endowed with the product probability measures
$\muh^\N$ and  $\muh^\Z$. Denote by  $\PF \colon \Xplus\times \QP \to \Xplus\times \QP$ and  $\PFh \colon X\times \QP \to X\times \QP$
the corresponding projective  maps induced by $F$,
$$\PFh(\omega, \theta, \hat p):=( \sigma\omega, \omega_0\cdot(\theta, \hat p)) = ( \sigma\omega,  \theta+\alpha_0, \hat A_0(\theta) \hat p)) $$
where $(\alpha_0,A_0)=\omega_0$ is the $0$-th coordinate of $\omega$.

\begin{proposition}
	\label{stationary vs invariant}
	Given $\eta\in \Prob(\QP)$ the following are equivalent:
	\begin{enumerate}
		\item $\muh\times \eta$ is $K_M$-stationary;
		\item $\eta$ is $K_\QP$-stationary;
		\item $\muh^\N \times \eta$ is $\PF$-invariant.
	\end{enumerate}
\end{proposition}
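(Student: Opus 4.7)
The plan is to observe that (1)$\Leftrightarrow$(2) has essentially been proved already as Proposition~\ref{nu vs muh x nu}, and then to handle (2)$\Leftrightarrow$(3) by a direct computation of the push-forward of $\muh^\N\times\eta$ under $\PF$.

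For (1)$\Leftrightarrow$(2), I would just cite Proposition~\ref{nu vs muh x nu}: its first half gives (2)$\Rightarrow$(1), and the intertwining identity $\Qop_M^\ast(\muh\times\eta)=\muh\times(\Qop_\QP^\ast \eta)$ derived in that proof immediately gives (1)$\Rightarrow$(2) as well, since if $\muh\times\eta=\Qop_M^\ast(\muh\times\eta)=\muh\times(\Qop_\QP^\ast\eta)$, then the marginals in the $\QP$-coordinate must agree, i.e., $\eta=\Qop_\QP^\ast\eta$.

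For (2)$\Leftrightarrow$(3), the key step is to compute $\PF_\ast(\muh^\N\times\eta)$ explicitly. Writing $\om=(\om_0,\om')$ with $\om'=\sigma\om$, the product structure $\muh^\N=\muh\times\muh^\N$ together with Fubini gives, for any $\varphi\in C^0(\Xplus\times\QP)$,
\[
\int\varphi\, d\,\PF_\ast(\muh^\N\times\eta)=\int_\Sigmah\!\int_{\Xplus}\!\int_\QP \varphi\bigl(\om',\om_0\cdot(\theta,\hat p)\bigr)\, d\eta(\theta,\hat p)\, d\muh^\N(\om')\, d\muh(\om_0).
\]
The inner double integral over $\om_0$ and $(\theta,\hat p)$ is precisely the pairing of $\varphi(\om',\cdot)$ against $\muh\ast\eta=\Qop_\QP^\ast\eta$, so the right-hand side equals $\int \varphi\, d\bigl(\muh^\N\times(\Qop_\QP^\ast\eta)\bigr)$. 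Thus
\[
\PF_\ast(\muh^\N\times\eta)=\muh^\N\times(\Qop_\QP^\ast\eta),
\]
which shows that $\muh^\N\times\eta$ is $\PF$-invariant if and only if $\Qop_\QP^\ast\eta=\eta$, i.e., if and only if $\eta$ is $K_\QP$-stationary.

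There is no serious obstacle here: the argument is essentially a Fubini manipulation using the Bernoulli product structure, together with the definition of the convolution action $\muh\ast\eta$. The only point that requires care is keeping track of the two copies of $\Sigmah$ — the one indexed by the $0$-th coordinate $\om_0$, which acts on $\QP$, and the shifted sequence $\om'$, against which the remaining $\muh^\N$ factor is integrated.
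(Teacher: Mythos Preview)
Your proposal is correct and follows essentially the same route as the paper: for (1)$\Leftrightarrow$(2) the paper simply cites Proposition~\ref{nu vs muh x nu}, and for the remaining equivalence the paper defers to~\cite[Proposition 5.5]{Viana-book}, whose content is precisely the Fubini computation you wrote out, yielding $\PF_\ast(\muh^\N\times\eta)=\muh^\N\times(\Qop_\QP^\ast\eta)$. The only cosmetic difference is that the paper phrases it as (1)$\Leftrightarrow$(3) via the reference, while you prove (2)$\Leftrightarrow$(3) directly; once (1)$\Leftrightarrow$(2) is in hand these are the same.
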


\begin{proof}
The equivalence  $(1)\Leftrightarrow(2)$  was proved in Proposition~\ref{nu vs muh x nu}. See~\cite[Proposition 5.5]{Viana-book} for the proof of the equivalence $(1)\Leftrightarrow(3)$.
\end{proof}

Recall that $\alfa:\Gscr\to \T^d$ is the projection
$\alfa(\beta, B):=\beta$. Define also
$\pi:\QP\to\T^d$, $\pi(\theta, \hat p):=\theta$.

\begin{proposition}
Given $\muh\in\Prob_c(\Gscr)$ such that $\mu=\alfa_\ast \muh$ is ergodic, if $\eta\in \Prob_{\muh}(\QP)$ then $\pi_\ast\eta=m$.
\end{proposition}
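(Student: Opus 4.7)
The plan is to reduce the statement to the uniqueness of stationary measures on $\T^d$ for the random walk driven by $\mu$, and then exploit the ergodicity hypothesis via Fourier analysis.

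First, I would exploit the equivariance of $\pi$. The projection $\pi \colon \QP \to \T^d$ intertwines the $\Gscr$-action on $\QP$ with the translation action of $\T^d$ on itself through the homomorphism $\alfa$: for every $(\alpha, A) \in \Gscr$ and $(\theta, \hat p) \in \QP$,
$$\pi\bigl((\alpha, A)\cdot(\theta,\hat p)\bigr) = \theta + \alpha = \tau_{\alfa(\alpha, A)}\bigl(\pi(\theta,\hat p)\bigr).$$
Pushing forward the stationarity equation $\muh \ast \eta = \eta$ by $\pi$ and using this equivariance yields $\mu \ast (\pi_\ast \eta) = \pi_\ast \eta$. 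Thus $\pi_\ast \eta$ is a $\mu$-stationary probability measure on $\T^d$, so the proposition reduces to showing that $m$ is the unique such measure.

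Next, I would prove this uniqueness by harmonic analysis. Since $m$ is translation invariant, it is automatically $\mu$-stationary, so what remains is uniqueness. For any $\nu \in \Prob_\mu(\T^d)$, the Fourier coefficients
$$\hat\nu(k) = \int_{\T^d} e^{2\pi i \langle k, \theta\rangle}\, d\nu(\theta), \qquad k \in \Z^d,$$
satisfy $\hat\nu(k) = \hat\mu(k)\,\hat\nu(k)$ because $\nu = \mu \ast \nu$. For $k \neq 0$, the equality $\hat\mu(k) = 1$ is equivalent to $\langle k, \alpha\rangle \in \Z$ for $\mu$-a.e.\ $\alpha$, which in turn forces $\supp(\mu) \subset \{\alpha \colon \langle k, \alpha\rangle \in \Z\}$. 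By the ergodicity of $\mu$ recalled in the introduction, for every $k \in \Z^d \setminus \{0\}$ there exists $\alpha \in \supp(\mu)$ with $\langle k, \alpha\rangle \notin \Z$, so $\hat\mu(k) \neq 1$ and hence $\hat\nu(k) = 0$. Therefore $\nu = m$, and in particular $\pi_\ast \eta = m$.

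The only subtle point is the second step, specifically the invocation of the Fourier characterization of ergodicity of $\mu$ from \cite{CDK-paper1}: it is really this characterization (rather than ergodicity of the skew-product $f$ as a dynamical statement) that forces $\hat\mu(k) \neq 1$ for every nonzero $k$. Once this is available the argument is straightforward, and the first step is essentially formal equivariance.
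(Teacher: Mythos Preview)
Your proof is correct and follows essentially the same route as the paper's: both use the $\Gscr$-equivariance of $\pi$ (intertwining with $\alfa$) to push the stationarity equation forward and conclude that $\pi_\ast\eta$ is $\mu$-stationary on $\T^d$, then invoke the fact that ergodicity of $\mu$ forces $\Prob_\mu(\T^d)=\{m\}$. The only difference is that the paper cites this last fact directly from~\cite[Theorem~2.3, item~(7)]{CDK-paper1}, whereas you unpack it via the Fourier characterisation of ergodicity (another item of that same theorem); your version is thus slightly more self-contained but not a genuinely different argument.
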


\begin{proof}
The group $\Gscr$ acts on both $\QP$ and $\T^d$. Notice the action on $\T^d$ is given by $(\beta, B)\cdot \theta := \alfa(\beta, B)\cdot \theta=\theta+\beta$. The projection map $\pi:\QP\to \T^d$ is $\Gscr$-equivariant. Moreover, for all $(\beta, B)\in\Gscr$ and $(\theta, \hat p)\in \QP$,
$$ \alfa(\beta, B)\cdot \pi(\theta,\hat p) = \pi((\beta, B)\cdot (\theta, \hat p)) . $$
Hence, setting $\mu:=\alfa_\ast\muh$ and taking $(\beta, B)$ with distribution $\muh$, and $(\theta,\hat p)$ with distribution $\eta$ we get that
\begin{align*}
\mu \ast (\pi_\ast\eta)&= (\alfa_\ast\muh)\ast \pi_\ast\eta  = \pi_\ast( \muh\ast \eta) =\pi_\ast \eta .
\end{align*}
Therefore, since $\mu$ is ergodic, by~\cite[Theorem 2.3 item (7)]{CDK-paper1} we have that $\pi_\ast \eta\in \Prob_{\mu}(\T^d)=\{m\}$, which finishes the proof.
\end{proof}

Using Rokhlin disintegration theorem (see for instance~\cite[Theorem 5.1.11]{Viana-Oliveira-ET-en}) we can talk about the disintegration of the measure $\eta$ with respect to $\pi:\QP\to \T^d$, which will be denoted by $\{\eta_\theta\}_{\theta\in\T^d}$. By the uniqueness of Rokhlin disintegration,  the disintegrated measures  $\eta_\theta$ are well defined for $m$ almost every $\theta$.

\begin{proposition}
\label{invariance disintegration}
Given $\eta\in \Prob(\QP)$ such that $\pi_\ast \eta=m$, the measure
$\eta$ is $\muh$-stationary if and only if its disintegration $\{\eta_\theta\}_{\theta\in\T^d}$ satisfies
\begin{equation*}
\eta_\theta = \int B(\theta-\beta)_\ast \eta_{\theta-\beta}\, d\muh(\beta, B) \quad \text{ for m-a.e. } \theta\in\T^d .
\end{equation*}
\end{proposition}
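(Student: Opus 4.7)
The plan is to compute the Rokhlin disintegration of $\muh\ast\eta$ along $\pi$ and compare it with that of $\eta$, invoking the uniqueness part of Rokhlin's theorem. The stationarity equation $\eta=\muh\ast\eta$ then reduces, fiber by fiber, to the claimed equality between conditional measures.

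First I would check that both sides of the stationarity equation project to $m$ under $\pi$, so that they admit disintegrations over the common base $(\T^d,m)$. Using the $\Gscr$-equivariance of $\pi$ established in the preceding proposition,
$$\pi_\ast(\muh\ast\eta) \;=\; (\alfa_\ast\muh)\ast \pi_\ast\eta \;=\; \mu\ast m \;=\; m,$$
by translation invariance of the Haar measure on $\T^d$.

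Next, for each fixed $(\beta,B)\in\Gscr$, I would compute the disintegration of the push-forward $(\tau_{(\beta,B)})_\ast\eta$ along $\pi$. For any test function $\varphi\in C^0(\QP)$, unfolding the definition of the push-forward and the disintegration of $\eta$, and then changing variables $\theta\mapsto\theta-\beta$ (again using translation invariance of $m$), one obtains
$$\int_\QP\varphi\, d(\tau_{(\beta,B)})_\ast\eta \;=\; \int_{\T^d}\int_\Proj \varphi(\theta, B(\theta-\beta)\hat p)\, d\eta_{\theta-\beta}(\hat p)\, dm(\theta).$$
By uniqueness of Rokhlin disintegration, the conditional of $(\tau_{(\beta,B)})_\ast\eta$ at $\theta$ is therefore $B(\theta-\beta)_\ast\eta_{\theta-\beta}$. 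Integrating against $\muh(\beta,B)$ and applying Fubini to exchange the orders of integration over $\Gscr$ and $\T^d$ then produces
$$(\muh\ast\eta)_\theta \;=\; \int_\Gscr B(\theta-\beta)_\ast\eta_{\theta-\beta}\, d\muh(\beta,B)\qquad\text{for $m$-a.e.\ }\theta.$$

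With this identification in hand, the conclusion is immediate: $\eta$ is $\muh$-stationary iff $\eta=\muh\ast\eta$, which by uniqueness of Rokhlin disintegration (both sides projecting to the common base measure $m$) is equivalent to $\eta_\theta = (\muh\ast\eta)_\theta$ for $m$-a.e.\ $\theta$, i.e.\ to the displayed formula. The main technical obstacle will be the Fubini step: one needs the joint Borel measurability of $((\beta,B),\theta)\mapsto B(\theta-\beta)_\ast\eta_{\theta-\beta}$ as a $\Prob(\Proj)$-valued map, which follows from the joint continuity of the $\Gscr$-action on $\QP$ together with the Borel measurability of $\theta\mapsto\eta_\theta$ from Rokhlin's theorem, but requires some care since $\Gscr$ is only Polish and not locally compact.
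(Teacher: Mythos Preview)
Your proposal is correct and is precisely the ``straightforward verification'' the paper alludes to in lieu of a proof: you unfold $\muh\ast\eta$ via the disintegration of $\eta$, change variables using the translation invariance of $m$, and invoke uniqueness of the Rokhlin disintegration. The measurability concern you flag is handled by the standing assumption that $\theta\mapsto\eta_\theta$ is Borel and the continuity of the action; no further idea is needed.
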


\begin{proof}
Straightforward verification.
\end{proof}

\begin{definition}
	Given $\eta\in \Prob_{\muh}(\QP)$, a measurable function $\varphi\in L^\infty(\QP)$ is said to be $\eta$-stationary if \, $\Qop_\QP\varphi=\varphi$\, $\eta$-a.e.. Likewise, a Borel set $A\subset \QP$
	is said to be $\eta$-stationary if its indicator function $\ind_A$ is
	$\eta$-stationary.
\end{definition}

\begin{definition}
	A stationary measure $\eta\in \Prob_{\muh}(\QP)$ is called  ergodic if
	for every $\eta$-stationary Borel set $A\subset \QP$, either $\eta(A)=0$ or  $\eta(A)=1$.
\end{definition}

\begin{proposition}
\label{extremal vs ergodic}
Given $\eta\in \Prob_{\muh}(\QP)$ the following are equivalent:
\begin{enumerate}
	\item $\eta$ is an extremal point of $\Prob_{\muh}(\QP)$;
	%\item $\muh\times \eta$ is an extremal point of $\Prob_{\muh}(M)$;
	\item  $\eta$ is an ergodic stationary measure;
	\item The map $(\PF, \muh^\N \times \eta)$ is ergodic.
\end{enumerate}
\end{proposition}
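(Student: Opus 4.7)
The plan is to exploit the bijective correspondence $\eta \leftrightarrow \muh^\N\times\eta$ between $K_\QP$-stationary measures on $\QP$ and $\PF$-invariant product probability measures on $\Xplus\times\QP$, established in Proposition~\ref{stationary vs invariant}. This correspondence will transfer both the convex (extremal) and the ergodic structure between the two settings, yielding all three equivalences.

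For $(1)\Leftrightarrow (3)$, I would first note that the assignment $\eta\mapsto \muh^\N\times\eta$ is affine, so its image is a face of $\Prob_{\PF}(\Xplus\times \QP)$ and extremal points correspond. What is needed to finish is the classical fact that \emph{every} $\PF$-invariant probability on $\Xplus\times\QP$ whose marginal on $\Xplus$ equals $\muh^\N$ automatically has product form $\muh^\N\times\nu$ with $\nu\in\Prob_{\muh}(\QP)$; this is a standard consequence of the Markov property of $\PF$ and is proved in~\cite[Ch.~5]{Viana-book}. Granted this, $\muh^\N\times\eta$ is extremal among all $\PF$-invariant measures precisely when $\eta$ is extremal in $\Prob_{\muh}(\QP)$, and extremality of an invariant measure is synonymous with ergodicity of the system it defines.

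For $(2)\Leftrightarrow(3)$, I would set up a bijection (mod null sets) between $\PF$-invariant sets $B\subset \Xplus\times\QP$ and $\eta$-stationary sets $A\subset \QP$. In one direction, given a $\PF$-invariant $B$, define
\[
\varphi(\theta,\hat p):=\int_{\Xplus} \ind_B(\om,\theta,\hat p)\,d\muh^\N(\om).
\]
Invariance of $B$ under $\PF$ together with the product form of $\muh^\N\times\eta$ and the Markov property implies $\Qop_\QP\varphi=\varphi$ $\eta$-a.e., and a backward-martingale argument (applying Theorem~\ref{Furstenberg Kifer Markov Ergodic Corollary} or a direct martingale convergence, exactly as in~\cite[Prop.~5.9]{Viana-book}) forces $\varphi\in\{0,1\}$ $\eta$-a.e., producing an $\eta$-stationary set $A$. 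Conversely, starting from an $\eta$-stationary $A$, one promotes $\Xplus\times A$ to a genuinely $\PF$-invariant set by replacing it with $\{(\om,z):\ind_A(\PF^n(\om,z))=1\text{ for all large }n\}$, whose $\muh^\N\times\eta$-measure equals $\eta(A)$ by the martingale identification. The two constructions are inverse up to $\muh^\N\times\eta$-null sets, so one side has only trivial invariant sets iff the other does.

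The step I expect to be the main obstacle is verifying that the function $\varphi$ above really is $\{0,1\}$-valued $\eta$-a.e., i.e.\ that every bounded $\eta$-stationary observable comes from a stationary set when the chain is ergodic. This is where one cannot avoid the martingale / Furstenberg–Kifer input, and I would handle it by invoking Theorem~\ref{Furstenberg Kifer Markov Ergodic Corollary} together with~\cite[Propositions 5.9 and 5.12]{Viana-book} rather than re-deriving these facts, matching the level of citation already used to justify Proposition~\ref{stationary vs invariant}. All remaining verifications (affineness of $\eta\mapsto \muh^\N\times\eta$, the identity $\Qop_\QP\varphi=\varphi$, etc.) are routine.
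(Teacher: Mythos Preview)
Your proposal is correct, but it is organized differently from the paper's proof and leans on one extra structural input.

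The paper argues as follows: it cites~\cite[Proposition 5.13]{Viana-book} for $(2)\Leftrightarrow(3)$; it proves $(3)\Rightarrow(1)$ exactly as in the easy direction of your $(1)\Leftrightarrow(3)$ (if $\eta=t\eta_1+(1-t)\eta_2$ then $\muh^\N\times\eta$ splits the same way); and it proves $(1)\Rightarrow(2)$ \emph{directly}, by taking a non-trivial $\eta$-stationary set $A$, forming the conditional measures $\eta_A(E)=\eta(A\cap E)/\eta(A)$ and $\eta_{A^c}$, and checking by hand that both are $K_\QP$-stationary, so that $\eta=\eta(A)\eta_A+(1-\eta(A))\eta_{A^c}$ is a genuine convex decomposition. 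This last step is short and completely self-contained.

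Your route to $(1)\Leftrightarrow(3)$ instead passes through the claim that every $\PF$-invariant probability on $\Xplus\times\QP$ with $\Xplus$-marginal $\muh^\N$ is of the form $\muh^\N\times\nu$ with $\nu$ stationary. This is true for one-sided skew products and is indeed available from the references you cite (together with the Bernoulli ergodicity of $\muh^\N$, which you should mention, since it is what makes the image of $\eta\mapsto\muh^\N\times\eta$ a \emph{face} rather than merely a convex subset). So your argument is sound, but it imports a stronger structural fact than the paper needs: the paper's conditional-measure computation for $(1)\Rightarrow(2)$ bypasses this entirely. Conversely, for $(2)\Leftrightarrow(3)$ you sketch more than the paper does, since the paper is content to cite Viana outright; your invariant-set/stationary-set correspondence is a legitimate way to unpack that citation.
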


\begin{proof}
See the proof of the equivalence $(2)\Leftrightarrow(3)$ in~\cite[Proposition 5.13]{Viana-book}.

\smallskip

\noindent
We prove now that $(1)\, \Rightarrow\, (2)$.
If (2) does not hold there exists a $\eta$-invariant Borel  set $A\subset \QP$  with $0<\eta(A)<1$.
Define the measure $\eta_A\in\Prob(\QP)$,
$$ \eta_A(E):=\frac{1}{\eta(A)}\eta(A\cap E) \qquad \forall    E\in\B(\QP) .$$
To simplify notations we introduce the variables $x=(\theta,\hat p)\in \QP$ and
$\omega=(\alpha, A) \in \Sigmah$. With this notation it follows from the
identity
$$ \ind_A(x) = \int \ind_A(\omega\cdot x)\, d\muh(\omega)  \quad \text{ for } \eta\text{-a.e. } x\in \QP $$
and because the indicator function $\ind_A(x)$ can only take the values $0$ and $1$, that for $\eta$-a.e.  $x\in \QP$,
\begin{enumerate}
	\item  $x\in A$\; $\Rightarrow$ \;
	$\omega\cdot x\in A$\;  for $\muh$-a.e. $\omega\in \Sigmah$,
	
	\item $x\notin A$\; $\Rightarrow$ \;
	$\omega\cdot x\notin A$ \; for $\muh$-a.e. $\omega\in \Sigmah$.
\end{enumerate}
Thus $\Qop=\Qop_{\QP}$ satisfies for any given $\varphi\in C^0(\QP)$,
\begin{align*}
\int_{ \QP} \varphi\, d(\Qop^\ast \eta_A) &=
\int_{ \QP} (\Qop\varphi)\, d\eta_A =
\frac{1}{\eta(A)}\int_\QP\int_\Sigmah \varphi(\omega\cdot x)\,\ind_A(x)\, d\muh(\omega)\, d\eta(x)\\
&=
\frac{1}{\eta(A)}\int_ \QP \int_\Sigmah \varphi(\omega\cdot x)\,\ind_A(\omega\cdot x)\, d\muh(\omega)\, d\eta(x)\\
&=
\frac{1}{\eta(A)} \int_\QP  \Qop (\varphi \,\ind_A)\, d\eta = \frac{1}{\eta(A)} \int_\QP  \varphi \,\ind_A\, d(\Qop^\ast \eta)\\
&= \frac{1}{\eta(A)} \int_\QP \varphi \,\ind_A\, d\eta
= \int_\QP \varphi\, d\eta_A
\end{align*}
which proves that $\eta_A$ is $K$-stationary.
The complement set $B= \QP\setminus A$ is also $\eta$-stationary with $\eta(B)=1-\eta(A)\in ]0,1[$. Hence, by the same argument, $\eta_B$ is $K$-stationary. Therefore
$$ \eta=\eta(A)\, \eta_A + (1-\eta(A))\, \eta_B $$
is not an extremal point of $\Prob_{\muh}(\QP)$.

\smallskip

\noindent
Finally we prove that $(3)\, \Rightarrow\, (1)$.
If $\eta$ is not extremal then
$\eta= t\, \eta_1 + (1-t) \, \eta_2$ with
$\eta_1,\eta_2\in\Prob_{\muh}(\QP)$, $\eta_1\neq \eta_2$  and $0<t<1$.
This implies that
$$ \muh^\N\times\eta= t\, (\muh^\N\times\eta_1) + (1-t) \, (\muh^\N\times\eta_2) . $$
Since by Proposition~\ref{stationary vs invariant}
the probability measures $\muh^\N\times\eta_i$ are $\PF$-invariant, it follows that
$(\PF, \muh^\N\times \eta)$ is not ergodic.
\end{proof}

\bigskip

Consider the observable $\Psi \in C^0(M)$
defined by
\begin{equation}
\label{def Psi}
	 \Psi((\alpha,A),\theta, \hat p) = 	 \Psi_{ (\alpha,A)}(\theta, \hat p) := \log \norm{A(\theta) p} ,
\end{equation}
where $p$ is any unit vector representing the projective point $\hat p$,
and the continuous linear functional
$\alpha_{\muh}:\Prob_{\muh}(\QP)\to \R$ defined by
\begin{equation}
\label{def alpha muh}
 \alpha_{\muh}(\eta):= \int_\Sigmah \int_{\QP} \Psi_\omega(\theta, \hat p)\, d\eta(\theta, \hat p)\, d\muh(\omega).
\end{equation}
Because the convex space $\Prob_{\muh}(\QP)$ is weak-$\ast$ compact and $\alpha_{\muh}$ is continuous w.r.t. this topology we can define
$$ \beta(\muh):= \max\{ \alpha_\muh(\eta)\colon \eta \in \Prob_\mu(\QP)\, \} .$$

\smallskip

A simple adaptation of the argument~\cite[Theorem 2.1]{FKi}
establishes the following Furstenberg type formula.

\begin{theorem}
\label{Furstenberg-Kifer}
	Given $\muh\in\Prob_c(\Gscr)$ ergodic the following hold.
	
	\begin{enumerate}
		\item[(1)] \; $\displaystyle \lim_{n\to +\infty} \frac{1}{n} \log \norm{\Ascr^n(\omega)(\theta)} =\beta(\muh)$ \, for $m$-a.e.  $\theta\in\T^d$ and  $\muh^\N$-a.e. $\omega\in\Sigmah^\N$.
In particular, $\fle(\muh)=\beta(\muh)$.		
		\item[(2)] \; $\displaystyle \limsup_{n\to +\infty} \,  \frac{1}{n}\log \norm{\Ascr^n(\omega)(\theta)\, v} \leq \beta(\muh)$, for all $(\theta,v) \in \T^d\times\R^m$ and  $\muh^\N$-a.e. $\omega\in\Sigmah^\N$.
		\item[(3)] Assuming that the functional $\alpha_\muh$ is constant, we have \\
		\; $\displaystyle \lim_{n\to +\infty} \,  \frac{1}{n}\log \norm{\Ascr^n(\omega)(\theta)\, v} = \beta(\muh)$ for all $(\theta,v) \in \T^d\times \R^m\setminus\{0\}$ and $\mu^\N$-a.e. $\omega\in\Sigmah^\N$.	
	\end{enumerate}
\end{theorem}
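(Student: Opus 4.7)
The plan is to apply the Furstenberg--Kifer Markov ergodic theorems (Theorems~\ref{Furstenberg Kifer Markov Ergodic Theorem} and~\ref{Furstenberg Kifer Markov Ergodic Corollary}) to the $K_M$-Markov process on $M=\Sigmah\times\QP$ defined as follows: given $(\theta,\hat p)\in\QP$, take iid $\omega_0,\omega_1,\ldots\sim\muh$ and set $Z_j:=(\omega_j,x_j)$ with $x_0:=(\theta,\hat p)$ and $x_{j+1}:=\omega_j\cdot x_j$. With the observable $\Psi$ of~\eqref{def Psi}, the telescoping identity
$$\sum_{j=0}^{n-1}\Psi(Z_j)\;=\;\log\norm{\Ascr^n(\omega)(\theta)\,p}$$
follows from $p_{j+1}=A_j(\theta_j)\,p_j/\norm{A_j(\theta_j)\,p_j}$, so that consecutive norms collapse inside the product $A_{n-1}(\theta_{n-1})\cdots A_0(\theta_0)p_0$. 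Combined with Proposition~\ref{nu vs muh x nu}, which identifies $\Prob_{\muh}(M)=\{\muh\times\eta:\eta\in\Prob_{\muh}(\QP)\}$ and gives $\int\Psi\,d(\muh\times\eta)=\alpha_\muh(\eta)$, Theorem~\ref{Furstenberg Kifer Markov Ergodic Theorem} delivers
$$\limsup_{n\to\infty}\frac{1}{n}\log\norm{\Ascr^n(\omega)(\theta)\,p}\;\leq\;\sup_{\eta}\alpha_\muh(\eta)\;=\;\beta(\muh)$$
for $\muh^\N$-a.e.\ $\omega$, which is (2) after writing a general $v\neq 0$ as $\norm{v}\,\hat v$.

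For (1), Kingman's subadditive ergodic theorem on $(X\times\T^d,f,\muh^\Z\times m)$ gives $\frac{1}{n}\log\norm{\Ascr^n(\omega)(\theta)}\to L_1(\muh)$ almost everywhere, so it suffices to show $L_1(\muh)=\beta(\muh)$. The upper bound $L_1(\muh)\leq\beta(\muh)$ is immediate from (2) applied to the standard basis vectors together with $\norm{\Ascr^n(\omega)(\theta)}\leq\sqrt{m}\,\max_i\norm{\Ascr^n(\omega)(\theta)\,e_i}$. For the reverse inequality, Bauer's maximum principle on the compact convex set $\Prob_\muh(\QP)$ furnishes an extreme point $\eta^*$ with $\alpha_\muh(\eta^*)=\beta(\muh)$; by Proposition~\ref{extremal vs ergodic} the system $(\PF,\muh^\N\times\eta^*)$ is ergodic, and Birkhoff's theorem together with the telescoping identity produces a full $(\muh^\N\times\eta^*)$-measure set on which $\frac{1}{n}\log\norm{\Ascr^n(\omega)(\theta)\,p}\to\beta(\muh)$. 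Since $\pi_*\eta^*=m$ by the $\Gscr$-equivariance of $\pi:\QP\to\T^d$ and the ergodicity of $\mu=\alfa_*\muh$, this set meets in full measure the Kingman set where $L_1(\muh)$ exists, yielding $\beta(\muh)\leq L_1(\muh)$ via $\norm{\Ascr^n p}\leq\norm{\Ascr^n}$.

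Finally, (3) follows from Theorem~\ref{Furstenberg Kifer Markov Ergodic Corollary}: the assumption that $\alpha_\muh$ is constant on $\Prob_\muh(\QP)$ becomes, via the bijection of Proposition~\ref{nu vs muh x nu}, precisely the hypothesis of that corollary for the observable $\Psi$; it then yields $\frac{1}{n}\sum\Psi(Z_j)\to\beta(\muh)$ almost surely for each fixed starting $(\theta,\hat p)$, which via the telescoping identity is the desired claim, and the extension to $v\neq 0$ is trivial. The main obstacle is the bookkeeping needed to match the abstract Markov-process formalism of Section~\ref{generalities} with the projective fiber dynamics of the mixed cocycle---in particular, the verification of the telescoping identity and the correct setup of the initial distribution of the Markov process---after which all three parts reduce to routine applications of the quoted Furstenberg--Kifer theorems.
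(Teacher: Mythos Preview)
Your proof is correct and follows essentially the same approach as the paper: the same Markov process $Z_j$ with the observable $\Psi$ and the telescoping identity, Theorem~\ref{Furstenberg Kifer Markov Ergodic Theorem} for (2), Theorem~\ref{Furstenberg Kifer Markov Ergodic Corollary} for (3), and for (1) the combination of Furstenberg--Kesten/Kingman with an extremal maximizing stationary measure $\eta^\ast$ (the paper obtains it via Krein--Milman on the maximizing face, you via Bauer's principle) together with Birkhoff on the ergodic system $(\PF,\muh^\N\times\eta^\ast)$. Your explicit invocation of $\pi_\ast\eta^\ast=m$ to match the Birkhoff and Kingman full-measure sets makes precise a step the paper leaves implicit.
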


\begin{proof}
	Let $K:M\to\Prob(M)$ be the SDS~\eqref{def KM}.
	For each $x=(\theta,\hat p)\in \T^d\times \Proj$,
	consider the random process $Z_n^x:\Xplus\to M$, $n\geq 0$, defined by
	$$
\begin{aligned}
Z_n^{x}(\omega):= &\left(  (\sigma^n \omega)_0, \theta+\tau^n(\omega),\hat \Ascr^n(\omega)(\theta)\hat p \right)\\
 = &\left(  \omega_n, \theta+\tau^n(\omega),\hat \Ascr^n(\omega)(\theta)\hat p \right).
\end{aligned}
$$
	
	The process $Z_n^{x}$ is a (non-stationary) $K$-Markov process on $(\Xplus,\Pp_{x})$ for a unique  measure  $\Pp_{x}\in\Prob(\Xplus)$ such that $\Pp_{x}[Z^x_0=x]= 1$,  the Kolomogorov extension determined by $(K,\delta_x)$.

	Next consider the continuous observable $\Psi\in\Cscr(M)$ defined in~\eqref{def Psi}.

	Notice that for any
	$x=(\theta, \hat p)$,	we have
	\begin{equation}
	\label{Psi and Ln(A)}
	 \frac{1}{n}\, \log \norm{\Ascr^n(\omega)(\theta) p} =  \frac{1}{n} \sum_{j=0}^{n-1} \Psi(Z_j^{x})  .
	\end{equation}
	Moreover, for any probability $\muh\times\eta\in\Prob_{\muh}(M)$ we have
	\begin{equation}
	\label{Psi average L(A)}
	\alpha_{\muh}(\eta) = \int \Psi\, d(\muh\times\eta) .
	\end{equation}
	Thus item (2) follows from Theorem~\ref{Furstenberg Kifer Markov Ergodic Theorem} applied to the SDS $K$ and the observable $f=\Psi$.
	
	Similarly, item (3) follows by Theorem~\ref{Furstenberg Kifer Markov Ergodic Corollary}.
	
	Let us now prove (1). By Furstenberg-Kesten's Theorem~\cite{FK60}
	the following limit exists for  $(\muh^\N\times m)$-a.e. $(\omega,\theta)\in\Sigmah^\N\times\T^d$
	$$ \lim_{n\to +\infty} \log \norm{\Ascr^n(\omega)(\theta)} =\fle(\muh)
	.$$

	Fixing a basis $\{e_1,\ldots, e_m\}$ of $\R^m$, define the matrix norm
	$$ \norm{g}' = \max_{1\leq i\leq m} \norm{g \,e_i} .$$
	By norm equivalence there exists $0<c<\infty$ such that
	$$  \frac{1}{c}\leq \frac{\norm{g}}{\norm{g}'}\leq c \quad \forall g\in\SL_m (\R). $$
	The set of maximizing measures
	$$\Mscr:=\{\eta\in \Prob_{\muh}(\QP)\colon \alpha_\muh(\eta)=\beta(\muh)\}$$
	is a non-empty compact convex set. By  Krein-Milman Theorem there exists an extremal point $\eta$ of $\Mscr$ and since $\Mscr$ is an extremal level set of the linear functional $\alpha_\muh$,
	this measure $\eta$ is also an extremal point of $\Prob_{\muh}(\QP)$.
	By Proposition~\ref {extremal vs ergodic},  $(\PF,\muh^\N\times\eta)$ is ergodic.
	Thus by~\eqref{Psi and Ln(A)},~\eqref{Psi average L(A)} and Birkhoff Ergodic Theorem, for $(\muh^\N\times \eta)$-a.e. $(\omega,\theta,\hat p) \in \Sigmah^\N\times \QP$,
	\begin{align*}
	\beta(\muh)=\alpha_\muh(\eta)& = \int_{M} \Psi \, d(\muh\times\eta) =   \int_{\Sigmah^\N\times \QP}\tilde \Psi \, d(\muh^\N\times\eta)\\
	&=\lim_{n\to +\infty} \frac{1}{n}\,\sum_{j=0}^{n-1} \tilde \Psi(\PF^j(\omega, \hat p)) = \lim_{n\to +\infty} \frac{1}{n}\,  \log \norm{\Ascr^n(\omega)(\theta)\, p}\\
	&\leq \lim_{n\to +\infty}  \frac{1}{n}\,  \log \norm{\Ascr^n(\omega)(\theta)}  = \lim_{n\to +\infty} \frac{1}{n}\,  \log \norm{\Ascr^n(\omega)(\theta)}'\\
	&= \max_{1\leq i\leq m}
	\lim_{n\to +\infty} \frac{1}{n}\,  \log \norm{\Ascr^n(\omega)(\theta)\, e_i} \leq  \beta(\muh) .
	\end{align*}
	
Here $\tilde \Psi := \Psi \circ \pi$ where $\pi: \Sigma^\N\times \QP \to \Sigma\times\QP$. This proves (1).
\end{proof}

%To relate $F$  with  $\Qop_\mu$
%we  use the notation\,
%$\EE_x[\,\varphi\, ]:= \int_X \varphi(\omega, x)\,d\mu^\Z(\omega)$.
%For every $n\in\N$ and $x\in M$ we have
%\begin{equation}
%(\Qop_\mu^n \varphi)(x) = \EE_x[	\,\varphi\circ F^n \,] .
%\end{equation}

\bigskip

\subsection*{ Kifer non-random filtration}

\begin{definition} Given $0\leq k \leq m$,
	we call $k$-dimensional measurable subbundle of $\T^d\times \R^m$
	any measurable subset $\Lscr\subset \T^d\times \R^m$ such that
	  $\Lscr_\theta:=\{ v\in \R^m \colon (\theta,v)\in \Lscr\}$ is a vector subspace with dimension  $k$ for  every $\theta\in\T^d$.
	
	The measurable subbundle $\Lscr$ is called $\muh$-invariant if
    $$B\,\Lscr_\theta = \Lscr_{\theta+\beta}$$
    for every $\muh$-almost every  $(\beta,B)\in \Gscr$ and $m$-almost every $\theta\in\T^d$. We express this invariance writing
    $(\beta, B)\Lscr=\Lscr$ for $\muh$-a.e. $(\beta,B)$.
\end{definition}

%\begin{proposition}
%If $L\subset \QP$ is a $(\mu,k)$-invariant subspace
%then
%$$\tilde L:= \{ (\theta,v)\in\T^d\times\R^m \colon
%(\theta,\hat v)\in L \} $$
%is an $F$-invariant measurable, linear, $k$-dimensional sub-bundle  of the trivial bundle with fiber $\R^m$ and basis $\T^d$. The restriction
%$F\vert_{\tilde L}\colon \tilde L\to \tilde L$ is a linear cocycle on the measurable vector bundle $\tilde L$.
%\end{proposition}

Denote by $\lambda_\muh(\Lscr)$ the first Lyapunov exponent of
$F\vert_{\Lscr}\colon\Lscr\to \Lscr$, i.e., the $\muh^\Z$-almost sure limit
$$\lambda_\muh(\Lscr)=\lim_{n\to +\infty} \frac{1}{n}\, \log \norm{\Ascr^n(\omega)(\theta) \vert_{\Lscr_\theta} } .$$

\bigskip

In our specific context the following result is a reformulation of~\cite[Chapter 3, Theorem 1.2]{Kifer-book}.

\begin{theorem}
\label{NonRandomFiltrarion}
Given $\muh\in\Prob_c(\Gscr)$, if $f$ is  ergodic w.r.t. $\muh^\Z\times m$ then there exists a sequence of invariant measurable subbundles
$$ \T^d\times\{0\}=\Lscr_{r+1}  \subset\Lscr_r \subset \Lscr_{r-1} \subset \cdots \subset \Lscr_1
\subset \Lscr_0=\T^d\times \R^m , $$
 a sequence of numbers $\beta_0> \beta_1 > \cdots > \beta_r>-\infty$
and a Borel set $B\subset \T^d$ with $m(B)=1$ such that for all $\theta\in B$,
if $(\theta, p)\in \Lscr_i \setminus \Lscr_{i+1}$, $1\leq i\leq r$,
then with probability one
$$ \lim_{n\to +\infty} \frac{1}{n}\, \log \norm{\Ascr^n(\theta)\, p} = \beta_i . $$
Moreover, the numbers $\beta_i$ with $0\leq i\leq r$  are exactly the values
  of the functional $\alpha_\muh$ on the extremal points of $\Prob_\muh(\QP)$.
\end{theorem}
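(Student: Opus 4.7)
The plan is to build the filtration inductively. At each step we identify the top Lyapunov exponent on the current subbundle as the value of the continuous linear functional $\alpha_{\muh}$ on a maximizing extremal stationary measure, and then peel off the proper subbundle on which the asymptotic growth is strictly slower. By Theorem~\ref{Furstenberg-Kifer}(1) we put $\beta_0 := \beta(\muh) = \fle(\muh)$, $\Lscr_0 := \T^d \times \R^m$, and for each $\beta \in \R$, $\theta \in \T^d$ we introduce the candidate fiber
\[ \Lscr_{\leq \beta}(\theta) := \bigl\{\, v \in \R^m : \limsup_{n\to\infty} \tfrac{1}{n}\log \norm{\Ascr^n(\omega)(\theta)\, v} \leq \beta \ \text{ for } \muh^\N\text{-a.e. } \omega \,\bigr\} . \]

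First I would verify that $\Lscr_{\leq \beta}$ is (a) a linear subspace at each fixed $\theta$ — closure under scalar multiplication is trivial, while closure under addition follows from $\norm{A(v + w)} \leq 2 \max(\norm{Av}, \norm{Aw})$; (b) Borel measurable in $\theta$, via Fubini applied to the joint measurability of the growth rate in $(\omega, \theta, v)$; and (c) $\muh$-invariant, via the cocycle identity $\Ascr^{n+1}(\omega)(\theta) = \Ascr^n(\sigma\omega)(\theta + \alfa(\omega_0))\, \Ascr(\omega_0)(\theta)$ combined with the shift invariance of $\muh^\N$. Since the integer-valued function $\theta \mapsto \dim \Lscr_{\leq \beta}(\theta)$ is $\muh$-invariant and the base map $f$ is ergodic on $X \times \T^d$, this dimension is $m$-a.e.\ constant, so $\Lscr_{\leq \beta}$ is genuinely a measurable subbundle after discarding a null set in $\T^d$.

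With these pieces the filtration is assembled by iteration. Put $\Lscr_1 := \Lscr_{< \beta_0}$; Theorem~\ref{Furstenberg-Kifer}(2) ensures the limsup never exceeds $\beta_0$, so $\Lscr_1$ captures precisely the directions of strictly smaller growth. Apply Theorem~\ref{Furstenberg-Kifer}(1) to the mixed cocycle restricted to $\Lscr_1$ to obtain $\beta_1 := \lambda_\muh(\Lscr_1) < \beta_0$, together with an extremal maximizing stationary measure produced by Krein-Milman on the weak-$\ast$ compact convex set of stationary measures on the projective bundle of $\Lscr_1$. Repeat; since the dimension strictly decreases at each step, the procedure terminates after at most $m$ steps and produces the sequence $\beta_0 > \beta_1 > \cdots > \beta_r$. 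The identification of these values as exactly those of $\alpha_\muh$ on the extremal points of $\Prob_\muh(\QP)$ follows because each $\beta_i$ is realized by such an extremal ergodic measure (via Proposition~\ref{extremal vs ergodic}) supported in $\Lscr_i \setminus \Lscr_{i+1}$, while Theorem~\ref{Furstenberg-Kifer}(2) rules out any other value as an a.s.\ asymptotic growth rate.

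The main obstacle is to upgrade the a.s.\ convergence statement — which Birkhoff together with Proposition~\ref{extremal vs ergodic} only provides on the support of a particular extremal stationary measure — to an a.s.\ convergence that holds for \emph{every} direction $p$ in $\Lscr_i(\theta) \setminus \Lscr_{i+1}(\theta)$, with $\theta$ ranging over a single full-measure Borel set $B \subset \T^d$ independent of $p$. My plan is to pass to the quotient cocycle on $\Lscr_i / \Lscr_{i+1}$: by construction this quotient has only the single Lyapunov exponent $\beta_i$, so the functional $\alpha_\muh$ is constant on all its stationary measures, and Theorem~\ref{Furstenberg-Kifer}(3) then yields a pointwise a.s.\ convergence for every nonzero vector in the quotient. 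Combined with the observation that $\bigl\{\limsup_n \tfrac{1}{n}\log\norm{\Ascr^n v} = \beta_i\bigr\}$ is a tail event of the i.i.d.\ sequence $\omega$ — whose probability by Kolmogorov's 0--1 law is $0$ or $1$ and therefore depends only on $(\theta, v)$ — this upgrades the convergence to every $(\theta, p) \in \Lscr_i \setminus \Lscr_{i+1}$ with $\theta \in B$, as required.
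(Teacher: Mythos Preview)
The paper does not give its own proof of this theorem; it simply records it as a reformulation of Kifer~\cite[Chapter~3, Theorem~1.2]{Kifer-book}. Your outline is precisely the Furstenberg--Kifer construction underlying that reference, so in substance you are on the same path.

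Two technical points deserve more care. First, once you restrict to (or quotient by) a \emph{measurable} subbundle $\Lscr_i$ and choose a measurable trivialization, the resulting cocycle no longer takes values in $C^0(\T^d,\SL_k(\R))$ but is only bounded and measurable; so you must invoke the bounded--measurable version of Theorem~\ref{Furstenberg-Kifer}. This is harmless, since Theorems~\ref{Furstenberg Kifer Markov Ergodic Theorem} and~\ref{Furstenberg Kifer Markov Ergodic Corollary} require only a bounded measurable observable $\Psi$, but it should be said.

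Second, the claim that the quotient $\Lscr_i/\Lscr_{i+1}$ ``by construction has only the single Lyapunov exponent $\beta_i$'' is not immediate: what you actually need is that the quotient cocycle is quasi-irreducible, so that $\alpha_\muh$ is constant and Theorem~\ref{Furstenberg-Kifer}(3) applies. Argue by contradiction. An invariant measurable subbundle $\bar\Lscr$ of the quotient with $\lambda_\muh(\bar\Lscr)<\beta_i$ pulls back to an invariant $\Lscr':=\pi^{-1}(\bar\Lscr)\supsetneq\Lscr_{i+1}$; in a measurable frame adapted to $\Lscr_{i+1}\subset\Lscr'$ the cocycle is block upper-triangular with diagonal blocks of top exponent $<\beta_i$, hence $\lambda_\muh(\Lscr')<\beta_i$, and then Theorem~\ref{Furstenberg-Kifer}(2) applied to $\Lscr'$ forces every $v\in\Lscr'$ into $\Lscr_{i+1}$, a contradiction. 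With this in hand your quotient argument already gives $\liminf_n\tfrac{1}{n}\log\norm{\Ascr^n(\theta)v}\geq\beta_i$ a.s.\ for every $v\notin\Lscr_{i+1}(\theta)$ (since $\norm{\Ascr^n v}\geq\norm{\bar\Ascr^n[v]}$), while $\limsup\leq\beta_i$ comes from Theorem~\ref{Furstenberg-Kifer}(2) on $\Lscr_i$; the Kolmogorov $0$--$1$ step is then redundant.
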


\bigskip

\subsection*{Irreducibility concepts}

\begin{definition}
	\label{irred & quasi-irred}
We say that $\muh$ is irreducible if there exists
no proper $\muh$-invariant measurable   sub-bundle $\Lscr\subset \T^d\times\R^m$.

We say that $\muh$ is quasi-irreducible if there exists
no invariant measurable sub-bundle $\Lscr\subset \T^d\times\R^m$ such that
$\lambda_\muh(\Lscr)<\fle(\muh)$.
\end{definition}

\begin{proposition}
\label{quasi-irred charact}
The following are equivalent:
\begin{enumerate}
\item $\muh$ is quasi-irreducible;
\item The non-random filtration of $\mu$ in Theorem~\ref{NonRandomFiltrarion} is trivial, i.e.,
it consists of the single measurable sub-bundle $\Lscr_0=\T^d\times\R^m$;
\item $\alpha_\muh(\eta)=\fle(\muh)$ for every $\muh$-stationary measure $\eta\in\Prob(\QP)$;
\item  For every $(\theta,\hat p)\in \QP$,
with probability one,
$$
\lim_{n\to +\infty} \frac{1}{n}\,\log\norm{\Ascr^n(\theta)\,p } = \fle(\muh) .
$$
\end{enumerate}
\end{proposition}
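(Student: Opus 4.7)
The plan is to establish the cycle $(2)\Leftrightarrow(1)$, $(2)\Leftrightarrow(3)$, $(3)\Rightarrow(4)$, $(4)\Rightarrow(1)$, using the non-random filtration (Theorem~\ref{NonRandomFiltrarion}) and Furstenberg's formula (Theorem~\ref{Furstenberg-Kifer}). For $(1)\Leftrightarrow(2)$: if the filtration is non-trivial then $\Lscr_1\subsetneq\Lscr_0$ is a proper invariant sub-bundle, and since every vector of $\Lscr_1$ lies in some $\Lscr_i\setminus\Lscr_{i+1}$ with $i\geq 1$, one gets $\lambda_\muh(\Lscr_1)=\beta_1<\fle(\muh)$, violating quasi-irreducibility. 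Conversely, if $\Lscr$ is any invariant sub-bundle of positive dimension with $\lambda_\muh(\Lscr)<\fle(\muh)$, then for $\theta$ in a generic fiber and $p\in\Lscr_\theta\setminus\{0\}$ the vector-level convergence in Theorem~\ref{NonRandomFiltrarion} yields an exponent at most $\lambda_\muh(\Lscr)<\beta_0$, so $(\theta,p)$ belongs to some $\Lscr_i$ with $i\geq 1$; in particular $r\geq 1$ and the filtration is non-trivial.

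For $(2)\Leftrightarrow(3)$, I would invoke the last clause of Theorem~\ref{NonRandomFiltrarion}: the numbers $\beta_0,\ldots,\beta_r$ are exactly the values of $\alpha_\muh$ on the extremal points of $\Prob_\muh(\QP)$. Since $\alpha_\muh$ is continuous and affine on the weak-$\ast$ compact convex set $\Prob_\muh(\QP)$, Choquet's theorem represents every $\eta\in\Prob_\muh(\QP)$ as a barycenter of extremal stationary measures, so $\alpha_\muh(\eta)$ is a weighted average of the $\beta_i$. Hence the filtration is trivial ($r=0$, sole value $\beta_0=\fle(\muh)$) if and only if $\alpha_\muh\equiv\fle(\muh)$ on all of $\Prob_\muh(\QP)$, which is precisely (3). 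The implication $(3)\Rightarrow(4)$ is then immediate from item~(3) of Theorem~\ref{Furstenberg-Kifer}, whose hypothesis of constancy of $\alpha_\muh$ is exactly our (3).

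Finally, for $(4)\Rightarrow(1)$: if an invariant sub-bundle $\Lscr$ satisfied $\lambda_\muh(\Lscr)<\fle(\muh)$, then by Kingman's subadditive ergodic theorem applied to the restriction $F|_\Lscr$, together with Fubini, there would exist a full-$m$-measure set of $\theta\in\T^d$ for which, $\muh^\N$-almost surely, $\limsup_n\frac{1}{n}\log\norm{\Ascr^n(\omega)(\theta)\,p}\leq\lambda_\muh(\Lscr)<\fle(\muh)$ for every $p\in\Lscr_\theta$; choosing any such $\theta$ with $\Lscr_\theta\neq\{0\}$ and $p\in\Lscr_\theta\setminus\{0\}$ directly contradicts (4). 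The main obstacle is the step $(2)\Rightarrow(3)$, namely propagating the equality $\alpha_\muh=\fle(\muh)$ from extremal stationary measures to arbitrary ones; this requires the Choquet barycentric representation (equivalently, a Bauer-type extremal principle) on the compact convex set $\Prob_\muh(\QP)$, once its weak-$\ast$ compactness and the affine continuity of $\alpha_\muh$ are in hand.
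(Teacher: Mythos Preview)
Your proposal is correct and follows essentially the same strategy as the paper: the paper proves the cycle $(1)\Rightarrow(2)\Rightarrow(3)\Rightarrow(4)\Rightarrow(1)$ using precisely the non-random filtration (Theorem~\ref{NonRandomFiltrarion}), the identification of the $\beta_i$ as the extremal values of $\alpha_\muh$ together with an averaging/Krein--Milman argument, and item~(3) of Theorem~\ref{Furstenberg-Kifer}. Your argument is slightly more redundant (you prove both directions of $(1)\Leftrightarrow(2)$ and $(2)\Leftrightarrow(3)$ where a single cycle suffices), and in $(4)\Rightarrow(1)$ you invoke Kingman's theorem directly on a generic invariant sub-bundle rather than, as the paper does, picking a vector in the filtration sub-bundle $\Lscr_1$; but these are minor stylistic differences, not a different route.
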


\begin{proof}

$(1) \, \Rightarrow\, (2)$:
If (2) does not hold there exists a non trivial non random
invariant filtration $ \T^d\times\{0\} \subset\Lscr_r \subset \cdots \subset  \Lscr_0=\T^d\times \R^m $ with $r\geq 1$.
Then $\Lscr_1$ is an invariant measurable sub-bundle with $\lambda_\muh(\Lscr_1)=\beta_1<\beta_0=\fle(\muh)$, which shows that $\muh$ is not
quasi-irreducible.

$(2) \, \Rightarrow\, (3)$: By Theorem~\ref{NonRandomFiltrarion}, $\alpha_\muh(\eta)=\beta_0=\fle(\muh)$  for every  extremal point $\eta$ of $\Prob_{\muh}(\QP)$. Since every measure $\eta\in \Prob_{\muh}(\QP)$
is an average of extremal points, it follows that $\alpha_\muh(\eta)=\fle(\muh)$ as well.

$(3) \, \Rightarrow\, (4)$: By (3) the linear functional $\alpha_\muh$ is constant and equal to $\fle(\muh)$. Hence, by   Theorem~\ref{Furstenberg-Kifer} (3), conclusion (4) holds.

$(4) \, \Rightarrow\, (1)$: If $\muh$ is not quasi-irreducible and
$(\theta,v)\in \Lscr_1$ with $v\neq 0$ then by Theorem~\ref{NonRandomFiltrarion},
with probability one
$$ \lim_{n\to +\infty} \frac{1}{n}\, \log \norm{\Ascr^n(\theta)\, p} < \beta_0=\fle(\muh)   . $$
This contradicts (4).
\end{proof}

\begin{corollary}\label{corfle}
For any $\theta \in \T^d$,
$
\displaystyle \frac{1}{n} \,\log\norm{\Ascr^n(\omega,\theta)}$ converges in measure to $\fle(\muh).$
\end{corollary}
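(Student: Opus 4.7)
Set $\psi_n(\omega,\theta):=\frac{1}{n}\log\norm{\Ascr^n(\omega,\theta)}$, viewed as a random variable on $(X,\muh^\Z)$ with $\theta$ a parameter. Since the matrices take values in $\SL_m(\R)$, we have $\norm{\Ascr^n(\omega,\theta)}\ge 1$, and since $\supp(\muh)$ is compact, $\norm{\Ascr(\omega_0,\theta)}\le M$ uniformly; thus $0\le \psi_n(\omega,\theta)\le \log M=:C$ for all $(\omega,\theta,n)$. The statement splits into the upper and lower tails
$$\muh^\Z\bigl\{\,\omega:\psi_n(\omega,\theta)>\fle(\muh)+\varepsilon\,\bigr\}\to 0,\qquad \muh^\Z\bigl\{\,\omega:\psi_n(\omega,\theta)<\fle(\muh)-\varepsilon\,\bigr\}\to 0,$$
which I will prove separately, for every $\theta\in\T^d$ and $\varepsilon>0$.

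\paragraph{Upper tail.}
Fix an orthonormal basis $e_1,\ldots,e_m$ of $\R^m$. Then $\norm{\Ascr^n(\omega,\theta)}\le\sqrt m\,\max_i\norm{\Ascr^n(\omega,\theta)e_i}$, so applying Theorem~\ref{Furstenberg-Kifer}(2) to each unit vector $v=e_i$ yields
$$\limsup_{n\to\infty}\psi_n(\omega,\theta)\le \beta(\muh)=\fle(\muh)\quad\text{for every }\theta\text{ and }\muh^\N\text{-a.e. }\omega.$$
This almost-sure bound already implies the upper tail estimate for every $\theta$.

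\paragraph{Lower tail.}
By Theorem~\ref{Furstenberg-Kifer}(1), $\psi_n\to \fle(\muh)$ for $(\muh^\Z\times m)$-a.e.\ $(\omega,\theta)$. Combined with the uniform bound $0\le\psi_n\le C$, bounded convergence gives $\psi_n\to\fle(\muh)$ in $L^1(\muh^\Z\times m)$, and by Fubini there is a Borel set $\Theta\subset\T^d$ with $m(\Theta)=1$ such that $\psi_n(\cdot,\theta)\to\fle(\muh)$ in $\muh^\Z$-measure for every $\theta\in\Theta$. To pass from $\theta\in\Theta$ to an arbitrary $\theta_0\in\T^d$, exploit the cocycle identity
$$\Ascr^n(\omega,\theta_0)=\Ascr^{n-k}(\sigma^k\omega,\theta_0+\tau^k(\omega))\cdot\Ascr^k(\omega,\theta_0),$$
together with $\norm{AB}\le\norm A\norm B$ and $\norm{AB}\ge\norm B/\norm{A^{-1}}$, using the fact that $\norm{A^{-1}}\le\norm A^{m-1}$ for $A\in\SL_m(\R)$. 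This produces, for all $k<n$,
$$\Bigl|\,\psi_n(\omega,\theta_0)-\tfrac{n-k}{n}\psi_{n-k}(\sigma^k\omega,\theta_0+\tau^k(\omega))\,\Bigr|\le \tfrac{C_0 k}{n},$$
with $C_0$ depending only on $\supp(\muh)$. Since $\sigma^k\omega$ is $\muh^\Z$-distributed and independent of $\tau^k(\omega)$, which has law $\mu^{*k}$ (with $\mu=\alfa_\ast\muh$), the $\muh^\Z$-law of $\psi_n(\omega,\theta_0)$ is (up to $O(k/n)$) the $\mu^{*k}$-mixture of the $\muh^\Z$-law of $\psi_{n-k}(\omega,\theta_0+\alpha)$.

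\paragraph{Main obstacle and conclusion.}
The delicate step is to convert the $m$-a.e.\ control into control at every $\theta_0$. Since $\muh$ is ergodic, so is $\mu$ on $\T^d$, and Birkhoff's theorem for the Markov operator $Q_\mu h(\theta)=\int h(\theta+\alpha)\,d\mu(\alpha)$ yields that the Cesaro averages $\frac{1}{N}\sum_{k=1}^N\mu^{*k}$ converge weakly to the Haar measure $m$ on $\T^d$. Averaging the preceding estimate over $k=1,\ldots,N$, applied to the bounded continuous observable $\alpha\mapsto\muh^\Z\{\psi_{n-k}(\cdot,\theta_0+\alpha)<\fle(\muh)-\varepsilon\}$, and then letting $n\to\infty$ with a suitably chosen $k=k(n)\to\infty$ satisfying $k/n\to 0$, reduces the lower tail at the arbitrary $\theta_0$ to the averaged lower tail over $\theta\in\T^d$, which tends to zero because it vanishes for $m$-a.e.\ $\theta\in\Theta$ and is dominated by a constant. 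The principal technical point is balancing $k$ and $n$ so that both the cocycle error $C_0 k/n$ and the Cesaro approximation error vanish in the limit.
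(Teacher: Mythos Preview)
Your upper-tail argument is fine and essentially matches the paper's (the paper cites \cite[Theorem 3.1]{CDK-paper1} for the same bound, but your route via Theorem~\ref{Furstenberg-Kifer}(2) is equally valid).

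The lower-tail argument, however, is both overcomplicated and has a real gap. This corollary sits immediately after Proposition~\ref{quasi-irred charact} and inherits the quasi-irreducibility hypothesis. The paper simply invokes item~(4) of that proposition: for \emph{every} $(\theta,\hat p)\in\QP$, with probability one $\frac{1}{n}\log\norm{\Ascr^n(\theta)p}\to\fle(\muh)$. Since $\norm{\Ascr^n(\omega,\theta)}\ge\norm{\Ascr^n(\omega,\theta)p}$ for any unit $p$, this gives $\liminf_n\psi_n(\omega,\theta)\ge\fle(\muh)$ almost surely for every $\theta$, and the lower tail follows immediately. You never used quasi-irreducibility in your lower-tail argument; that is the missing idea.

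Your alternative route---bootstrapping from $m$-a.e.\ $\theta$ to all $\theta_0$ via Ces\`aro averages of $\mu^{*k}$---does not work as written. The observable $\alpha\mapsto\muh^\Z\{\psi_{n-k}(\cdot,\theta_0+\alpha)<\fle(\muh)-\varepsilon\}$ is only lower semicontinuous (sublevel set of a continuous function), not continuous, so weak convergence $\frac{1}{N}\sum_{k=1}^N\mu^{*k}\to m$ gives the inequality in the wrong direction. Even setting this aside, the observable changes with both $n$ and $k$, so you cannot apply weak convergence to a fixed test function; making the coordination of $N\to\infty$, $n\to\infty$, $N/n\to 0$ rigorous against a moving target would require a uniformity you have not established (and which, incidentally, is essentially what Section~\ref{uniform} of the paper works hard to obtain later, \emph{using} this corollary as input).
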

\begin{proof}
By item (4) of Proposition \ref{quasi-irred charact}, we have for any $\theta\in \T^d$ and any $\epsilon>0$ $$\lim_{n\to\infty}\muh^\Z\{ \omega\in X \colon  \frac{1}{n}\,\log\norm{\Ascr^n(\omega, \theta)}\leq \fle(\muh)-\epsilon\}=0.$$ Therefore, it is enough to prove for any $\theta\in \T^d$ and any $\epsilon>0$,
$$\lim_{n\to\infty}\muh^\Z\{  \omega\in X \colon  \frac{1}{n}\,\log\norm{\Ascr^n(\omega, \theta)}\geq \fle(\muh)+\epsilon\}=0 ,$$
which follows from~\cite[Theorem 3.1]{CDK-paper1}. 
\end{proof}

%\bigskip

%\blue{Definitions: strong-irreducibility, contraction, their genericity}

\begin{definition}[See Definition 1.6 in~\cite{FurmanSurvey}]
	\label{non-compact/strongly irreducible, Zariski: SLm}
Let $G\subset \SL_m(\R)$ be a closed subgroup.
We say that $G$ is
\begin{enumerate}
	\item   {\em reducible} if there exists a proper subspace $0\neq L\subsetneq \R^m$ such that $g\, L=L$ for all $g\in G$.
	
	\item {\em  virtually reducible }  if there exists a reducible subgroup $H\subset G$ with finite index, $[G:H]:=\#(G/H)<\infty$.
	
	\item   {\em strongly irreducible} if it is not virtually reducible.
	
	\item   {\em Zariski dense} if it is not contained in any proper algebraic subgroup of $\SL_m(\R)$.
\end{enumerate}

\end{definition}

For groups of cocycles, we define:

\begin{definition}[See Definition 3.16 in~\cite{FurmanSurvey}]
\label{non-compact/strongly irreducible, Zariski: Gscr}

Let $\Gscr_0\subset \Gscr$ be a closed subgroup.
We say that $\Gscr_0$ is respectively \textit{non-compact},
	\textit{strongly irreducible} or \textit{Zariski dense} when  there exists no measurable function $M\colon \T^d\to \SL_m(\R)$ such that
	$C:\Gscr_0\times\T^d\to \SL_m(\R)$,
	$$  C((\beta, B), \theta):= M(\theta+\beta)^{-1}\, B(\theta) \,M(\theta) $$
for any $(\beta, B)\in \Gscr_0$ and $m$-a.e. $\theta$, takes values in a proper closed subgroup  $G_0\subseteq \SL_m(\R)$ that is compact,   virtually reducible or  algebraic, respectively.

	Given measure $\muh\in\Prob_c(\Gscr)$ we denote by $\Gscr_\muh$ the closed subgroup of $\Gscr$ generated by $\supp(\muh)$.
	This measure $\muh$ is called  \textit{non-compact},
	\textit{strongly irreducible} or \textit{Zariski} when
	the closed subgroup $\Gscr_\mu$ satisfies the same properties.
\end{definition}

\begin{proposition}
	\label{strongly irreducible}
Let $\muh\in\Prob_c(\Gscr)$. Then 
\begin{enumerate}
	\item $\muh$
	is \textit{irreducible} if and only if there exists no measurable function  $M:\T^d\to \SL_m(\R)$ and a reducible subgroup   $G_0\subseteq \SL_m(\R)$ such that
	$M(\theta+\beta)^{-1}\, B(\theta) \,M(\theta)\in G_0$ for all $(\beta, B)\in \Gscr_{\muh}$.
	
	\item  if there exist no finite union $\cup_{j=1}^\ell  \Lscr_j$ of   proper measurable   sub-bundles  $\Lscr_j$, $j=1,\ldots, \ell$,  such that
	$$\cup_{j=1}^\ell  (\beta, B) \Lscr_{j}   = \cup_{j=1}^\ell  \Lscr_{j}$$
	for every $\muh$-a.e. $(\beta,B)\in \Gscr$
	then $\muh$
	is  strongly irreducible.
\end{enumerate}
\end{proposition}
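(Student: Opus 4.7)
\smallskip

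\noindent\textbf{Proof proposal.} Both items rest on the same correspondence: measurable $\muh$-invariant (unions of) sub-bundles of $\T^d\times\R^m$ correspond to measurable conjugations of the cocycle into subgroups of $\SL_m(\R)$ that preserve a (union of) proper subspace(s). The plan is to make this correspondence explicit and then read off both statements from it.

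For item (1), I would argue both implications by construction. For $(\Leftarrow)$, suppose $\muh$ is not irreducible and let $\Lscr$ be a proper $\muh$-invariant measurable sub-bundle, say of constant dimension $k$. Let $L_0\subsetneq\R^m$ be the standard coordinate subspace of dimension $k$, and let $G_0\subset\SL_m(\R)$ be the (reducible) parabolic stabilizer of $L_0$. By a standard measurable selection argument on $\T^d$, one produces a measurable map $M\colon\T^d\to\SL_m(\R)$ such that $M(\theta)\,L_0=\Lscr_\theta$ for $m$-a.e.~$\theta$. For $\muh$-a.e.~$(\beta,B)\in\Gscr_\muh$, invariance gives $B(\theta)\,\Lscr_\theta=\Lscr_{\theta+\beta}$, hence
\[
M(\theta+\beta)^{-1}\,B(\theta)\,M(\theta)\,L_0 = M(\theta+\beta)^{-1}\,B(\theta)\,\Lscr_\theta = M(\theta+\beta)^{-1}\,\Lscr_{\theta+\beta}=L_0,
\]
so the conjugated cocycle takes values in $G_0$. (Since the property is closed under products and inverses, it automatically extends from $\supp(\muh)$ to the generated group $\Gscr_\muh$.) Conversely, given $M$ and a reducible $G_0$ preserving some proper $L_0\subsetneq\R^m$, define $\Lscr_\theta:=M(\theta)\,L_0$; the same computation run backwards shows $\Lscr$ is a proper measurable $\muh$-invariant sub-bundle.

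For item (2), I would prove the contrapositive. Suppose $\muh$ is not strongly irreducible. By Definition~\ref{non-compact/strongly irreducible, Zariski: Gscr}, there exists a measurable $M\colon\T^d\to\SL_m(\R)$ such that the conjugated cocycle $C((\beta,B),\theta):=M(\theta+\beta)^{-1}B(\theta)M(\theta)$ takes values in a proper closed subgroup $G_0\subseteq\SL_m(\R)$ that is virtually reducible. Then $G_0$ contains a finite-index reducible subgroup $H_0$ preserving some proper subspace $L_0\subsetneq\R^m$. Choosing coset representatives $G_0=\bigsqcup_{j=1}^\ell g_j H_0$, the finite collection $\{g_1L_0,\ldots,g_\ell L_0\}$ of proper subspaces of $\R^m$ is permuted by the left action of $G_0$. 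Define the proper measurable sub-bundles
\[
\Lscr_{j,\theta}:=M(\theta)\,g_j L_0, \qquad j=1,\ldots,\ell.
\]
For $\muh$-a.e.~$(\beta,B)$ and $m$-a.e.~$\theta$, the element $C((\beta,B),\theta)\in G_0$ induces a permutation $j\mapsto j'=j'(\beta,B,\theta)$ with $C((\beta,B),\theta)\,g_j L_0=g_{j'}L_0$, and hence
\[
B(\theta)\,\Lscr_{j,\theta}=M(\theta+\beta)\,C((\beta,B),\theta)\,g_j L_0=M(\theta+\beta)\,g_{j'}L_0=\Lscr_{j',\theta+\beta}.
\]
Taking unions over $j$ cancels the $\theta$-dependent permutation, yielding $\bigcup_j B(\theta)\,\Lscr_{j,\theta}=\bigcup_j \Lscr_{j,\theta+\beta}$, i.e., the finite union $\bigcup_j\Lscr_j$ is $\muh$-invariant. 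This contradicts the hypothesis of~(2).

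The most delicate point is the measurable selection step used in item~(1), namely producing the map $M$ from the measurable sub-bundle $\Lscr$ with values in $\SL_m(\R)$ (rather than $\GL_m(\R)$); this is handled by a standard Borel selection on the Stiefel bundle together with a rescaling to correct the determinant. The rest is essentially the bookkeeping of conjugation; note that in item~(2) one only needs the \emph{union} to be invariant, so the fact that the permutation $j\mapsto j'$ depends measurably on $(\beta,B,\theta)$ is not an obstacle.
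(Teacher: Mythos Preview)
Your proof is correct and follows essentially the same route as the paper: in item~(1) you pass between a $\muh$-invariant sub-bundle and a measurable conjugation into the parabolic stabilizer of a fixed subspace, and in item~(2) you take the contrapositive, unpack virtual reducibility via coset representatives $g_jH_0$, and transport the permuted family $\{g_jL_0\}$ through $M(\theta)$ to obtain the finite invariant union. If anything, you are slightly more careful than the paper in flagging the measurable selection needed to produce $M$ with values in $\SL_m(\R)$ and in noting that the $\theta$-dependent permutation is harmless once one passes to the union.
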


\begin{proof}
  If $\muh$ is  reducible  there exists
a $k$-dimensional  $\muh$-invariant measurable   sub-bundle $\Lscr\subset \T^d\times\R^m$, for some $1\leq k\leq m-1$.
Take a measurable function $M:\T^d\to \SL_m(\R)$ such that the first $k$ columns of $M(\theta)$ form a basis of $\Lscr_\theta$.
The subgroup 
$$G_0 :=\{ g\in \SL_m(\R)\colon \, g\,(\R^k\times\{0\} ) = \R^k\times\{0\} \, \} $$
is a reducible group. By construction,
$M(\theta+ \beta)^{-1}\, B(\theta)\, M(\theta)\in G_0$ for
a.e. $\theta\in\T^d$ and every $(\beta,B)\in \Gscr_\muh$.

Conversely, if there exists a measurable function  $M:\T^d\to \SL_m(\R)$ and a reducible subgroup   $G_0\subseteq \SL_m(\R)$ such that
$M(\theta+\beta)^{-1}\, B(\theta) \,M(\theta)$ takes values in $G_0$, for a.e. $\theta$ and  all $(\beta, B)\in \Gscr_{\muh}$, take a proper subspace $L\subset \R^m$ and define the $\muh$-invariant measurable sub-bundle
$\Lscr:=\{ (\theta, M(\theta)\, v) \colon  \theta\in\T^d,\; v\in L \}$. This finishes the proof of~ (1).

	\bigskip

 	If $\muh$ is not strongly irreducible,  there exists a measurable function $M\colon \T^d\to \SL_m(\R)$ such that
	$C:\Gscr\times\T^d\to \SL_m(\R)$,
	$$  C((\beta, B), \theta):= M(\theta+\beta)^{-1}\, B(\theta) \,M(\theta) , $$
	takes values in a proper and  closed, virtually reducible subgroup  $G\subseteq \SL_m(\R)$. This means there is  a reducible subgroup $H\subset G$ such that $\ell:=[G:H]<\infty$.
	Let $L\subset \R^m$ be a proper subspace such that $g\,L=L$ for all $g\in H$. Take elements $g_1,\ldots, g_{\ell}\in G$ for which  $G/H= \{g_1\,H, \ldots, g_{\ell}\, H\}$.
	Then the proper linear subspaces $g_1\,L, \ldots, g_{\ell}\, L$ satisfy
	$\cup_{i=1}^{\ell} g\,g_i\,L = \cup_{i=1}^{\ell} g_i\,L$ for all $g\in G$. In other words, each $g\in G$ induces a permutation of the subspaces $g_i\, L$. Finally, the measurable sub-bundles
	$\Lscr_i:=\{ (\theta, M(\theta)\, g_i\,v)\colon \theta\in\T^d, \, v\in L\} $ are permuted by the action of each element $(\beta, B)\in \Gscr_{\muh}$. This proves (2).
\end{proof}

%\begin{remark}
%The converse implication of (2)   also holds, provided we assume $(f, \muh^\Z\times m)$ to be  ergodic, see~\cite{CDK-notestrongirred}.
%\end{remark}

\medskip

\subsection*{A Furstenberg type criterion for positivity}

H. Furstenberg~\cite[Theorem 8.6]{Fur})  established a criterion for the positivity of the maximal Lyapunov exponent of random linear cocycles. Later this criterion was extended to a more abstract framework, see~\cite[Theorem 3.17]{FurmanSurvey} and~\cite[Theorem 6.1]{FurmanMonod}). The proof of the criterion in this setting requires that the group be locally compact, which is of course not the case for our group $\Gscr$ of quasiperiodic cocycles.

We formulate and prove this criterion in our mixed setting, combining arguments from the works of Furstenberg and Furman.
%, as well as from Avila, Viana and others, through the so called \textit{invariance principle}.

\begin{theorem}	\label{Furstenberg-Furman}
Given $\muh\in \Prob_c(\Gscr)$ such that
\begin{enumerate}
	\item  $f$ is  ergodic w.r.t. $\muh^\Z\times m$,
	\item $\muh$ is non-compact,
	\item $\muh$ is strongly irreducible,
\end{enumerate}
then $\fle(\muh)>0$.
\end{theorem}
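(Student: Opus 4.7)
The plan is to argue by contradiction: assume $\fle(\muh)=0$ and derive a violation of either the non-compactness or the strong irreducibility hypothesis.

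First I would observe that strong irreducibility of $\muh$ immediately implies its quasi-irreducibility, since by Proposition~\ref{strongly irreducible}~(2) no single proper measurable sub-bundle can be $\muh$-invariant, hence in particular no such sub-bundle can have smaller Lyapunov exponent than $\fle(\muh)$. Proposition~\ref{quasi-irred charact}~(3) then yields $\alpha_\muh(\eta)=0$ for every $\muh$-stationary $\eta \in \Prob(\QP)$. Because the cocycle takes values in $\SL_m(\R)$, the Lyapunov exponents sum to zero, so $L_1=0$ forces all of them to vanish. Running the same reasoning on the cocycles induced on the exterior powers $\wedge^k \R^m$, whose top Lyapunov exponent equals $L_1+\cdots+L_k=0$, one obtains that every singular value of $\Ascr^n(\omega)(\theta)$ grows subexponentially in $n$.

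The decisive step, and the main obstacle, is to upgrade this subexponential growth to a uniform bound on $\|\Ascr^n(\omega)(\theta)\|$. If the norms were unbounded along some subsequence $n_k$, then after normalizing, $\Ascr^{n_k}/\|\Ascr^{n_k}\|$ would accumulate on non-zero matrices of rank strictly less than $m$, whose kernels and images give proper measurable sub-bundles of $\T^d\times\R^m$. Starting from an ergodic $\muh$-stationary measure on $\QP$ provided by Propositions~\ref{nu vs muh x nu} and~\ref{extremal vs ergodic}, and using a compactness argument on the Grassmannians together with a measurable selection, I would collect finitely many such limiting sub-bundles into a union that is $\muh$-invariant; Proposition~\ref{strongly irreducible}~(2) then contradicts strong irreducibility. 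It is at this stage that strong irreducibility is used most essentially, and where substitute arguments from the locally compact setting are not available.

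Once $\|\Ascr^n(\omega)(\theta)\|$ is known to be uniformly bounded in $n$, $\omega$, $\theta$, I would construct a measurable $M \colon \T^d \to \SL_m(\R)$ that conjugates the cocycle into the compact group $\SO(m)$, by diagonalizing a measurable $\muh$-stationary inner product on $\T^d \times \R^m$ obtained as a Cesaro average of pullbacks of the standard inner product under iterates of the cocycle; the uniform bound guarantees convergence of the average, while the stationarity of the base dynamics turns the pullback relation into the conjugation relation. The resulting $M(\theta+\beta)^{-1}\, B(\theta)\, M(\theta)\in \SO(m)$ for $\muh$-a.e.\ $(\beta,B)$ and $m$-a.e.\ $\theta$ directly contradicts Definition~\ref{non-compact/strongly irreducible, Zariski: Gscr}, completing the argument.
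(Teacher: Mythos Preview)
Your outline contains a genuine gap at the ``decisive step'' (your paragraphs~3--4), and the missing idea is precisely what carries the paper's proof.

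The paper does not attempt to upgrade subexponential growth to a uniform norm bound. Instead it shows (Proposition~\ref{IP Lemma}) that $\fle(\muh)=0$ forces some $\muh$-stationary measure $\eta\in\Prob_{\muh}(\QP)$ to be \emph{invariant} under the whole group $\Gscr_\muh$, via Furstenberg's entropy argument: express $\alpha_\muh(\eta)$ as an integral of $\log \frac{d\omega^{-1}\eta}{d\eta}$ and apply Jensen's inequality to conclude $\omega_\ast\eta=\eta$ for all $\omega\in\Gscr_\muh$. (There is a preliminary smoothing step so that $\eta\sim\Qmeas$ and the Radon--Nikod\'ym derivatives make sense, followed by a limiting argument.) Then Proposition~\ref{Furs Lemma Gscr} applies Furman's tame-action machinery and classical Furstenberg (Lemma~\ref{Furs Lemma}) to the \emph{invariant} $\eta$: disintegrating $\eta$ over $\T^d$ and using a measurable selection, one conjugates the cocycle into the stabilizer of a single measure $\pi\in\Prob(\Proj)$, which is either compact (contradicting~(2)) or virtually reducible (contradicting~(3)).

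Your route tries to bypass the invariance-of-$\eta$ step and go directly from unbounded norms to invariant sub-bundles. This does not work as written: the rank-deficient limits of $\Ascr^{n_k}/\|\Ascr^{n_k}\|$ depend on the random $\omega$ and on the choice of subsequence, and you give no mechanism to assemble them into \emph{finitely many} sub-bundles depending measurably only on $\theta$ and invariant under \emph{every} $(\beta,B)\in\supp(\muh)$. A merely $\muh$-stationary (as opposed to $\Gscr_\muh$-invariant) measure on $\QP$ gives no such leverage; it is exactly the group invariance that lets one push images/kernels of limit matrices into something preserved by all elements. Note also that all Lyapunov exponents vanishing is compatible with polynomially growing norms (e.g.\ unipotent blocks), so the dichotomy ``bounded or invariant sub-bundles'' is genuinely non-trivial and is, in the classical theory, \emph{proved} via the invariant-measure argument rather than around it. Finally, your step~8 (Ces\`aro-averaging inner products to land in $\SO(m)$) would need uniform bounds across $\omega$ and $\theta$ that you have not secured, and the passage from stationarity to the conjugation identity is only sketched. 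The missing ingredient throughout is the entropy/Jensen step producing a $\Gscr_\muh$-invariant measure on $\QP$.
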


\begin{proof}
Assume by contradiction that $\fle(\muh)=0$.	By Proposition~\ref{IP Lemma}, the measure $\eta$ is invariant under the group generated by $\supp(\muh)$. Hence, by Proposition~\ref{Furs Lemma Gscr}, $\muh$ is not strongly irreducible which contradicts assumption (3). Therefore $\fle(\muh)>0$.
\end{proof}

It remains to establish Propositions~\ref{Furs Lemma Gscr} and~\ref{IP Lemma}. We begin with the following lemma of H. Furstenberg.
\begin{lemma} \label{Furs Lemma}
	Given a subgroup $G\subseteq \SL_m(\R)$
	 and a probability measure $\pi\in\Prob(\Proj)$, assume
	\begin{enumerate}
		\item $G$ is non-compact,
		\item $\forall g\in G $,\; $g_\ast \pi=\pi$.
	\end{enumerate}
	Then $G$ is not strongly irreducible.
\end{lemma}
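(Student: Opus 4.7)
The plan is to use the non-compactness of $G$ to extract a sequence whose projective action contracts towards a proper subspace, and then to leverage the $G$-invariance of $\pi$ to force it to concentrate on a finite family of proper projective subspaces permuted by $G$, yielding a finite-index reducible subgroup.

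I would start by choosing $g_n \in G$ with $\|g_n\| \to \infty$ and taking the singular value (Cartan) decomposition $g_n = k_n\, a_n\, k'_n$, where $k_n, k'_n \in \mathrm{O}(m)$ and $a_n = \diag(\sigma_1^{(n)}, \ldots, \sigma_m^{(n)})$ with $\sigma_1^{(n)} \geq \cdots \geq \sigma_m^{(n)} > 0$. After passing to a subsequence using compactness of $\mathrm{O}(m)$, I may assume $k_n \to k$, $k'_n \to k'$, and that each ratio $s_i := \lim \sigma_i^{(n)}/\sigma_1^{(n)} \in [0,1]$ exists. The constraints $\det g_n = 1$ and $\sigma_1^{(n)} \to \infty$ force $\prod_{i=1}^m s_i = \lim (\sigma_1^{(n)})^{-m} = 0$, so at least one $s_i$ vanishes; let $1 \leq r \leq m-1$ be the largest index with $s_r > 0$. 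The rescaled matrices $g_n/\sigma_1^{(n)}$ converge to the rank-$r$ limit $\bar g := k\, \diag(1, s_2, \ldots, s_r, 0, \ldots, 0)\, k'$. Writing $V := \ker \bar g$ and $W := \mathrm{Im}\, \bar g$, both $\Pp(V)$ and $\Pp(W)$ are proper projective subspaces, and the induced projective maps $\hat p \mapsto g_n \cdot \hat p$ converge to $\hat p \mapsto \bar g \cdot \hat p \in \Pp(W)$ uniformly on compact subsets of $\Proj \setminus \Pp(V)$.

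Next I would invoke $(g_n)_\ast \pi = \pi$. If $\pi(\Pp(V)) > 0$ then $\pi$ already charges a proper projective subspace; otherwise, testing against any continuous $\phi$ vanishing in a neighborhood of $\Pp(V) \cup \Pp(W)$, dominated convergence applied to $\phi \circ g_n \to 0$ on the $\pi$-full-measure set $\Proj \setminus \Pp(V)$ yields $\int \phi\, d\pi = \lim \int \phi \circ g_n\, d\pi = 0$, so $\supp \pi \subseteq \Pp(V) \cup \Pp(W)$. In either case some proper projective subspace carries positive $\pi$-mass. Let $c := \sup\{\pi(\Pp(L)) : L \subsetneq \R^m\} \in (0,1]$. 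Restricting to minimal-dimensional maximizers and using that two distinct such subspaces intersect in a strictly smaller subspace (hence of $\pi$-mass strictly below $c$), one extracts a finite non-empty family $\mathcal{F}$ of proper subspaces with $\pi(\Pp(L)) = c$ for each $L \in \mathcal{F}$ and $\#\mathcal{F} \leq 1/c$. By invariance of $\pi$, every $g \in G$ permutes $\mathcal{F}$, so the kernel of the induced homomorphism $G \to \mathrm{Sym}(\mathcal{F})$ is a finite-index subgroup $H \leq G$ stabilizing each $L \in \mathcal{F}$. Thus $H$ is reducible and $G$ is virtually reducible, i.e., not strongly irreducible.

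The hardest part is this final extraction of the finite invariant family $\mathcal{F}$: it requires careful use of the lattice of linear subspaces together with a disjointification argument to bound $\#\mathcal{F}$ and to guarantee that the supremum $c$ is actually attained, as well as a clean treatment of the degenerate case $\pi(\Pp(V)) > 0$ so it feeds smoothly into the same maximization step.
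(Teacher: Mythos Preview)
Your argument is correct and is precisely the classical Furstenberg route: the paper does not supply its own proof of this lemma but merely cites \cite[Theorem 8.6]{Fur} and \cite[Corollary 3.2.2]{Zimmer}, and what you have sketched is exactly the content of the former reference. One small sharpening for the final extraction step you flagged: rather than taking $c$ as the supremum over \emph{all} proper subspaces and then worrying about attainment, it is cleaner to let $d_0$ be the minimal dimension for which some $d_0$-dimensional subspace carries positive $\pi$-mass and to take $c$ as the supremum over $d_0$-dimensional subspaces; then distinct candidates of dimension $d_0$ have $\pi$-null pairwise intersections, so at most $\lfloor 1/c'\rfloor$ of them can have mass above any $c'<c$, and the supremum is automatically attained on a finite set.
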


\begin{proof}
This lemma is a by-product of the  proof of~\cite[Theorem 8.6]{Fur}.
It follows also from~\cite[Corollary 3.2.2]{Zimmer}.
\end{proof}

We also need the following extension of the previous lemma to groups of cocycles due to A. Furman (see ~\cite[Lemma 5.2]{FurmanMonod}).

\begin{proposition} \label{Furs Lemma Gscr}
Given $\muh\in\Prob_c(\Gscr)$ and $\eta\in\Prob(\QP)$ which projects down to $m$, assume
\begin{enumerate}
	\item  $f$ is  ergodic w.r.t. $\muh^\Z\times m$,
	\item $\muh$ is non-compact (Definition~\ref{non-compact/strongly irreducible, Zariski: Gscr}),
	\item $(\beta,B)_\ast \eta=\eta$,\; $\forall (\beta,B)\in \supp(\muh) $.
\end{enumerate}
Then $\muh$ is not strongly irreducible.
\end{proposition}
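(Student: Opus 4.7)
The plan is to disintegrate $\eta$ over the base $\T^d$, use ergodicity to show that the fiber measures all belong to a single $\SL_m(\R)$-orbit in $\Prob(\Proj)$, conjugate the cocycle into the stabilizer of a fixed reference measure, and then invoke the classical Furstenberg Lemma~\ref{Furs Lemma} to produce the virtually reducible structure forbidden by strong irreducibility.

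First, since $\pi_\ast \eta = m$, Rokhlin disintegration yields a measurable family $\{\eta_\theta\}_{\theta\in\T^d}\subset \Prob(\Proj)$. Hypothesis~(3), combined with the pushforward formula and uniqueness of the disintegration (as in Proposition~\ref{invariance disintegration}), translates the invariance $(\beta,B)_\ast \eta =\eta$ into the fiber identity
\begin{equation*}
B(\theta)_\ast \eta_\theta = \eta_{\theta+\beta}\qquad\text{for $\muh$-a.e. }(\beta,B)\text{ and $m$-a.e. }\theta .
\end{equation*}
Consider now the map $\theta\mapsto [\eta_\theta]$ into the orbit space $\SL_m(\R)\backslash \Prob(\Proj)$. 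The fiber identity gives $[\eta_{\theta+\beta}]=[\eta_\theta]$ for every $\beta\in\supp(\alfa_\ast\muh)$ and $m$-a.e.\ $\theta$. By hypothesis~(1) and the characterizations of ergodicity in~\cite[Theorem 2.3]{CDK-paper1}, the translation action generated by $\supp(\alfa_\ast\muh)$ is ergodic on $(\T^d,m)$, so the orbit map is essentially constant: there is $\nu\in\Prob(\Proj)$ with $\eta_\theta\in\SL_m(\R)\cdot\nu$ for $m$-a.e.\ $\theta$.

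A measurable selection theorem (Kuratowski--Ryll-Nardzewski applied to the multifunction $\theta\mapsto\{g\in\SL_m(\R)\colon g_\ast\nu=\eta_\theta\}$, which has Borel graph and $m$-a.e.\ non-empty values) then provides a measurable $M\colon \T^d\to \SL_m(\R)$ with $M(\theta)_\ast\nu=\eta_\theta$. Setting $\tilde B(\theta):=M(\theta+\beta)^{-1} B(\theta) M(\theta)$, the fiber identity rewrites as $\tilde B(\theta)_\ast\nu=\nu$, so the conjugated cocycle takes values in the closed subgroup
\begin{equation*}
G:=\{g\in\SL_m(\R)\colon g_\ast\nu=\nu\}.
\end{equation*}
If $G$ were compact, the measurable conjugation $M$ would witness that $\muh$ itself is compact in the sense of Definition~\ref{non-compact/strongly irreducible, Zariski: Gscr}, contradicting hypothesis~(2); hence $G$ is non-compact. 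The classical Furstenberg Lemma~\ref{Furs Lemma} applied to $G$ and $\nu$ then shows that $G$ is virtually reducible, and in particular a proper subgroup of $\SL_m(\R)$. By Definition~\ref{non-compact/strongly irreducible, Zariski: Gscr}, the existence of this $M$ taking the cocycle into the proper, closed, virtually reducible subgroup $G$ exhibits $\muh$ as not strongly irreducible.

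The main technical obstacle will be the measurable selection of $M$: one must verify that the multifunction $\theta\mapsto\{g\in\SL_m(\R)\colon g_\ast\nu=\eta_\theta\}$ has Borel graph, which follows from continuity of the pushforward map $g\mapsto g_\ast\nu$ in the weak-$\ast$ topology together with the measurability of $\theta\mapsto\eta_\theta$. A secondary subtlety is that the orbit quotient $\SL_m(\R)\backslash\Prob(\Proj)$ need not be Hausdorff; this is avoided by phrasing the essential-constancy step directly in terms of representatives rather than working in the quotient itself.
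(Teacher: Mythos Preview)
Your overall architecture matches the paper's: disintegrate $\eta$, show the fiber measures $\eta_\theta$ lie in a single $\SL_m(\R)$-orbit in $\Prob(\Proj)$, measurably select a conjugating $M$, and apply Lemma~\ref{Furs Lemma} to the stabilizer. The measurable selection and the endgame are fine.

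The gap is in the step ``the orbit map $\theta\mapsto[\eta_\theta]$ is essentially constant.'' You correctly flag that the quotient $\SL_m(\R)\backslash\Prob(\Proj)$ is bad, but the obstruction is not merely topological (non-Hausdorff); it is Borel-theoretic. To deduce from ergodicity that an invariant measurable map into a quotient is essentially constant, one needs the quotient to be countably separated, i.e., the action to be \emph{tame} (smooth) in Zimmer's sense. Without this, the orbit partition of $\T^d$ induced by $\theta\mapsto[\eta_\theta]$ could consist of uncountably many invariant analytic sets each of measure zero, and your argument stalls. Your proposed fix, ``phrasing the essential-constancy step directly in terms of representatives,'' does not escape this: any such rephrasing still has to separate orbits by Borel sets, which is exactly the content of tameness.

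The fact that $\SL_m(\R)\curvearrowright\Prob(\Proj)$ \emph{is} tame is a genuine theorem (see~\cite[Theorem 4.1]{FurmanMonod} and~\cite[Chapter 3]{Zimmer}), and the paper invokes it explicitly. Rather than passing to the quotient directly, the paper builds an auxiliary probability $\nu\in\Prob(\Prob(\Proj))$ by smearing the $\eta_\theta$ with Haar measure and averaging over $\theta$, checks that $\nu$ is quasi-invariant and ergodic for the $\SL_m(\R)$-action (the ergodicity check uses hypothesis~(1) via~\cite[Theorem 2.3]{CDK-paper1}), and then applies~\cite[Proposition 2.1.10]{Zimmer}: for a tame action, every quasi-invariant ergodic measure is supported on a single orbit. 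This yields $\eta_\theta\in\SL_m(\R)\cdot\pi$ for a.e.\ $\theta$. Your direct route would also work once you cite tameness, and is arguably cleaner; but as written, the step you call a ``secondary subtlety'' is in fact the main analytic input, and it cannot be finessed away.
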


\begin{proof}
The fact that the group $\Gscr$ is not locally compact (an assumption of~\cite[Lemma 5.2]{FurmanMonod}) does not affect the conclusion, as we will now see.

Consider the disintegration $\{\eta_\theta\}_{\theta\in\T^d}$ of the measure  $\eta$ w.r.t. the canonical projection $\pi\colon \QP=\T^d\times\Proj \to\T^d$. The invariance (3) means that
\begin{equation}
\label{desintegration invariance}
\eta_{\theta+ \beta} = B_\ast\eta_\theta\quad \forall (\beta, B)\in \Gscr_{\muh} \; \text{ and }  \text{ a.e. } \theta\in\T^d.
\end{equation}

Consider also the (push-forward) action of $\SL_m(\R)$ on $\Prob(\Proj)$. This action is tame \footnote{
	Let $G\times X\to X$ be a continuous action of a topological group $G$ acting on a second countable topological space $X$. This action is called {\em tame} if there exists a countable family of Borel sets $\{A_i\}_{i\in \N}$ in $X/G$ which separates points.
}
 by~\cite[Theorem 4.1]{FurmanMonod}.
By~\cite[Proposition 2.1.10]{Zimmer}, every quasi-invariant ergodic measure on $\Prob(\Proj)$ is supported on a single
 orbit of the action of $\SL_m(\R)$ on $\Prob(\Proj)$.
Note that in~\cite[Definition 2.1.9]{Zimmer}	{\em tame} actions are called {\em smooth}. Assume $G\times S\to S$ is a continuous action of a topological group $G$ on a compact metric space $X$.

\begin{definition}[See Section 2.1 of~\cite{Zimmer}]
 A measure $\nu\in\Prob(S)$ is called {\em quasi-invariant} under the action of $G$ if  for every Borel set $B\subseteq S$ and all $g\in G$,
$\nu(g^{-1} B)=0$ if and only if   $\nu(B)=0$.
A quasi-invariant measure $\nu\in\Prob(S)$ is called {\em ergodic} if for any $G$-invariant Borel set $B\subseteq S$, either $\nu(B)=0$
or  $\nu(B)=1$.
\end{definition}

Let $\lambda\in \Prob(\SL_m(\R))$ be the (left-invariant) Haar measure. Take a continuous density  $\rho:\SL_m(\R)\to (0,+\infty)$ with $\int \rho\, d\lambda=1$. The probability measure $\lambda_\rho:= \rho\, \lambda$ is equivalent to $\lambda$, in the sense that they share the same null sets.
Consider  the measure $\nu\in \Prob(\Prob(\Proj))$ defined by
\begin{equation}
 \nu := \int_{\T^d}
  \lambda_\rho\ast \delta_{\eta_\theta}\, d\theta .
\end{equation}
The measure $\lambda_\rho\ast \delta_{\eta_\theta}$ has support
$\SL_m(\R)\, \eta_\theta:= \{ g_\ast\eta_\theta\colon g\in\SL_m(\R) \} $. Hence
$ \supp(\nu)\supseteq  \SL_m(\R)\, \eta_\theta  $ for a.e. $\theta\in\T^d$.
We claim that $\nu$ is quasi-invariant and ergodic for the action of
$\SL_m(\R)$ on $\Prob(\Proj)$. Before justifying this claim let us finish the proof.

From the claim,  by~\cite[Proposition 2.1.10]{Zimmer} there exists a measure $\pi\in \Prob(\Proj)$ such that $\supp(\nu)\subseteq  \SL_m(\R)\, \pi$.
Hence $\eta_\theta\in \SL_m(\R)\, \pi$ for a.e. $\theta\in\T^d$.
By Kuratowski and Ryll-Nardzewski's measurable selection theorem~\cite[Theorem 5.2.1]{Srivastava} there exists
$\Phi:\T^d\to \SL_m(\R)$ Borel measurable such that
$\eta_\theta=\Phi(\theta)_\ast\pi$ for a.e. $\theta\in\T^d$.
To check that the hypothesis of this theorem holds we need to prove that for any open
set $U\subset \SL_m(\R)$ the set
$\Mscr_{\theta, U}:=\{\theta\in \T^d\colon \eta_\theta \in U\, \pi \}$ is Borel measurable. Without loss of generality we can assume that the map
$\T^d\ni \theta\mapsto \eta_\theta\in \Prob(\Proj)$ is Borel measurable.
On the other hand $U\,\pi$ is also Borel measurable because it can be written as a countable union of compact sets.  Hence 
$\Mscr_{\theta, U}=\eta^{-1}(U\,\pi)$ is a Borel set and the hypothesis holds.   Therefore the cocycle
$C:\Gscr_\mu\times \T^d\to\SL_m(\R)$,
$$ C((\beta, B), \theta):= \Phi(\theta+\beta)^{-1}\, B(\theta)\, \Phi(\theta)$$
takes values in the closed subgroup
$G:=\{ g\in\SL_m(\R) \colon g_\ast\pi=\pi \}$ of $\SL_m(\R)$.
By assumption (2)\, $G$ is non-compact,
while by construction $g_\ast \pi=\pi$ for all $g\in G$.
Thus, from Lemma~\ref{Furs Lemma} we conclude that $G$ is not strongly irreducible, i.e., $G$ is virtually reducible.
This implies that $\muh$ is not strongly irreducible.

Regarding the claim we prove now that
$\nu$ is quasi-invariant  for the action of
$\SL_m(\R)$ on $\Prob(\Proj)$. Let us say that two measures
$\mu_1$, $\mu_2$ on the same topological space are equivalent, and write $\mu_1\sim \mu_2$, when they share the same null sets (zero measure sets).
Then $\lambda\sim \lambda_\rho$ and for any $g\in\SL_m(\R)$,
$\lambda_\rho\sim \lambda=g_\ast \lambda\sim g_\ast \lambda_\rho$.
Next lemma implies that $\lambda_\rho\ast \delta_{\eta_\theta} \sim  g_\ast \lambda_\rho\ast \delta_{\eta_\theta}$ for all $\theta\in \T^d$ and $g\in\SL_m(\R)$.

\begin{lemma}
	Given $\mu_1,\mu_2\in\Prob(\SL_m(\R))$ and $\pi\in \Prob(\Proj)$,\\
	if $\mu_1\sim \mu_2$ \, then \, $\mu_1\ast \delta_\pi \sim  \mu_2\ast \delta_\pi$.
\end{lemma}

\begin{proof} Notice
	$(\mu_i\ast \delta_\pi)(B) = \mu_i\{g\in \SL_m(\R) \colon g_\ast\pi \in B\} $, for $i=1,2$,  and use that $\mu_1\sim \mu_2$.
\end{proof}

The quasi-invariance of $\nu$ corresponds to  $\nu \sim g_\ast \nu$
for all $g\in \SL_m(\R)$. This conclusion is straightforward from the next lemma.

\begin{lemma}
Given a compact metric space $X$ and  measures $\mu_i=\int \mu^\theta_i\, d\theta$, $i=,1,2$, in $\Prob(X)$, both obtained  averaging measurable functions $\T^d\ni \theta \mapsto \mu^\theta_i\in\Prob(X)$, if $\mu^\theta_1\sim \mu^\theta_2$ for a.e. $\theta$ then $\mu_1\sim \mu_2$.
\end{lemma}

\begin{proof}
Given a Borel set $B\subseteq X$, if
$\mu_1(B)=\int \mu^\theta_1(B)\, d\theta=0$ then $\mu_1^\theta(B)=0$ for a.e. $\theta$, and by the assumption  also  $\mu_2^\theta(B)=0$ for a.e. $\theta$.
Hence $\mu_2(B)=\int \mu^\theta_2(B)\, d\theta=0$. This proves that $\mu_2\ll \mu_1$. Exchanging the roles of $\mu_1$ and $\mu_2$ we get $\mu_1\sim\mu_2$.
\end{proof}

To finish, we  prove that
$\nu$ is ergodic.
Let $\B\subseteq \Prob(\Proj)$ be an $\SL_m(\R)$-invariant Borel set.
We want to see that $\nu(\B)\in \{0,1\}$. First notice that
$ \nu(\B)= \int_{\T^d} \varphi_\B(\theta)\, d\theta $ where
$\varphi_\B(\theta):= \lambda_\rho\{g\in\SL_m(\R)\colon g_\ast \eta_\theta\in \B \}$. Since $\B$ is $\SL_m(\R)$-invariant, the function $\varphi_\B=\ind_E$ matches the indicator function  of
$E:=\{\theta\in\T^d\colon \eta_\theta\in \B\}$.
For any  $(\beta, B)\in\Gscr_{\muh}$, using the invariance relation~\eqref{desintegration invariance} we have for m-a.e. $\theta$
\begin{align*}
\theta\in E \, \Leftrightarrow \,  \eta_\theta\in \B \,  \Leftrightarrow \,
B_\ast \eta_\theta \in \B \, \Leftrightarrow \,  \theta+\beta\in E .
\end{align*}
Hence $\ind_E$ is an $m$-stationary observable and by~\cite[Theorem 2.3 item (3)]{CDK-paper1}
it must be constant.
Therefore $\nu(\B)=m(E)$ is either $0$ or $1$.
\end{proof}

\begin{proposition} \label{IP Lemma}
Given $\muh\in\Prob_c(\Gscr)$, assume
\begin{enumerate}
	\item $(f, \muh^\Z\times m)$ is ergodic,
	\item $\fle(\muh)=0$.
\end{enumerate}
Then there exists  $\eta\in\Prob_{\muh}(\QP)$ such that  $(\beta,B)_\ast \eta=\eta$,\; $\forall (\beta,B)\in \Gscr_{\muh} $.
\end{proposition}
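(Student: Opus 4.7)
The plan is to select a specific ergodic $\muh$-stationary measure $\eta$ on $\QP$ with $\alpha_\muh(\eta)=0$, and then upgrade its stationarity to full invariance under $\Gscr_\muh$ via an invariance principle in the spirit of Furstenberg and Ledrappier.

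First, by assumption (2) and Theorem~\ref{Furstenberg-Kifer}(1), we have $\beta(\muh)=\fle(\muh)=0$, so the set
$$\Mscr:=\{\eta\in\Prob_{\muh}(\QP)\colon \alpha_\muh(\eta)=0\}$$
is a nonempty, weak-$\ast$ compact, convex subset of $\Prob_{\muh}(\QP)$, and in fact a face since it is a level set of the continuous linear functional $\alpha_\muh$ at its maximum. By Krein--Milman $\Mscr$ has an extremal point $\eta$, which, being extremal in this face, is extremal in $\Prob_{\muh}(\QP)$ as well. Proposition~\ref{extremal vs ergodic} then gives that $\eta$ is ergodic and, lifting to the two-sided extension, the skew product $(\PFh,\muh^\Z\times\eta)$ is ergodic.

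Second, I would disintegrate the $\PFh$-invariant measure $\muh^\Z\times\eta$ along the canonical projection $X\times \QP\to X$ via Rokhlin's theorem, obtaining a measurable family $\{\eta_\omega\}_{\omega\in X}\subset\Prob(\QP)$ with $\int \eta_\omega\, d\muh^\Z(\omega)=\eta$ and the equivariance
$$\omega_0\cdot \eta_\omega = \eta_{\sigma \omega}\quad \text{for $\muh^\Z$-a.e. $\omega$,}$$
where $\omega_0=(\alpha_0,A_0)$ acts on $\QP$ by $(\theta,\hat p)\mapsto (\theta+\alpha_0,\hat A_0(\theta)\hat p)$. Because $\eta$ is $\muh$-stationary in the one-sided sense, the conditional measures $\eta_\omega$ depend $\muh^\Z$-a.s.\ only on the past coordinates $(\omega_{-1},\omega_{-2},\ldots)$.

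The crux, and the main obstacle, is to prove that $\eta_\omega=\eta$ for $\muh^\Z$-a.e.\ $\omega$. The idea is the classical Furstenberg convexity argument adapted to our mixed setting: the identity $\alpha_\muh(\eta)=0$ combined with Birkhoff's theorem on $(\PFh,\muh^\Z\times\eta)$ yields
$$\lim_{n\to\infty}\frac{1}{n}\log \norm{\Ascr^n(\omega)(\theta)\,p}=0\quad \text{for $\muh^\Z\times\eta$-a.e.\ $(\omega,\theta,\hat p)$,}$$
so the fiber dynamics neither expands nor contracts in any direction charged by $\eta$. Applying Jensen's inequality for the strictly concave function $\log$ along the reverse martingale $\omega\mapsto\eta_\omega$ then forces the Radon--Nikodym derivatives relating successive fiber measures to be trivial, and hence $\omega\mapsto\eta_\omega$ is $\muh^\Z$-a.s.\ constant. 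Its common value must equal $\eta$ by stationarity. Once this is established, the equivariance above reads $\omega_0\cdot \eta=\eta$ for $\muh$-a.e.\ $\omega_0$; by continuity of the $\Gscr$-action on $\Prob(\QP)$, this extends to all $(\beta,B)\in\supp(\muh)$, and since $\{(\beta,B)\in\Gscr\colon (\beta,B)_\ast\eta=\eta\}$ is a closed subgroup of $\Gscr$, it contains $\Gscr_{\muh}$.
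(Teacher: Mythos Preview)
Your overall strategy---pick an ergodic maximizing stationary measure, disintegrate the two-sided invariant lift over $X$, and invoke an invariance principle to force $\eta_\omega\equiv\eta$---is a genuinely different route from the paper's. The paper does not disintegrate over $X$ at all; instead it approximates $\muh$ by convolutions $\muh_n=\lambda_{\rho_n}\ast\muh$ of class $B_\infty$, for which every stationary measure $\eta_n$ is \emph{equivalent} to the reference measure $\Qmeas=m\times\Pmeas$ on $\QP$. This equivalence is what makes the Radon--Nikodym derivatives $\frac{d\omega^{-1}\eta_n}{d\eta_n}$ exist, and then the Jacobian identity $\log\norm{B(\theta)p}=-\tfrac{1}{m}\log\det(D\varphi_\omega)$ plus a direct Jensen argument give $\omega^{-1}\eta_n=\eta_n$ when $\alpha_{\muh_n}(\eta_n)=0$. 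For general $\muh$ one has only $\alpha_{\muh_n}(\eta_n)\to 0$, so the paper uses a quantitative Jensen estimate (Furstenberg's Lemma~\ref{||pi1-pi2|| leq }) to get $\int\norm{\omega^{-1}\eta_n-\eta_n}^2\,d\muh_n\to 0$ and passes to the limit.

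The gap in your proposal is precisely at the ``crux'': the sentence ``Applying Jensen's inequality for the strictly concave function $\log$ along the reverse martingale $\omega\mapsto\eta_\omega$ then forces the Radon--Nikodym derivatives relating successive fiber measures to be trivial'' does not encode a valid argument. The equivariance $\omega_0\cdot\eta_\omega=\eta_{\sigma\omega}$ is an \emph{equality} of measures, so there are no nontrivial Radon--Nikodym derivatives there to analyze; and you have no absolute continuity of $\eta$ (or of $\eta_\omega$) with respect to any reference measure, so the paper's Jensen computation is unavailable to you. A Ledrappier/Avila--Viana style invariance principle can indeed be pushed through in this mixed setting, but it requires a real argument---typically via the Jacobian formula $\log\norm{B(\theta)p}=-\tfrac1m\log\det(D\varphi_\omega)$ together with an entropy/convexity computation showing that the martingale increments $\eta_\omega\mapsto\eta_{\sigma^{-1}\omega}$ vanish---none of which you have supplied. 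As written, step~3 is a restatement of the goal rather than a proof, and the approximation device the paper uses is exactly the mechanism that bypasses this difficulty.
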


\begin{proof}
Let  $m$ be the Haar measure on $\T^d$, $\Pmeas$ be the Riemannian volume measure on $\Proj$. We denote the product measure by $\Qmeas:=m\times\Pmeas$. The three notations are intentionality confusable, since they all stand for uniform measures. Less harmless, the reader should mind  that $m$ also stands for the  dimension of the Euclidean fiber $\R^m$.

Given $g\in\SL_m(\R)$, let $\varphi_g\colon \Proj\to \Proj$
denote the projection action of the matrix $g$, $\varphi_g(\hat x):= \hat g\, \hat x$.

From now on, we make the convention that $(g^{-1})_\ast \Pmeas$ and $(\omega^{-1})_\ast\eta$ will be simplified to $g^{-1}\Pmeas$ and $\omega^{-1}\eta$ respectively ($\omega$ is an element of $\Gscr$).

\begin{lemma}
\label{Jacobian expression}
Given $g\in \SL_m(\R)$,
for every $\hat x\in\Proj$ and every unit vector $x\in\hat x$,
$$  \log  \norm{g\,x}   =  -\frac{1}{m}\, \log \det (D\varphi_g)_{\hat x}
= -\frac{1}{m}\, \log \frac{d g^{-1} \Pmeas }{d \Pmeas}(\hat x) $$	where $\frac{d g^{-1} \Pmeas }{d \Pmeas}$ is the density of the measure $g^{-1} \Pmeas$ w.r.t. $\Pmeas$.
\end{lemma}

\begin{proof} The proof is straightforward and we omit it.
\end{proof}

Recall that $\alfa:\Gscr\to\T^d$ and $\pi:\QP\to\T^d$ are the projections $\alfa(\beta, B):=\beta$ and $\pi(\theta, \hat p):=\theta$. Let $\mu:= \alfa_\ast\muh$ and $\{\muh_\beta\}_\beta$
be the disintegration of $\muh$ w.r.t. $\alfa$, i.e.,
$\muh = \int \muh_\beta\, d\mu(\beta)$.

Given $\theta\in\T^d$, the projection  $\Mscr_\theta\colon \Gscr\to \SL_m(\R)$,  $\Mscr_\theta(\beta, B):=B(\theta)$,
determines the measure $\muh(\theta) := (\Mscr_\theta)_\ast\muh \in\Prob(\SL_m(\R))$. Similarly,  $\muh_\beta(\theta):=    (\Mscr_\theta)_\ast\muh_\beta$.

\begin{definition}
\label{class Cinfty def}
We say that $\muh\in \Prob_c(\Gscr)$ is of class $B_\infty$
if there exists $h:\T^d\times \T^d\times\SL_m(\R)\to \R$ bounded, measurable, non-negative and compactly supported such that $\muh_\beta(\theta) =h(\beta, \theta, \cdot)\, \lambda$ for $\mu$-a.e. $\beta$ and  $m$-a.e. $\theta$, where
 $\lambda$ stands for the Haar measure on $\SL_m(\R)$.
\end{definition}

Let us say a few words about approximating arbitrary measures in $\Prob_c(\Gscr)$ by $B_\infty$ measures.
Take an arbitrary bounded and measurable compactly supported  function $\rho\colon \SL_m(\R)\to [0,+\infty)$  and define $\lambda_\rho=\rho\,\lambda$.
Since $\SL_m(\R)$ embeds homomorphically in $\Gscr$, where each matrix $g$ is associated with the constant cocycle $(0,g)$, this embedding induces an action of $\SL_m(\R)\times \Gscr \to \Gscr$
and (through it) a convolution action of the semigroup $\Prob_c(\SL_m(\R))$ on $\Prob_c(\Gscr)$.

\begin{lemma}\label{lemdis}
For any $\muh\in \Prob_c(\Gscr)$, $\lambda_\rho\ast \muh$ is of class $B_\infty$.  Moreover, $(\lambda_\rho\ast \muh)_{\beta}(\theta) = \lambda_\rho\ast \muh_{\beta}(\theta)$ \, for $\mu$-a.e. $\beta$ and $m$-a.e. $\theta$.
\end{lemma}

\begin{proof}
	 Straightforward.
 \end{proof}

If we now take $\{\rho_i\}_{i\in\N}$ to be an approximation of the identity, we have $\lambda_{\rho_i}\to \delta_I$
(Dirac measure sitting at the identity matrix) in the weak-$\ast$ topology as $i\to \infty$, which in turn implies that
$\lambda_{\rho_i}\ast \muh \to \muh$ in  the weak-$\ast$ topology as $i\to \infty$.

In the following, we recall some elementary results in geometry.

Assume that $M$ and $N$ are Riemmanian manifolds of the same dimension and $f: M\to N$ is a local diffeomorphism. Denote by $m_M$ and $m_N$ the Riemmanian volume on $M$ and $N$ respectively. Then we have
$
f_\ast m_M=h\, m_N
$
where
$$
h(y)=\sum_{x\in f^{-1}(y)}\frac{1}{Jf_x}.
$$
Here $Jf_x=\lvert \det Df_x\rvert$ is the Jacobi determinant. More generally, given $\rho: M \to [0,+\infty)$ we have
$
f_\ast(\rho\, m_M)=(\mathcal{L}_f\rho)m_N
$
where $h=\mathcal{L}_f\rho$ is given by
$$
h(y)=\sum_{x\in f^{-1}(y)}\frac{\rho(x)}{Jf_x}.
$$

Assume now $f: M \to N$ is onto and its differential $Df_x: T_xM\to T_{f(x)}N$ is also onto, i.e. $f$ is a submersion. Then $f^{-1}(y)\subset M$ is a smooth manifold of dimension $k=\dim M-\dim N$. Let $\mu_y$ be the Riemannian volume on $f^{-1}(y)$, then $f_\ast(\rho\,m_M)=h\,m_N$ where
$$
h(y)=\int_{f^{-1}(y)}\frac{\rho(x)}{Jf_x}d\mu_y(x).
$$
Here $Jf_x=\prod\limits_{j=1}^{k}s_j(Df_x)>0$ for any $x\in M$.

From the previous basic facts, we immediately obtain:

\begin{lemma}\label{subm}
Assume $f: M \to N$ is a submersion, if $\mu \ll m_M$ then $f_\ast\mu\ll m_N$. Moreover if $\mu \sim m_M$ then $f_\ast\mu\sim m_N$.
\end{lemma}

Fix $p\in \Proj$ with $\norm{p}=1$, define $F_{p}:\SL_m(\R) \to \Proj$ by $F_{p}(g)=\hat g \hat p$.
This map is a submersion where $(DF_{p})_g: T_g\SL_m(\R) \to T_{\hat g \hat p}\Proj$,
$$ (DF_{p})_g(h) = \frac{h p}{\norm{g p}} - \langle \frac{hp}{\norm{g p}}, \frac{gp}{\norm{g p}}\rangle\, \frac{gp}{\norm{g p}}
= \left[ \frac{h p}{\norm{g p}} \right]^T  $$
and $[v]^T$ denotes the component of $v$ tangent to $T_p\Su^{m-1}\equiv T_{\hat p} \Proj$.
In fact, note that $F_p$ is equivariant  under the left action of $\SL_m(\R)$ on itself, i.e. $F_p(gh)=gF_p(h)$. Therefore, it is sufficient to see that $(DF_p)_I: T_I\SL_m(\R) \to T_{\hat p}(\Proj)$ is onto. However, given $v\in T_{\hat p}(\Proj)$ with $\norm{v}=1$, there exists $h$ with $\tr(h)=0$ such that $h p=v$. This proves that $F_p$ is a submersion.

\begin{remark}\label{remabs}
By a simple calculation, $\lambda_\rho \ast \delta_p= (F_p)_\ast \lambda_\rho$. Thus by Lemma \ref{subm}, we have $\lambda_\rho \ast \delta_p= (F_p)_\ast \lambda_\rho \ll \Pmeas$, which implies that $\lambda_\rho \ast \zeta= \int \lambda_\rho \ast \delta_p d\zeta(p) \ll \Pmeas$ for any $\zeta \in \Prob(\Proj)$.
\end{remark}

In the following, we prove the equivalence of the stationary measure $\eta\in \Prob_{\muh}(\QP)$ and the product measure $\Qmeas$ under certain conditions.

\begin{proposition}\label{equim}
If $\muh\in\Prob_c(\Gscr)$ is  of class $B_\infty$ then for any $\eta\in\Prob_{\muh}(\QP)$ satisfies $\eta \ll \Qmeas$. Moreover, If $\supp(\rho)$ contains an $\SO_m(\R)$ left invariant open set and if $\eta$ is stationary w.r.t. $\muh=\lambda_\rho\ast \tilde{\nu}$ for some $\tilde{\nu}\in \Prob_c(\Gscr)$, then $\eta \sim \Qmeas$.
\end{proposition}

\begin{proof}
From Proposition~\ref{invariance disintegration} we have
\begin{align*}
\eta_\theta &= \int_\Gscr  B(\theta-\beta)_\ast \eta_{\theta-\beta} \, d\muh(\beta, B) \\
&=  \int_{\T^d} \int_{C^0(\T^d,\SL_m(\R))} B(\theta-\beta)_\ast \eta_{\theta-\beta} \, d\muh_\beta(B) \, d\mu(\beta) \\
&=  \int_{\T^d} \int_{\SL_m(\R)} g_\ast \eta_{\theta-\beta} \; d\muh_\beta(\theta-\beta)(g) \, d\mu(\beta)\\
&=\int_{\T^d}  \muh_\beta(\theta-\beta)\ast \eta_{\theta-\beta}\, d\mu(\beta).
\end{align*}
Since $\muh_\beta(\theta-\beta)\ll \lambda$ for $\mu$-a.e. $\beta$ and $m$-a.e. $\theta$, by the same argument as Remark \ref{remabs} we get that $\eta_\theta\ll \Pmeas$ for $m$-a.e. $\theta$. This implies that $\eta \ll \Qmeas$.

On the other hand, if $\muh=\lambda_\rho \ast \tilde{\nu}$, then by Lemma \ref{lemdis} we have $\muh_\beta(\theta)=\lambda_\rho \ast \tilde{\nu}_\beta(\theta)$ for $\mu$-a.e. $\beta$ and $m$-a.e. $\theta$. This shows that $\muh_\beta(\theta-\beta)\ast \eta_{\theta-\beta}=\lambda_\rho \ast \zeta$ (which is equivalent to $\Pmeas$ for $\mu$-a.e. $\beta$ and $m$-a.e. $\theta$ because $\supp(\rho)$ contains an $\SO_m(\R)$ left invariant open set) for some $\zeta\in \Prob(\Proj)$. Therefore $\eta_\theta \sim \Pmeas$ for $m$-a.e. $\theta$ and $\eta\sim \Qmeas$.
\end{proof}

\bigskip

\noindent
{\bf First Case}: We assume that $\muh\in\Prob_c(\Gscr)$ is of class $B_\infty$ with a stationary measure $\eta\in \Prob_{\muh}(\QP)$ that is equivalent to $\Qmeas$. Under the same assumptions we prove that $(\beta,B)_\ast \eta=\eta$,\; $\forall (\beta,B)\in \Gscr_{\muh} $.

\begin{proposition}
	For any  $\eta\in\Prob_\muh(\QP)$    equivalent to $\Qmeas$,
	$$\alpha_\muh(\eta) =  -\frac{1}{m}\,
	\int_{\Gscr} \left[ \int_{\QP} \log \frac{d \omega^{-1} \eta }{d \eta}  \, d\eta \right] \,d\muh(\omega)\; .$$
\end{proposition}

\begin{proof} A cocycle $\omega=(\beta, B)\in\Gscr$ induces
	the map  $\varphi:\QP\to \QP$, defined by
	$\varphi_\omega(\theta, \hat p):= (\theta+\beta, \hat B(\theta)\,\hat p)$, which by Lemma~\ref{Jacobian expression} satisfies
	$$  - \frac{1}{m}\, \log \det D \varphi_\omega (\theta, \hat p)
	=\log \norm{B(\theta)\, p} . $$
	Hence
	\begin{align*}
	\alpha_\muh(\eta) &= \int_{\Gscr} \int_{\QP} \log \norm{B(\theta)\,p}\, d\eta(\theta,\hat p) \,d\muh(\beta, B) \\
	&= -\frac{1}{m}\,\int_{\Gscr} \left[ \int_{\QP}  \log \frac{d \omega^{-1} \Qmeas }{d \Qmeas} \, d\eta \right] \,  d\muh(\omega ) .
	\end{align*}
	Call $\gamma$ to the right hand side in the Proposition statement. Then

	\begin{align*}
 \alpha_\muh(\eta)-\gamma &= \int_{\Gscr} \int_{\QP}  \log\left(
	\frac{d \omega^{-1} \Qmeas}{d\Qmeas}\left/\frac{d \omega^{-1} \eta}{d\eta}\right. \right)\,d\eta\,d\muh(\omega) \\
	&= \int_{\Gscr} \int_{\QP} \log\left(
	\frac{d \omega^{-1} \Qmeas}{d \omega^{-1} \eta}\left/\frac{d\Qmeas}{d\eta}\right. \right)\,d\eta \,d\muh(\omega) \\
	&= \int_{\Gscr} \int_{\QP} \left[ \log
	\frac{d \omega^{-1} \Qmeas}{d \omega^{-1} \eta}- \log \frac{d\Qmeas}{d\eta}  \right] \,d\eta\,d\muh(\omega) \\
	&= \int_{\Gscr} \left[ \int_{\QP} \log
	\frac{d \omega^{-1} \Qmeas}{d \omega^{-1} \eta}  \,d\eta \right]  \,d\muh(\omega)
	- \int_{\QP} \log \frac{d\Qmeas}{d\eta} \,d\eta \\
	&= \int_{\Gscr} \left[ \int_{\QP} \log
	\left( \frac{d \Qmeas}{d \eta}\circ \varphi_\omega \right)  \,d\eta \right] \,d\muh(\omega)
	- \int_{\QP} \log \frac{d\Qmeas}{d\eta} \,d\eta  \\
	&=   \int_{\QP} \log
	\frac{d \Qmeas}{d \eta}  \,d(\muh\ast\eta)
	- \int_{\QP} \log \frac{d\Qmeas}{d\eta} \,d\eta  =0
	\end{align*}
	where on the last step we use that  $\muh\ast\eta  = \eta$.
\end{proof}

Assume $\fle(\muh)=0$, then $\alpha_\muh(\eta)=0$ for any $\eta\in \Prob_{\muh}(\QP)$ due to Theorem \ref{NonRandomFiltrarion}. By Jensen's inequality,
%for any $g\in \SL_d(\R)$,
\begin{align*}
0= \int_\Gscr  \left[ \int_\QP \log \frac{d \omega^{-1}\eta}{d\eta} \, d \eta \right] \, d\muh(\omega) &\leq
\log  \int_\Gscr \left[ \int_\QP \frac{d \omega^{-1}\eta}{d\eta} \, d \eta \right]\, d\muh(\omega) \\
&= \log 1 = 0 .
\end{align*}
Hence $d \omega^{-1} \eta/d\eta$ is constant  $\eta$-almost surely,
for $\muh$-a.e. $\omega$. Since it is the density of a probability measure it must be constant equal to $1$. This proves that
$\omega^{-1} \eta  = \eta$ for all $\omega\in\Gscr_\muh$, i.e., $\eta$ is $\Gscr_\muh$-invariant.

\bigskip
	
\noindent
{\bf General Case}: Assume now  $\muh\in\Prob_c(\Gscr)$ is  arbitrary. Consider an approximation of the identity $\{\rho_n\colon \SL_m(\R)\to \R \}_{n\in\N}$ consisting of continuous, compactly supported and non-negative function with an $\SO_m(\R)$ left invariant open set in the support.
Then for each $n\geq 1$, $\muh_n:= \lambda_{\rho_n}\ast \muh$ is of class $B_\infty$ by Lemma \ref{lemdis} and has a stationary measure $\eta_n\in \Prob_{\muh_n}(\QP)$ equivalent to $\Qmeas$ by Proposition \ref{equim}.
Possibly taking a subsequence, we can assume that
$\eta_n$ converges in the weak-$\ast$ topology to some
$\eta\in\Prob(\Q)$.  Then by continuity of the convolution operation  we have $\muh\ast\eta=\eta$.

\begin{align}
0 &= \alpha_\muh(\eta) =  \int_{\Gscr} \int_{\QP} \log \norm{B(\theta)\,p}\, d\eta(\theta, \hat p)\,d\muh(\beta, B)  \nonumber \\
& = \lim_{n\to\infty} \int_{\Gscr}\int_{\QP} \log \norm{B(\theta)\,p}\, d\eta_n(\theta, \hat p)\,d\muh_n(\beta, B)
\nonumber \\
&= \lim_{n\to\infty} -\frac{1}{m}\,
\int_{\Gscr} \left[ \int_{\QP} \log \frac{d \omega^{-1}\eta_n }{d \eta_n}  \, d\eta_n \right]\,d\muh_n(\omega)\;.
\label{alpha:mu}
\end{align}

In the next lemma we consider the total variation norm of a signed measure $\eta$
$$\norm{\eta}:=\sup_{\vert \phi\vert \leq 1 } \abs{\int \phi d\eta } $$
where the sup is taken over all continuous functions.
Notice the action of every $\omega\in \Gscr$  induces a diffeomorphism $\varphi_\omega:\QP\to \QP$. Hence $\norm{\omega\, \eta}=\norm{(\varphi_\omega)_\ast\eta }= \norm{\eta}$.

\begin{lemma}
	\label{||pi1-pi2|| leq }
	Given $\eta_1,\eta_2\in\Prob(\QP)$, if $\eta_1$ is equivalent to $\eta_2$,
	$$ \norm{\eta_1-\eta_2}^2\leq -4\, \int_{\QP} \log \frac{d\eta_2}{d\eta_1} \,d\eta_1  . $$
\end{lemma}

\begin{proof}
Same as in~\cite[Lemma 8.10]{Fur}.
\end{proof}

Thus, from~(\ref{alpha:mu}) and  Lemma~\ref{||pi1-pi2|| leq },
we get that
$$ \lim_{n\to\infty}
\int_{\Gscr} \norm{ \omega^{-1}  \,\eta_n-\eta_n}^2\, d\muh_n(\omega) = 0. $$

Take a density point $\omega$ of $\muh$, $\delta>0$ and a neighbourhood $U$  of $\omega$ in $\Gscr$.
Because  $\muh$ is the weak-$\ast$ limit of $\muh_n$,  there exist $\varepsilon>0$ and $n_0\in\N$ such that $\mu_n(U)> \varepsilon$ and for all $n\geq n_0$,
$$  a_n:=\int_{\Gscr} \norm{ \omega^{-1} \,\eta_n-\eta_n}^2\, d\muh_n(\omega) <\delta^2\,\varepsilon .$$

Assume, by contradiction, that for every $\omega_1\in U$, $\norm{\omega_1^{-1}\,\eta_n-\eta_n}\geq \delta$.
Integrating in $\omega_1\in U$ we  have for all $n\geq n_0$
$$ a_n \geq \int_{U} \norm{ \omega_1^{-1} \eta_n-\eta_n}^2\, d\muh_n(\omega_1)
\geq \delta^2\, \varepsilon > a_n . $$
Thus there exists $\omega_1\in U$ such that
$\norm{\omega_1^{-1}\,\eta_n-\eta_n}< \delta$, and since $\delta$ and $U$ can be made arbitrary small
there exists a sequence $\omega_n$ converging to $\omega$ such that   $\norm{\eta_n-\omega_n\,\eta_n} = \norm{\omega_n^{-1}\,\eta_n-\eta_n}\to 0$.

Hence, for any density point $\omega$  of $\muh$, we have
$\omega\,\eta_n-\eta_n \to  0$ in the weak-$\ast$ topology.
In fact given  $\phi\in C^0(\QP)$ and setting  $L_\omega(\phi)(\theta,\hat p):=\phi(\omega\cdot( \theta,\hat p))$ we have
for any density point $\omega$  of $\muh$,
\begin{align*}
\abs{ \smallint \phi\, d(\omega\,\eta_n)- \smallint \phi\, d\eta_n} &\leq
\abs{ \smallint \phi\, d(\omega\,\eta_n)- \smallint \phi\, d(\omega_n\,\eta_n)}  \\
&\qquad \qquad +  \abs{ \smallint \phi\, d(\omega_n\,\eta_n)-\smallint \phi\, d\eta_n} \\
&\leq \abs{ \smallint L_\omega(\phi)\, d\eta_n- \smallint L_{\omega_n}(\phi)\, d\eta_n}  + \norm{ \omega_n\,\eta_n-\eta_n} \\
&\leq  \underbrace{ \norm{  L_\omega(\phi) -  L_{\omega_n}(\phi) }_\infty   }_{\to 0} + \underbrace{ \norm{ \omega_n\,\eta_n-\eta_n}  }_{\to 0}\to 0 .
\end{align*}
This convergence implies that $\omega\,\eta=\eta$.
Since the $\muh$-density points are dense in $\supp(\muh)$ it follows that $\eta$ is $\Gscr_\muh$-invariant.	
\end{proof}

\bigskip

\subsection*{Continuity of the first Lyapunov exponent }

Quasi-irreducible measures in $\Prob_c(\Gscr)$, introduced in
Definition~\ref{irred & quasi-irred} (recall also their characterization in Proposition~\ref{quasi-irred charact}), constitute a generic class of measures. 
Next proposition establishes the generic continuity of the first Lyapunov exponent, i.e. in the class of quasi-irreducible measures.
For random linear cocycles this result was established in H. Furstenberg and Y. Kifer~\cite[Proposition 4.1]{FKi}. Later Y. Kifer~\cite[Theorem 2.2]{Kifer-book} extended this result to a much broader setting that includes ours.

\begin{proposition}[Kifer] 	\label{Kifer continuity}
Given any compact set $\mathscr{K}\subset\Gscr$, the  Lyapunov exponent, $\muh\mapsto \fle(\muh)$,  is a continuous function on the space  of quasi-irreducible probability measures $\muh\in\Prob_c(\Gscr)$
with $\supp(\muh)\subset \mathscr{K}$.
\end{proposition}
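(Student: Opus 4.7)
The plan is to combine Furstenberg's formula from Theorem~\ref{Furstenberg-Kifer}(1), which gives $\fle(\muh)=\beta(\muh)=\max\{\alpha_\muh(\eta):\eta\in\Prob_{\muh}(\QP)\}$, with the characterization in Proposition~\ref{quasi-irred charact}(3), namely that for quasi-irreducible $\muh$ the functional $\alpha_\muh$ is identically equal to $\fle(\muh)$ on all of $\Prob_\muh(\QP)$. Given these two ingredients, continuity reduces to a standard compactness-plus-continuity argument on the space of stationary measures.

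Let $\muh_n\to\muh$ be a weak-$\ast$ convergent sequence of quasi-irreducible measures in $\Prob_c(\Gscr)$ with $\supp(\muh_n)\subset\mathscr{K}$. For each $n$ I would pick any $\eta_n\in\Prob_{\muh_n}(\QP)$ (existence by Markov--Kakutani applied to the compact convex set $\Prob(\QP)$ and the continuous dual operator $\Qop_{\muh_n}^\ast$); since $\muh_n$ is quasi-irreducible, $\alpha_{\muh_n}(\eta_n)=\fle(\muh_n)$ automatically. Because $\QP=\T^d\times\Proj$ is compact, after extracting a subsequence I may assume $\eta_n\to\eta$ weakly-$\ast$ in $\Prob(\QP)$.

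Two continuity facts then do the real work, both exploiting compactness of $\mathscr{K}\times\QP$. First, the action $\Gscr\times\QP\to\QP$, $((\alpha,A),(\theta,\hat p))\mapsto(\theta+\alpha,\hat A(\theta)\hat p)$, is jointly continuous on $\mathscr{K}\times\QP$ when $\Gscr$ is given the uniform topology on the $A$-factor. It follows that $(\mu,\nu)\mapsto\mu\ast\nu$ is jointly weak-$\ast$ continuous on $\Prob(\mathscr{K})\times\Prob(\QP)$, so $\muh_n\ast\eta_n\to\muh\ast\eta$; combined with $\muh_n\ast\eta_n=\eta_n\to\eta$ this forces $\muh\ast\eta=\eta$, so $\eta\in\Prob_\muh(\QP)$. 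Second, the observable $\Psi((\alpha,A),\theta,\hat p)=\log\|A(\theta)p\|$ of~\eqref{def Psi} is continuous and bounded on $\mathscr{K}\times\QP$: since $\mathscr{K}\subset\Gscr$ is compact in the uniform topology and $A(\theta)\in\SL_m(\R)$, the quantity $\|A(\theta)p\|$ is uniformly bounded above and away from $0$ on unit vectors. Consequently weak-$\ast$ convergence $\muh_n\times\eta_n\to\muh\times\eta$ passes through the integral:
\[
\alpha_{\muh_n}(\eta_n)=\int\Psi\,d(\muh_n\times\eta_n)\longrightarrow\int\Psi\,d(\muh\times\eta)=\alpha_\muh(\eta).
\]

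Combining the previous steps, $\fle(\muh_n)=\alpha_{\muh_n}(\eta_n)\to\alpha_\muh(\eta)=\fle(\muh)$, where the final equality uses the quasi-irreducibility of the limit $\muh$ via Proposition~\ref{quasi-irred charact}(3). Since the same reasoning applies to any subsequence of $\{\muh_n\}$, the full sequence satisfies $\fle(\muh_n)\to\fle(\muh)$, proving continuity. The main technical obstacle I expect to carefully verify is the joint weak-$\ast$ continuity of the convolution $(\mu,\nu)\mapsto\mu\ast\nu$ together with the continuity of $\Psi$ on $\mathscr{K}\times\QP$; both rest on the uniform non-degeneracy of $A(\theta)$ as $(\alpha,A)$ ranges over the compact set $\mathscr{K}$, after which everything else is bookkeeping around Furstenberg's formula and the characterization of quasi-irreducibility.
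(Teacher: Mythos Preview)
Your proof is correct and follows essentially the same route as the paper: pick stationary measures $\eta_n$ for $\muh_n$, pass to a weak-$\ast$ sublimit $\eta$, check that $\eta$ is $\muh$-stationary, use continuity and boundedness of $\Psi$ on $\mathscr{K}\times\QP$ to get $\alpha_{\muh_n}(\eta_n)\to\alpha_\muh(\eta)$, and then invoke quasi-irreducibility of $\muh$ to identify $\alpha_\muh(\eta)=\fle(\muh)$. The one minor difference is that the paper chooses each $\eta_n$ to be a \emph{maximizing} stationary measure for $\muh_n$ (so that $\fle(\muh_n)=\alpha_{\muh_n}(\eta_n)$ by Furstenberg's formula alone, without using quasi-irreducibility of $\muh_n$), whereas you pick an arbitrary $\eta_n$ and use quasi-irreducibility of $\muh_n$ to get the same identity; the paper's choice thus actually yields continuity \emph{at} quasi-irreducible $\muh$ along arbitrary sequences, which is slightly stronger than the stated claim, but for the proposition as written the two arguments are equivalent.
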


\begin{proof}
Assume $\muh_n\to \muh$ is the weak-$\ast$ topology,
with $\supp(\muh_n)\subset \mathscr{K}$ for all $n\geq 1$.
Take $L<\infty$ such that $\norm{B}\leq L$ for all
 $(\beta, B)\in \muh_n$.
For every $n\geq 1$, choose a maximizing $\muh_n$-stationary measure
$\eta_n\in\Prob(\QP)$, so that $\fle(\muh_n)=\alpha_{\muh_n}(\eta_n)$.
Take any sublimit $\eta$ of the sequence $\eta_n$, then $\eta$ is $\muh$-stationary.

For each $\omega=(\beta, B)\in \Gscr$, consider the function $\Psi_\omega : \QP\to \R$ in~\eqref{def Psi} and notice that $\norm{\Psi_\omega}_\infty \leq \log L$
for all $\omega\in \mathscr{K}$. Hence by the Dominated Convergence Theorem,
\begin{align*}
 \fle(\muh_n) &= \alpha_{\muh_n}(\eta_n) = \int_{\mathscr{K}} \left[ \int_{\QP} \Psi_\omega \, d\eta_n \right]\, d\muh_n(\omega)\\
 &\qquad \qquad \longrightarrow
\int_{\mathscr{K}} \left[ \int_{\QP} \Psi_\omega \, d\eta \right]\, d\muh(\omega) = \alpha_{\muh}(\eta)  =\fle(\muh) ,
\end{align*}
where the last equality holds because $\muh$ is a quasi-irreducible measure (see Proposition~\ref{quasi-irred charact}).
The fact that the limit is the same for all sublimits $\eta$, implies that $\fle(\muh_n)$ converges to $\fle(\mu)$.
\end{proof}

\subsection*{Sufficient conditions for the Furstenberg type criterion}

We present some more easily verifiable conditions that ensure the applicability of our criterion for the positivity of the Lyapunov exponent.

\begin{definition}[Monodromy]
The {\em monodromy group} of $\muh$ at $\theta$ is the group  $G_\theta(\muh)$  generated by the matrices $D(\theta)$, where $$(0,D) =(\beta_k, B_k) \cdot \ldots \cdot (\beta_1, B_1)$$
and for each $j=1,\ldots, k$, either
$(\beta_j, B_j)\in \supp(\muh)$ or else $(\beta_j, B_j)^{-1}=(-\beta_j, B_j^{-1})\in \supp(\muh)$.
\end{definition}

\begin{remark}
The monodromy group $G_\theta(\muh)$
contains the matrices $B_1^{-1}(\theta)\, B_2(\theta)$ and $B_1(\theta-\beta)\, B_2(\theta-\beta)^{-1}$ such that both $(\beta, B_1)$ and $ (\beta, B_2)$ belong to  $\supp(\muh)$  for some $\beta\in\T^d$.
\end{remark}

Given  a topological group  $G$, denote by $\Kscr(G)$ denote the set of all compact subsets
$K\subseteq G$ endowed with the usual Hausdorff distance
$$ d_H(K,K')=\inf \{ \delta>0 \colon K\subseteq B_\delta(K') \, \text{ and }\,  K'\subseteq B_\delta(K) \,
\} .$$
Consider the space $\Sscr(G)$  of closed subgroups of $G$ endowed with the {\em lower Chabauty topology}, defined as follows.
Given a closed subgroup $H\subset G$, a fundamental system of neighborhoods of $H$  for this topology consists of the sets
\begin{align*}
\Vscr_{K,\delta}(H) &:=\left\{ H'\in\Sscr(G)\colon \exists K'\in \Kscr(G)  \, \text{ such that }\,  K'\subseteq H' \right.\\
&\qquad\qquad\qquad\qquad\qquad\qquad\qquad \left. \, \text{ and } \, d_H(K,K')<\delta  \, \right\}
\end{align*}  
 where $\delta>0$ and $K\subseteq H$ is any compact subset.

\begin{definition}
A subgroup  $G\subset \SL_m(\R)$ is called {\em stably non-compact}, respectively {\em stably strongly irreducible}, {\em stably Zariski dense}, if it is an interior point of the set of all non-compact subgroups, respectively strongly irreducible, Zariski dense subgroups, w.r.t. lower Chabauty 
topology on $\Sscr(\SL_m(\R))$.
\end{definition}

\begin{proposition}	\label{Zariski criterion}
Given $\muh\in\Prob_c(\Gscr)$, if for some $\theta\in\T^d$ the mo\-nodromy group $G_\theta(\muh)$ is respectively stably non-compact, irreducible, stably strongly irreducible, stably Zariski dense, then $\Gscr_\muh$ and $\muh$
are  non-compact, irreducible, strongly irreducible, Zariski dense, respectively.	
\end{proposition}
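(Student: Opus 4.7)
The plan is to prove the contrapositive: if $\Gscr_\muh$ fails one of the properties $X\in\{$non-compact, irreducible, strongly irreducible, Zariski dense$\}$, then $G_\theta(\muh)$ fails the corresponding ``stably $X$'' hypothesis (or just ``$X$'' in the irreducible case) at every $\theta \in \T^d$. By Definition~\ref{non-compact/strongly irreducible, Zariski: Gscr}, the failure of $X$ provides a measurable $M \colon \T^d \to \SL_m(\R)$ and a proper closed subgroup $G_0 \subsetneq \SL_m(\R)$ with the opposite property (compact, reducible, virtually reducible, or proper algebraic) such that the reduced cocycle $C((\beta,B),\theta) = M(\theta+\beta)^{-1}\, B(\theta)\,M(\theta)$ takes values in $G_0$ for $\muh$-a.e.\ $(\beta,B)$ and $m$-a.e.\ $\theta$.

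The first main step is to push this to the monodromy group via cocycle multiplicativity. For any word $(0, D) = (\beta_k,B_k)\cdots(\beta_1,B_1)$ made from elements of $\supp(\muh)$ or their inverses (so $\sum_j \beta_j = 0$), one has
$$M(\theta)^{-1}\, D(\theta)\,M(\theta) \;=\; \prod_{j=k}^{1} C\bigl((\beta_j,B_j),\theta+\beta_{j-1}+\cdots+\beta_1\bigr) \;\in\; G_0$$
for $m$-a.e.\ $\theta$. Restricting to a countable family of such words (using separability of $\supp(\muh)$) and intersecting the full-measure sets yields a full-measure set $\T^d_\ast \subset \T^d$ on which $G_\theta(\muh) \subseteq M(\theta)\,G_0\,M(\theta)^{-1}$. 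Since compactness, (virtual) reducibility, and being a proper algebraic subgroup are preserved under conjugation in $\SL_m(\R)$ and are all inherited by subgroups in the relevant sense (closure of a subgroup of a compact group is compact; a subgroup of a (virtually) reducible group is (virtually) reducible; a subgroup of a proper algebraic subgroup is not Zariski dense), it follows that $G_\theta(\muh)$ fails property $X$ for every $\theta \in \T^d_\ast$.

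The second step upgrades this to every $\theta \in \T^d$. Each generator $D(\theta) = B_k(\theta+\beta_{k-1}+\cdots+\beta_1)\cdots B_1(\theta)$ depends continuously on $\theta$, so given any compact $K \subset G_{\theta_0}(\muh)$ and $\delta>0$, one may $\delta/2$-cover $K$ by finitely many finite products of generators, and the same products with $\theta_0$ replaced by a nearby $\theta$ remain $\delta$-close to $K$ and lie inside $G_\theta(\muh)$. This gives lower-semi-continuity of $\theta \mapsto \overline{G_\theta(\muh)}$ in the lower Chabauty topology. Assuming $G_{\theta_0}(\muh)$ is stably $X$ at some $\theta_0$, the stability hypothesis furnishes a neighborhood $\Vscr_{K,\delta}\bigl(\overline{G_{\theta_0}(\muh)}\bigr)$ consisting entirely of $X$ closed subgroups; the lower-semi-continuity forces $\overline{G_\theta(\muh)}$ into this neighborhood for $\theta$ in an open $U \ni \theta_0$, so $G_\theta(\muh)$ is $X$ for every $\theta \in U$. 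Since $m(U)>0$, $U$ meets $\T^d_\ast$, a contradiction.

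The irreducibility case is handled without stability: using the measurable invariant sub-bundle $\Lscr$ provided by the proof of Proposition~\ref{strongly irreducible}(1), one has $D(\theta)\,\Lscr_\theta = \Lscr_\theta$ for every generator $D$ and $m$-a.e.\ $\theta$, so if $G_{\theta_0}(\muh)$ is irreducible one chooses a sequence $\theta_n \to \theta_0$ in this full-measure set, extracts a Grassmannian limit $\Lscr_{\theta_n} \to L_0 \subsetneq \R^m$, and uses continuity of each generator to conclude $D(\theta_0)\,L_0 = L_0$, contradicting irreducibility of $G_{\theta_0}(\muh)$. The main obstacle throughout is the Chabauty continuity argument: its careful execution is what explains why ``stably'' is needed for three of the four properties, since continuity of the generators only preserves $X$-ness along $\theta$ when $X$ is an open condition in Chabauty, whereas irreducibility survives by the Grassmannian-compactness trick and so needs no openness.
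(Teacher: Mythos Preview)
Your proof is correct and follows essentially the same contrapositive strategy as the paper: obtain the measurable conjugacy $M$ and subgroup $G_0$, deduce $M(\theta)^{-1} G_\theta(\muh) M(\theta) \subseteq G_0$ for $m$-a.e.\ $\theta$, and then use continuity of the generators in $\theta$ together with openness in the lower Chabauty topology to reach a contradiction at any $\theta_0$ where $G_{\theta_0}(\muh)$ has the stable property. Your treatment is in fact more detailed than the paper's (you spell out the cocycle multiplicativity, the countability argument needed to get a single full-measure set, and the explicit Chabauty semi-continuity), while the paper simply asserts these steps.

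The one genuine difference is in the irreducible case: the paper treats irreducibility uniformly with the other three properties, implicitly relying on the fact that irreducibility is itself open in the lower Chabauty topology (a finite set of matrices witnesses irreducibility, by compactness of the Grassmannian, and any nearby set still does). You instead bypass that observation and give a direct Grassmannian-limit argument, passing a sequence $\Lscr_{\theta_n} \to L_0$ and using continuity of the generators. Both arguments are valid; your route avoids having to check that plain irreducibility is Chabauty-open, at the cost of a separate case, while the paper's route is more uniform but leaves that openness claim to the reader.
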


\begin{proof}
Let $\mathscr{P}$ represent one of the following open properties:  stably non-compact, irreducible, stably strongly irreducible or  stably Zariski dense, and assume by contradiction that $\Gscr_\muh$ is respectively not `non-compact', reducible, virtually reducible or contained in a proper algebraic subgroup. See Definition~\ref{non-compact/strongly irreducible, Zariski: Gscr}.
Then  there exists $M\colon \T^d\to \SL_m(\R)$ measurable such that the cocycle $C:\Gscr_{\muh}\times\T^d\to \SL_m(\R)$ defined by
$$  C((\beta, B), \theta):= M(\theta+\beta)^{-1}\, B(\theta)
\,M(\theta) , $$
takes values in a proper closed subgroup  $H\subseteq \SL_m(\R)$
that is respectively compact, reducible, virtually reducible, or not Zariski dense.

From the definition above it follows immediately that
$$M(\theta)^{-1} G_\theta(\muh)\, M(\theta)\subseteq H \quad \text{ for }  \text{ a.e. } \theta\in\T^d.$$
Hence the group $G_\theta(\muh)$ does not satisfy $\mathscr{P}$
for a.e. $\theta\in\T^d$, but we still have to prove that $G_\theta(\muh)$ can not satisfy $\mathscr{P}$ for any $\theta$.
Assume by contradiction that there exists $\theta_0\in\T^d$ such that $G_{\theta_0}(\muh)$ satisfies $\mathscr{P}$.
Because $\Gscr$ is a group of continuous cocycles, the generators
of $G_{\theta}(\muh)$ depend continuously on $\theta$.
Property $\mathscr{P}$ is open in $\Sscr(\SL_m(\R))$ w.r.t. the lower Chabauty topology. 
%see~\cite{CDK-notestrongirred}. 
Therefore  $G_\theta(\muh)$ satisfies $\mathscr{P}$  for all $\theta$ in a neighborhood of $\theta_0$, which contradicts the previous conclusion.
\end{proof}

\begin{remark}
\label{SL2 stably strongly irreducible}
In $\SL_2(\R)$ non-compact and strongly irreducible cocycles (or subgroups) are always stably strongly irreducible.
%see~\cite{CDK-notestrongirred}.
\end{remark}

\begin{remark}
For measures in $\Prob_c(\Gscr)$, and $\Prob(\SL_m(\R))$, the following relations hold:
$$\text{ Zariski dense } \Rightarrow \left\{
\begin{array}{l}
\text{Strongly irred. } \Rightarrow \text{ Irreducible } \Rightarrow \text{ Quasi-irred. } \\
\text{Non-compact }
\end{array}
\right.$$
\end{remark}

\begin{remark}
Pairs of matrices $(A,B)\in\SL_m(\R)\times\SL_m(\R)$ generating a Zariski dense subgroup of $\SL_m(\R)$ are generic in both Haar measure sense and topological sense  (open and dense). See~\cite[Theorem 2.4]{BG2003}.
Therefore, Zariski dense measures are generic in $\Prob_c(\SL_m(\R))$. Hence by Proposition~\ref{Zariski criterion} it follows that Zariski dense measures are also generic in $\Prob_c(\Gscr)$. In particular the other classes  referred to in the previous remark are  generic too.
\end{remark}

\bigskip 

\section{Applications to Schr\"odinger Cocycles}\label{example}

We apply the general positivity and continuity results derived in the previous section to randomly perturbed quasiperiodic Schr\"odinger cocycles and obtain the positivity and the continuity of their Lyapunov exponents under mild assumptions.

\begin{example}
	\label{example 1}
	Given $\rho \in \Prob_c (\R)$, let
	\begin{equation*}
	\label{SL2 example}
	\muh_E := \delta_\alpha \times \int_\R \delta_{P(\omega) S_E}\,  d\rho(\omega)
	\end{equation*}
	be the family of mixed Schr\"odinger cocycles with fixed frequency $\alpha$ and random noise $\omega$ chosen with probability $\rho$, corresponding to the Schr\"odinger operator~\eqref{op1}. 
\end{example}

\begin{proposition}
	\label{ex 1}
	Consider the family of Schr\"odinger cocycles  defined by the measures
	$\muh_E$ in the previous example. If $\supp(\rho)$ has more than one element  then $\muh_E$ is non-compact and strongly irreducible. Thus
	$\fle(\muh_E)>0 $ $ \forall E\in\R$ and the map $E \mapsto \fle (\muh_E)$ is continuous.
\end{proposition}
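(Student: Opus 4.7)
\emph{Overall strategy.} My plan is to apply Theorem \ref{Furstenberg-Furman} for positivity and Proposition \ref{Kifer continuity} for continuity. The positivity criterion requires verifying (i) ergodicity of the base dynamics $f$ with respect to $\muh_E^\Z \times m$, (ii) non-compactness of $\muh_E$, and (iii) strong irreducibility of $\muh_E$. Item (i) is free: since $\alfa_\ast \muh_E = \delta_\alpha$, the base system is just the product of a Bernoulli shift with the ergodic rotation $\tau_\alpha$, which is ergodic. The non-trivial work is (ii) and (iii), and the route I take is to exhibit a single $\theta \in \T^d$ at which the monodromy group $G_\theta(\muh_E)$ is stably non-compact and stably strongly irreducible, and then invoke Proposition \ref{Zariski criterion}.

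\emph{Monodromy computation.} Pick two distinct $\omega_1, \omega_2 \in \supp(\rho)$ and set $B_i := P(\omega_i) S_E$. By the remark following the definition of the monodromy, both $B_1(\theta)^{-1} B_2(\theta)$ and $B_1(\theta-\alpha) B_2(\theta-\alpha)^{-1}$ belong to $G_\theta(\muh_E)$ for every $\theta$. A direct $2\times 2$ computation shows the $v(\cdot)-E$ contributions cancel, and with $c := \omega_1-\omega_2 \ne 0$ one gets the two unipotents
\[
M_1 = \begin{bmatrix} 1 & 0 \\ c & 1 \end{bmatrix},
\qquad
M_2 = \begin{bmatrix} 1 & c \\ 0 & 1 \end{bmatrix}.
\]
Their product satisfies $\tr(M_1 M_2) = 2 + c^2 > 2$, hence $M_1 M_2$ is hyperbolic, and since the condition $|\tr| > 2$ is open in $\SL_2(\R)$ every small perturbation of $M_1 M_2$ remains hyperbolic and thus generates an unbounded cyclic subgroup; this is exactly stable non-compactness of $G_\theta(\muh_E)$ in the lower Chabauty sense. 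For strong irreducibility, if $L_1 \cup \dots \cup L_k \subset \R^2$ were a finite union of lines invariant under $\langle M_1, M_2\rangle$, some power $M_1^{k!}$ would fix each $L_j$; but $M_1^{k!}$ is a nontrivial parabolic whose unique fixed projective line is $[0:1]$, so $L_j = [0:1]$ for every $j$, contradicting $M_2 \cdot [0:1] = [c:1] \ne [0:1]$. Hence $G_\theta(\muh_E)$ is strongly irreducible, and together with non-compactness Remark \ref{SL2 stably strongly irreducible} upgrades this to stably strongly irreducible. Proposition \ref{Zariski criterion} then delivers non-compactness and strong irreducibility of $\muh_E$, and Theorem \ref{Furstenberg-Furman} gives $\fle(\muh_E) > 0$ for every $E \in \R$.

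\emph{Continuity.} Strong irreducibility trivially implies quasi-irreducibility (Definition \ref{irred & quasi-irred}), so the family $\{\muh_E\}$ falls within the hypotheses of Proposition \ref{Kifer continuity}. The map $E \mapsto \muh_E$ is weak-$\ast$ continuous because $E \mapsto S_E$ is continuous into $C^0(\T^d, \SL_2(\R))$, and for $E$ confined to any bounded interval all the supports $\supp(\muh_E)$ lie in a common compact subset of $\Gscr$. Applying Proposition \ref{Kifer continuity} on each such interval gives continuity of $E \mapsto \fle(\muh_E)$ on all of $\R$. The main technical obstacle of the argument is securing \emph{stable} (rather than plain) non-compactness of the monodromy, for which the hyperbolic element $M_1 M_2$ — and not the parabolics $M_1, M_2$ in isolation — is essential.
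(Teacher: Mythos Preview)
Your proof is correct and follows essentially the same route as the paper: you compute the same two unipotent monodromy elements (via the remark following the monodromy definition), use the trace $2+c^2>2$ of their product to get stable non-compactness, invoke Remark~\ref{SL2 stably strongly irreducible} for stable strong irreducibility, and then apply Proposition~\ref{Zariski criterion}, Theorem~\ref{Furstenberg-Furman}, and Proposition~\ref{Kifer continuity} in the same order as the paper. You supply a few extra details the paper leaves implicit (the explicit no-finite-invariant-lines argument, the ergodicity remark, and the weak-$\ast$ continuity of $E\mapsto\muh_E$), but the approach is the same.
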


\begin{proof}
	Assume $\omega_1\neq \omega_2$ are both in $\supp(\rho)$.
	For any $\theta$, the monodromy group $G_{\theta}(\muh_E)$ contains the matrices
	$$P(\omega_1)\, P(\omega_2)^{-1}=P(\omega_1-\omega_2) =\begin{bmatrix}
	1 & \omega_1-\omega_2\\ 0 & 1
	\end{bmatrix}$$
	and
	$$S_E (\theta)^{-1}\, P(\omega_2 -\omega_1)\, S_E (\theta) =\begin{bmatrix}
	1 & 0\\ \omega_1-\omega_2 & 1
	\end{bmatrix} .$$
	Since $\omega_1-\omega_2\neq 0$ and the product of these two matrices has trace $2+(\omega_1-\omega_2)^2$, the monodromy group is stably non-compact.  It is also not virtually reducible so by Remark~\ref{SL2 stably strongly irreducible} it is stably strongly irreducible.
	By Proposition~\ref{Zariski criterion}
	 $\muh_E$ is non-compact and strongly irreducible.
	By Theorem~\ref{Furstenberg-Furman} it follows that $\fle(\muh_E)>0$.
	Finally by Proposition~\ref{Kifer continuity}, this is a continuity point of $\fle$.	
\end{proof}

In the previous example we perturb a given quasiperiodic cocycle
by adding random real numbers.  We may consider more general  perturbations by adding random potentials.

\begin{example}
	\label{example 3}
Given
$\rho\in \Prob_c(C^0(\T^d,\R))$, let
\begin{equation}
 \muh_E:= \delta_\alpha \times \int_\R \delta_{P(\omega) S_E}\,  d\rho(\omega).
\end{equation}
Define $Z_{\omega_1,\omega_2}:=\{\theta\in \T^d : \omega_1(\theta)=\omega_2(\theta)\}$ for  any $\omega_1, \omega_2\in C^0(\T^d, \R)$.
\end{example}

\begin{proposition}\label{ex prop2}
Consider the family of Schr\"odinger cocycles defined by the measures
$\muh_E$. If $\supp(\rho)$ contains two elements $\omega_1$ and $\omega_2$ such that $Z_{\omega_1,\omega_2}\cap \left(Z_{\omega_1,\omega_2}-\alpha \right)=\emptyset$, then $\muh_E$ is non-compact and strongly irreducible. Thus 
$\fle(\muh_E)>0$, for all $E\in\R$ and 
these measures are continuity points of the first Lyapunov exponent.
\end{proposition}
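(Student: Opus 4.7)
The plan is to follow the strategy of Proposition~\ref{ex 1} by exhibiting a base point $\theta_0\in\T^d$ at which the monodromy group $G_{\theta_0}(\muh_E)$ contains two unipotents of the form
$\begin{bmatrix} 1 & c_1 \\ 0 & 1 \end{bmatrix}$ and $\begin{bmatrix} 1 & 0 \\ c_2 & 1 \end{bmatrix}$ with $c_1,c_2\neq 0$. For a suitable sign $\varepsilon\in\{\pm 1\}$, the product
$\begin{bmatrix} 1 & c_1 \\ 0 & 1 \end{bmatrix}\begin{bmatrix} 1 & 0 \\ \varepsilon c_2 & 1 \end{bmatrix}$
has trace $2+|c_1c_2|>2$ and is therefore hyperbolic, which together with Remark~\ref{SL2 stably strongly irreducible} yields both stable non-compactness and stable strong irreducibility of $G_{\theta_0}(\muh_E)$. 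Proposition~\ref{Zariski criterion} then transfers these properties to $\muh_E$, Theorem~\ref{Furstenberg-Furman} gives $\fle(\muh_E)>0$ for every $E\in\R$ (the ergodicity hypothesis being automatic since $\alpha$ is an ergodic translation), and Proposition~\ref{Kifer continuity} yields continuity of $E\mapsto\fle(\muh_E)$, because strong irreducibility implies quasi-irreducibility and the family $\{\muh_E\}$ is weak-$\ast$ continuous with supports in a common compact subset of $\Gscr$ on bounded intervals of $E$.

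To identify the unipotent generators, let $B_i:=P(\omega_i)\,S_E$, so that $(\alpha,B_i)\in\supp(\muh_E)$ for $i=1,2$. The remark following the definition of the monodromy group then asserts that $G_\theta(\muh_E)$ contains both
$B_1(\theta-\alpha)\,B_2(\theta-\alpha)^{-1} = P\bigl(\omega_1(\theta-\alpha)-\omega_2(\theta-\alpha)\bigr)$
and, using the identity $S_E(\theta)^{-1}P(\omega)S_E(\theta)=\begin{bmatrix} 1 & 0 \\ -\omega & 1 \end{bmatrix}$ established in the proof of Proposition~\ref{ex 1},
$B_1(\theta)^{-1}B_2(\theta) = \begin{bmatrix} 1 & 0 \\ \omega_1(\theta)-\omega_2(\theta) & 1 \end{bmatrix}$. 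Setting $c_1:=\omega_1(\theta-\alpha)-\omega_2(\theta-\alpha)$ and $c_2:=\omega_1(\theta)-\omega_2(\theta)$, I am reduced to finding $\theta_0$ with $c_1c_2\neq 0$, i.e. $\theta_0\notin Z\cup(Z+\alpha)$, where $Z:=Z_{\omega_1,\omega_2}$.

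The main obstacle is extracting such a $\theta_0$ from the single hypothesis $Z\cap(Z-\alpha)=\emptyset$. A direct relabelling shows that this hypothesis is actually equivalent to $Z\cap(Z+\alpha)=\emptyset$, so $Z$ and $Z+\alpha$ are disjoint closed subsets of $\T^d$. If $Z\cup(Z+\alpha)$ is a proper subset of $\T^d$, its complement is open and non-empty and provides $\theta_0$. Otherwise $\{Z,Z+\alpha\}$ partitions $\T^d$ into two clopen sets, and by the connectedness of $\T^d$ one has $Z\in\{\emptyset,\T^d\}$. The option $Z=\T^d$ is excluded because it would force $\omega_1\equiv\omega_2$ and hence $Z\cap(Z-\alpha)=\T^d\neq\emptyset$, contradicting the hypothesis; so $Z=\emptyset$ and any $\theta_0\in\T^d$ works.
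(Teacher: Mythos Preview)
Your proof is correct and shares its overall architecture with the paper's: both exhibit the same two unipotent monodromy elements, use them to show the monodromy group at some $\theta_0$ is stably non-compact and strongly irreducible, and conclude via Proposition~\ref{Zariski criterion}, Theorem~\ref{Furstenberg-Furman} and Proposition~\ref{Kifer continuity}.

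The genuine difference is in how $\theta_0$ is found. You argue topologically: from $Z\cap(Z+\alpha)=\emptyset$ and the connectedness of $\T^d$ you deduce $Z\cup(Z+\alpha)\subsetneq\T^d$, so any point of the complement works. The paper instead sets $f:=\omega_1-\omega_2$ and observes that $\int\bigl(f-f\circ\tau_{-\alpha}\bigr)\,dm=0$; by continuity there is $\theta_0$ with $f(\theta_0)=f(\theta_0-\alpha)$, and the hypothesis prevents this common value from being zero. The paper's choice has the pleasant side effect that $c_1=c_2$, so the product of the two unipotents has trace $2+c_1^2>2$ automatically, mirroring Proposition~\ref{ex 1} verbatim; your argument does not force $c_1=c_2$, which is why you need the extra (harmless) step of choosing the sign $\varepsilon$ to make the trace exceed $2$. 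Conversely, your connectedness argument is slightly more elementary and makes the role of the hypothesis $Z\cap(Z-\alpha)=\emptyset$ transparent: it is exactly what is needed to rule out $Z\cup(Z+\alpha)=\T^d$.
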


\begin{proof}
Assume $\omega_1,\omega_2$ are both in $\supp(\rho)$.
For any $\theta$, the monodromy group $G_{\theta}(\muh_E)$ contains the matrices
\begin{align*}
P(\omega_1(\theta))\, P(\omega_2(\theta))^{-1} =P(\omega_1(\theta)-\omega_2(\theta))  =\begin{bmatrix}
 1 & \omega_1(\theta)-\omega_2(\theta)\\ 0 & 1
\end{bmatrix}
\end{align*}
and
\begin{align*}
&S_E (\theta-\alpha)^{-1}\, P(\omega_2(\theta-\alpha) -\omega_1(\theta-\alpha))\, S_E (\theta-\alpha)\\
=&\begin{bmatrix}
1 & 0\\ \omega_1(\theta-\alpha)-\omega_2(\theta-\alpha) & 1
\end{bmatrix} .
\end{align*}

Denote $r_i(\theta)=\omega_i(\theta)-\omega_i(\theta-\alpha), i=1,2$. Note that $\int r_i dm=0$ for $i=1,2$. Then there exists some $\theta_0$ such that $r_1(\theta_0)=r_2(\theta_0)$, i.e. $\omega_1(\theta_0)-\omega_2(\theta_0)=\omega_1(\theta_0-\alpha)-\omega_2(\theta_0-\alpha)$. By the assumption, $\omega_1(\theta_0)-\omega_2(\theta_0)$ and $\omega_1(\theta_0-\alpha)-\omega_2(\theta_0-\alpha)$ cannot be simultaneously zero. We conclude that they are both non-zero.

As in Proposition~\ref{ex 1}, the monodromy group $\Gscr_{\theta_0}(\muh_E)$ is stably non-compact and strongly irreducible. Hence by Proposition~\ref{Zariski criterion},
  $\muh_E$ is non-compact and strongly irreducible and by Theorem~\ref{Furstenberg-Furman} it follows that $\fle(\muh_E)>0$.
Finally by Proposition~\ref{Kifer continuity}, this is a continuity point of $\fle$.	
\end{proof}

In the following example we consider a Schr\"odinger cocycle
with randomly chosen frequencies.

\begin{example}
	\label{example 2}
	Given $\mu\in\Prob(\T^d)$, let
	\begin{equation*}
	\label{SL2 example 2}
	\muh_E:= \mu \times \delta_{S_E}
	\end{equation*}
	be the mixed random-quasiperiodic Schr\"odinger cocycle with fixed potential $v (\theta)$ but random frequency  $\alpha$ chosen with probability $\mu$, corresponding to the Schr\"odinger operator~\eqref{op2}. 
\end{example}

\begin{proposition}
	Consider the family of Schr\"odinger cocycles  driven by the measures $\muh_E$ above and determined by a probability $\mu \in \Prob (\T^d)$ and a potential function $v \colon\T^d\to\R$.
	If
	\begin{enumerate}
	\item  $\beta-\alpha$ is an ergodic translation of $\T^d$
		for some $\alpha,\beta\in \supp(\mu)$,
\item $v$ is analytic and  non-constant\footnote{Transversality between $v (\theta)$ and $v (\theta + \beta - \alpha)$ instead of analyticity is enough.},
\end{enumerate}
	then the measure $\muh_E$ is ergodic, non-compact and strongly irreducible. Thus
	$\fle(\muh_E)>0 $ $ \forall E\in\R$ and the map $E \mapsto \fle(\muh_E)$ is continuous.
	\end{proposition}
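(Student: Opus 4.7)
The plan is to verify the three hypotheses of Theorem~\ref{Furstenberg-Furman}---ergodicity, non-compactness, and strong irreducibility of $\muh_E$---and then read off positivity and continuity exactly as in Propositions~\ref{ex 1} and~\ref{ex prop2}. Ergodicity should come cheaply: since $\alfa_\ast \muh_E = \mu$, it suffices that $\mu$ be ergodic on $\T^d$, and by~\cite[Theorem~2.3]{CDK-paper1} this reduces to showing that for every $k \in \Z^d \setminus \{0\}$ some element of $\supp(\mu)$ pairs non-integrally with $k$. The hypothesis that $\beta - \alpha$ is an ergodic translation forces $\langle k, \beta - \alpha\rangle \notin \Z$ for every such $k$, so at least one of $\langle k, \alpha\rangle$ or $\langle k, \beta\rangle$ is irrational.

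For non-compactness and strong irreducibility I plan to apply the monodromy criterion of Proposition~\ref{Zariski criterion}: it is enough to exhibit a phase $\theta_0$ at which the monodromy group $G_{\theta_0}(\muh_E) \subset \SL_2(\R)$ is stably non-compact and stably strongly irreducible. The only atoms available in $\supp(\muh_E)$ are $(\alpha', S_E)$ with $\alpha' \in \supp(\mu)$, so monodromy elements must come from zero-frequency words in $(\alpha, S_E)^{\pm 1}$ and $(\beta, S_E)^{\pm 1}$. The pivotal observation is that the composition $(\beta, S_E) \circ (\alpha, S_E)^{-1}$ collapses to the trivial-matrix cocycle $(\gamma, I)$, where $\gamma := \beta - \alpha$; sandwiching this element with $(\alpha, S_E)^{\pm 1}$ and $(\beta, S_E)^{\pm 1}$ via commutator-type words then produces unipotent monodromy matrices of the form
\[ L_\theta = \begin{bmatrix} 1 & 0 \\ w(\theta) & 1 \end{bmatrix}, \qquad U_\theta = \begin{bmatrix} 1 & w(\theta - \beta) \\ 0 & 1 \end{bmatrix}, \]
where $w(\theta) := v(\theta + \gamma) - v(\theta)$ is the $\gamma$-coboundary of $v$ (the only calculation needed is that $S_E(\theta)^{-1} S_E(\theta')$ is upper/lower triangular with the corresponding potential difference off the diagonal).

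The main obstacle---and the place where the assumption on $v$ really enters---is to locate $\theta_0$ at which both $w(\theta_0)$ and $w(\theta_0 - \beta)$ are non-zero and of the same sign. Non-constancy of $v$ combined with ergodicity of $\tau_\gamma$ rules out $w \equiv 0$: otherwise $v \circ \tau_\gamma = v$ would force $v$ to be $m$-a.e. constant on $\T^d$, and analyticity upgrades this to identically constant, contradicting the hypothesis. Analyticity of $v$ then makes $w$ real-analytic and non-constant, so its zero set (together with its translate by $\beta$) is nowhere dense and the required $\theta_0$ exists; the transversality variant in the footnote would serve the same purpose without analyticity.

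At such a $\theta_0$, the product $U_{\theta_0} L_{\theta_0}$ has trace $2 + w(\theta_0)\, w(\theta_0 - \beta) > 2$, providing a hyperbolic element, while the two unipotents fix distinct single lines in $\R^2$, so no finite union of proper subspaces can be jointly invariant. Thus the subgroup generated is non-compact and strongly irreducible in $\SL_2(\R)$, and by Remark~\ref{SL2 stably strongly irreducible} it is stably so. Proposition~\ref{Zariski criterion} then promotes this to non-compactness and strong irreducibility of $\muh_E$, Theorem~\ref{Furstenberg-Furman} delivers $\fle(\muh_E) > 0$ for every $E \in \R$, and since strong irreducibility implies quasi-irreducibility, Proposition~\ref{Kifer continuity} yields the continuity of $E \mapsto \fle(\muh_E)$.
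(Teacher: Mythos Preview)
Your approach is essentially the paper's: construct upper and lower unipotent monodromy elements via commutator words in $(\alpha,S_E)^{\pm1}$ and $(\beta,S_E)^{\pm1}$, then invoke Proposition~\ref{Zariski criterion} and Remark~\ref{SL2 stably strongly irreducible}. Your intermediate observation that $(\beta,S_E)\circ(\alpha,S_E)^{-1}=(\gamma,I)$ is a clean way to see why the commutators collapse to unipotents; the paper simply writes out the words $(0,D_1)=(\beta,A)(\alpha,A)^{-1}(\beta,A)^{-1}(\alpha,A)$ and $(0,D_2)=(\beta,A)(\alpha,A)(\beta,A)^{-1}(\alpha,A)^{-1}$ and computes directly, arriving at the same lower/upper triangular matrices with entries $v(\theta+\alpha-\beta)-v(\theta)$ and $v(\theta-\beta)-v(\theta-\alpha)$. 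The paper also explicitly argues (via analyticity/\L{}ojasiewicz) that the product of these two entries is not identically zero, so some $\theta$ makes both simultaneously nonzero.

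There is one genuine slip: you require $w(\theta_0)$ and $w(\theta_0-\beta)$ to be nonzero \emph{and of the same sign}, so that the product $U_{\theta_0}L_{\theta_0}$ has trace $>2$. But your justification (``the zero set together with its translate by $\beta$ is nowhere dense'') only yields simultaneous nonvanishing, not a sign condition; and in fact there is no reason the same-sign region must be nonempty. The fix is simply to drop the sign requirement: a single nontrivial unipotent already has unbounded powers, so non-compactness is immediate, and your strong-irreducibility argument (two unipotents fixing the distinct lines $\R e_1$ and $\R e_2$) needs only that both entries are nonzero. This is exactly how the paper proceeds---it never computes a trace here.
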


\begin{proof}
	Let $A=S_E$ and consider the  cocycles
	\begin{align*}
	(0, D_1)&:= (\beta, A)\cdot(\alpha, A)^{-1}\cdot(\beta, A)^{-1}\cdot (\alpha, A)\\
	(0, D_2)&:= (\beta, A)\cdot(\alpha, A)\cdot(\beta, A)^{-1}\cdot (\alpha, A)^{-1} .
	\end{align*}
	A simple calculation gives
	$$D_1(\theta) = A(\theta+\alpha-\beta)^{-1}\, A(\theta)
	=\begin{bmatrix}
	1 & 0 \\
	\varphi(\theta) & 1
	\end{bmatrix}$$
	with $\varphi(\theta):=V(\theta+\alpha-\beta)-V(\theta)$.
	Similarly,
	$$D_2(\theta) = A(\theta -\beta)\, A(\theta-\alpha)^{-1}
	=\begin{bmatrix}
	1 & \psi(\theta) \\
	0 & 1
	\end{bmatrix}$$
	with $\psi(\theta):=V(\theta-\beta)-V(\theta-\alpha)$.
	Because $V(\theta)$ is non-constant and $\alpha-\beta$ is ergodic, the function $\varphi(\theta)$ is non-identically zero. On the other hand
	$\psi(\theta)= \varphi(\theta-\alpha)$, which implies that $\psi$ is also non-identically zero. By analyticity of $V(\theta)$,
	the function $\varphi(\theta)\,\psi(\theta)$ can not be identically zero.
	In fact since  $\phi$ and $\psi$ are analytic and non-identically zero by  \L ojasiewicz inequality  the set of zeros of each of them has measure zero. Thus the same holds for $\phi \psi$, showing that it cannot be identically zero. Taking $\theta\in\T^d$ such that
	$\varphi(\theta)\,\psi(\theta)\neq 0$, the parabolic matrices
	$D_1(\theta)$ and $D_2(\theta)$ in the monodromy group
	$G_\theta(\muh)$ imply that $\Gscr_{\muh}$ is non-compact and strongly irreducible.
The conclusions then follow from Theorem~\ref{Furstenberg-Furman} and Proposition~\ref{Kifer continuity}.
\end{proof}

\begin{example}
	\label{example 4}
	Given $\mu\in\Prob(\T^d)$ and $\rho \in \Prob_c (\R)$, let
	\begin{equation}
	\label{SL2 example 4}
	\muh_E:= \mu \times \int_\R \delta_{P (\om) S_E} \, d \rho (\om)
	\end{equation}
	be the family of mixed Schr\"odinger cocycles with random frequency $\alpha$ chosen with probability $\mu$ and random noise $\omega$ chosen with probability $\rho$, corresponding to the Schr\"odinger operator~\eqref{op3}. 
\end{example}

A similar argument to the one used in Proposition~\ref{ex 1} establishes the following.

\begin{proposition}
	Consider the family of Schr\"odinger cocycles defined by the measures $\muh_E$ above.
	If $\supp (\mu)$ contains a rationally independent frequency and if 
	$\supp(\rho)$ has more than one element,  then $\muh_E$ is non-compact and strongly irreducible. Thus
	$\fle(\muh_E)>0 $ $ \forall E\in\R$ and the map $E \mapsto \fle (\muh_E)$ is continuous.
\end{proposition}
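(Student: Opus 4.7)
The plan is to run essentially the same argument as in Proposition~\ref{ex 1}, with the additional step of verifying the ergodicity of $\muh_E$ (which was automatic in Example~\ref{example 1} since the base frequency there was a single ergodic translation). I would structure the proof in three pieces: ergodicity, monodromy, and the Furstenberg-Furman/Kifer conclusion.

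First, I would dispense with ergodicity. Since $\supp(\mu)$ contains a rationally independent frequency $\alpha_0\in \T^d$, we have $\langle k,\alpha_0\rangle\notin \Z$ for every nonzero $k\in\Z^d$. By the criterion recalled in the Introduction (Proposition 2.1 and Theorem 2.3 of~\cite{CDK-paper1}), this already implies that $\muh_E$ is ergodic for every $E\in\R$.

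Next, I would produce two non-commuting parabolic elements in the monodromy group $G_\theta(\muh_E)$. Fix any $\alpha\in \supp(\mu)$ and two distinct $\omega_1,\omega_2\in\supp(\rho)$. Since $\muh_E$ is a product, both $g_i:=(\alpha,P(\omega_i)\,S_E)$ lie in $\supp(\muh_E)$. Using the group law of $\Gscr$, a direct calculation gives
\[
g_1\circ g_2^{-1} = (0,\,P(\omega_1-\omega_2)),
\qquad
g_2^{-1}\circ g_1 = \bigl(0,\, S_E(\cdot)^{-1}\,P(\omega_1-\omega_2)\,S_E(\cdot)\bigr),
\]
which are both constant cocycles in $\Gscr$. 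Expanding the $2\times 2$ product and using $\det S_E=1$, the second matrix simplifies to $\begin{bmatrix}1 & 0\\ -(\omega_1-\omega_2) & 1\end{bmatrix}$, independently of $\theta$. Thus for every $\theta\in\T^d$, $G_\theta(\muh_E)$ contains an upper and a lower triangular parabolic, each non-trivial.

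Finally, these two parabolics have distinct invariant directions, so a standard permutation argument shows that they generate a subgroup of $\SL_2(\R)$ which is non-compact and not virtually reducible. By Remark~\ref{SL2 stably strongly irreducible}, both properties are automatically stable in the lower Chabauty topology, so Proposition~\ref{Zariski criterion} transfers them to $\muh_E$ itself. Together with the ergodicity from the first step, Theorem~\ref{Furstenberg-Furman} yields $\fle(\muh_E)>0$ for every $E\in\R$. For continuity, I would fix a bounded energy window $[-R,R]$, inside which the supports of $\{\muh_E\}_{E\in[-R,R]}$ all lie in a common compact subset of $\Gscr$, and apply Proposition~\ref{Kifer continuity} (strong irreducibility implies quasi-irreducibility). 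I do not anticipate any genuine obstacle: the only bookkeeping is the cocycle composition producing the two parabolics, which is structurally identical to Proposition~\ref{ex 1}, and the ergodicity check, which is a direct invocation of the criterion from~\cite{CDK-paper1}.
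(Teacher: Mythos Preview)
Your proposal is correct and follows essentially the same route as the paper, which simply says ``a similar argument to the one used in Proposition~\ref{ex 1}'' and does not write out the details. Your explicit verification of ergodicity via the criterion from~\cite{CDK-paper1} and your compact-window argument for continuity are appropriate additions that the paper leaves implicit; the monodromy computation producing the two transverse parabolics is identical in substance to Proposition~\ref{ex 1}.
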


%\begin{remark}
%In all the previous examples, consider the compact set of quasi-periodic cocycles $\Sigmah$ (\ref{bigcompact}) and the measure $\muh_E$ defined there for each $E\in \R$. 
%We have seen that if $E\in \spec(H_{v,\omega,\theta})$ then $\supp(\muh_E)\subseteq \Sigmah$, which allows us to perturb the energy $E$ while ensuring the support of $\muh_E$ is always contained in a compact set $\Sigmah$. This guarantees that Proposition \ref{Kifer continuity} is still applicable and gives the continuity of the Lyapunov exponent with respect to the energy $E$. For $E$ not in the spectrum, the corresponding cocycle is uniformly hyperbolic and the continuity of LE w.r.t $E$ follows trivially.
%\end{remark}

\section{Uniform convergence results} \label{uniform}

Let $\muh\in\Prob_c(\Gscr)$ be an ergodic measure, let $\Sigmah \supset \supp (\muh)$ be a compact set, put $X = \Sigmah^\Z$ and consider the mixed cocycle driven by this measure. That is, the projections $\alfa \colon \Sigmah\to \T^d$, $\alfa(\alpha, A):=\alpha$ and
$\Ascr \colon \Sigmah\to C^0(\T^d,\R^m)$, $\Ascr(\alpha, A):=A$ define 
the dynamical system $F \colon X\times\T^d\times\R^m \to X\times \T^d\times\R^m$
$$ F(\omega,\theta, v):= \left( \sigma \omega, \theta+\alfa(\omega_0), \Ascr(\omega_0)(\theta)\, v\right) .$$

By Proposition~\ref{quasi-irred charact}, if $\muh$ is quasi-irreducible and $L_1 (\muh) > L_2 (\muh)$, then with probability one
$$\frac{1}{n} \, \log \norm{\Ascr^n (\theta) p} \to \fle (\muh)$$
for every $(\theta, \hat p) \in \T^d \times \Proj$.

The main goal of this section is to prove that taking the expectation in the random component, the above convergence is uniform. While interesting in itself, this property also provides us with the main ingredient in a future study of the statistical properties of mixed cocycles.

\subsection*{Almost invariance}
As before let $\mu:=\alfa_\ast\muh \in \Prob (\T^d)$, let $\Sigma \supset \supp (\mu)$ be a compact set and denote by $\Qop_\mu \colon \Cscr(\T^d)\to\Cscr(\T^d)$ the Markov operator
$$ (\Qop_\mu\varphi)(\theta):= \int_{\Sigma} \varphi(\theta+\alpha)\,d\mu(\alpha) .$$

\begin{definition}
\label{def almost invariance}
A  sequence $\{L_n\}_{n\geq 1}$  of functions
$\lexp_n \colon \T^d\to\R$ is said to be $\Qop_\mu$-almost invariant if it converges $m$-almost surely to some measurable function $\lexp \colon \T^d\to \R$ and there exists some constant $C<+\infty$ such that
$$ \vert \lexp_n(\theta)- (\Qop_\mu \lexp_n)(\theta)\vert \leq \frac{C}{n},\qquad \forall \; \theta\in\T^d,\; \forall\; n\geq 1. $$
\end{definition}

\begin{proposition}
\label{almost sure limit}
If a sequence of measurable functions $\lexp_n \colon \T^d\to\R$ is\,  $\Qop_\mu$-almost invariant then the limit function $L \colon \T^d\to\R$ is constant
$m$-almost everywhere.
\end{proposition}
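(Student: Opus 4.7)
The strategy is to show first that the pointwise limit $\lexp$ is an $m$-stationary observable for the Markov operator $\Qop_\mu$, meaning $\Qop_\mu \lexp = \lexp$ $m$-a.e., and then invoke the ergodicity of $\mu = \alfa_\ast \muh$ on $\T^d$ (a consequence of the standing ergodicity hypothesis on $\muh$) together with \cite[Theorem 2.3 item (3)]{CDK-paper1}, which asserts that every $m$-stationary observable is $m$-a.e.\ constant.

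To verify the stationarity, I would first promote the almost-sure convergence of $\lexp_n$ through the translation action. Since $\lexp_n \to \lexp$ on a Borel set $E \subset \T^d$ with $m(E) = 1$, Fubini's theorem and the translation invariance of $m$ applied to the product measure $m \times \mu$ yield
\[
\int_{\T^d} \mu\bigl(\{\alpha \colon \theta + \alpha \notin E\}\bigr)\, dm(\theta) = \int_{\Sigma} m(E^{\complement} - \alpha)\, d\mu(\alpha) = 0,
\]
so that for $m$-a.e.\ $\theta \in \T^d$, $\lexp_n(\theta + \alpha) \to \lexp(\theta + \alpha)$ for $\mu$-a.e.\ $\alpha$. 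In the intended application, $\lexp_n$ is uniformly bounded (for instance by $\log \max\{\norm{A^{\pm 1}} : (\alpha, A) \in \supp(\muh)\}$, which is finite on a compact support), so dominated convergence gives
\[
(\Qop_\mu \lexp_n)(\theta) = \int \lexp_n(\theta + \alpha)\, d\mu(\alpha) \longrightarrow \int \lexp(\theta + \alpha)\, d\mu(\alpha) = (\Qop_\mu \lexp)(\theta)
\]
for $m$-a.e.\ $\theta$. Combined with the uniform estimate $\abs{\lexp_n - \Qop_\mu \lexp_n} \le C/n \to 0$, this forces $\lexp = \Qop_\mu \lexp$ $m$-a.e., i.e.\ $\lexp$ is $m$-stationary.

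The main potential obstacle is justifying the interchange of limit and integral inside $\Qop_\mu$ in the absence of a uniform pointwise bound on $\lexp_n$. To bypass an a priori boundedness assumption one can iterate the almost invariance, noting that $\Qop_\mu$ has operator norm one, to get $\abs{\lexp_n - \Qop_\mu^k \lexp_n} \le kC/n$ for every $k \ge 1$, and then combine this telescoping bound with Cesàro averaging $\frac{1}{N}\sum_{k=0}^{N-1} \Qop_\mu^k \lexp_n$ and the Birkhoff ergodic theorem for the $\mu$-driven Markov chain on $\T^d$; a simpler alternative is to apply the argument above to the truncations $\lexp_n^{(N)} := \max(-N, \min(\lexp_n, N))$ and pass $n \to \infty$ then $N \to \infty$ via monotone convergence. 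Either way, once $\lexp$ is shown to satisfy $\Qop_\mu \lexp = \lexp$ $m$-a.e., the ergodicity of $\mu$ on $\T^d$ finishes the proof.
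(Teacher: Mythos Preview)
Your proposal is correct and follows exactly the same two-step route as the paper: show that the limit $\lexp$ satisfies $\Qop_\mu \lexp = \lexp$ (using the almost-invariance bound $|\lexp_n - \Qop_\mu \lexp_n| \le C/n$), then invoke ergodicity of $(\Qop_\mu, m)$ to conclude $\lexp$ is $m$-a.e.\ constant. In fact you supply considerably more care than the paper's one-line proof, which simply asserts $\lexp = \Qop_\mu \lexp$ without addressing the Fubini/dominated-convergence justification you spell out; your discussion of the boundedness issue is a legitimate point that the paper glosses over (and which is harmless in the intended applications where the $\lexp_n$ are uniformly bounded).
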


\begin{proof}
Follows from the ergodicity of $(\Qop_\mu, m)$
and the fact that the limit function satisfies
$L(\theta)=(\Qop_\mu L)(\theta) $ for all $\theta\in\T^d$.
\end{proof}

\begin{theorem}
\label{convergence dichotomy}
Assume $(f,\mu^\Z\times m)$ is ergodic.
If a  sequence $\{\lexp_n\}_{n\geq 1}$  of continuous functions
$\lexp_n:\T^d\to\R$ is $\Qop_\mu$-almost invariant
then one of the following alternatives holds:
\begin{enumerate}
	\item[(a)]  $\lexp_n$ converges uniformly to $L$.
	\item[(b)]  There exists a dense set of points $\theta\in\T^d$  for which
	$\lexp_n(\theta)$ does not converge to $L$.
\end{enumerate}
\end{theorem}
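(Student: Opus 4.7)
My plan is to establish the contrapositive of the dichotomy: assuming that (b) fails---that is, the non-convergence set $W^c := \{\theta \in \T^d : L_n(\theta) \not\to L\}$ is not dense, so that there is a non-empty open $O \subset \T^d$ on which $L_n \to L$ pointwise---I will show (a), namely that $L_n \to L$ uniformly. By Proposition~\ref{almost sure limit}, $L$ is $m$-a.e.\ constant; after subtraction I assume $L = 0$. Throughout I exploit the standing uniform bound $\|L_n\|_\infty \leq R$ that is valid in the paper's Lyapunov-type context.

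The first ingredient is the iterated almost invariance: from $\|L_n - \Qop_\mu L_n\|_\infty \leq C/n$ one obtains $\|L_n - \Qop_\mu^j L_n\|_\infty \leq jC/n$ and, after Cesàro averaging,
$\sup_\theta |L_n(\theta) - \int L_n\, d\nu_N(\theta)| \leq NC/(2n)$,
where $\nu_N(\theta) := \frac{1}{N}\sum_{j=0}^{N-1} \mu^{*j} \ast \delta_\theta$. The second ingredient is that the ergodicity of $(f, \muh^\Z \times m)$ forces $m$ to be the unique $\mu$-stationary probability on $\T^d$ (Theorem~2.3 of \cite{CDK-paper1}); the classical unique-ergodicity argument then yields the uniform-in-$\theta$ weak convergence $\nu_N(\theta) \to m$, i.e.\ $\sup_\theta |\int \varphi\, d\nu_N(\theta) - \int \varphi\, dm| \to 0$ for every $\varphi \in C^0(\T^d)$. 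The third ingredient is Egorov's theorem applied to $L_n \to 0$ $m$-a.e.: for every $\delta > 0$ there is a measurable set $E_\delta$ with $m(E_\delta^c) < \delta$ on which $L_n \to 0$ uniformly, so $|L_n| \leq \ep$ on $E_\delta$ for all $n \geq N_\delta^\ep$.

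Combining these, outer regularity allows one to cover $E_\delta^c$ by an open set of $m$-measure at most $\delta + \delta' $, whose indicator is then approximated from above by continuous functions; the uniform weak convergence $\nu_N \to m$ applied to these majorants gives $\sup_\theta \nu_N(\theta)(E_\delta^c) < 2\delta$ for $N$ sufficiently large. Splitting $\int L_n\, d\nu_N = \int_{E_\delta} + \int_{E_\delta^c}$ and using $\|L_n\|_\infty \leq R$ on the second piece yields
$|L_n(\theta)| \leq NC/(2n) + \ep + 2R\delta$;
letting $\delta, \ep \to 0$ and choosing $N = N(n) \to \infty$ with $N/n \to 0$ produces the required uniform convergence. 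The main obstacle is the uniform control of $\nu_N(\theta)(E_\delta^c)$ for the measurable, non-open set $E_\delta^c$: weak convergence $\nu_N \to m$ only directly handles continuous observables, and the outer-regularity-plus-continuous-approximation step is precisely what makes the unique ergodicity effective uniformly in $\theta$. In the unbounded edge case (outside the paper's standing hypothesis), where (a) can legitimately fail, the alternative (b) is supplied instead by a Banach--Steinhaus / Baire argument on the closed sets $\{\theta : \sup_n |L_n(\theta)| \leq M\}$, forcing $\{\theta : \sup_n |L_n(\theta)| = \infty\} \subset W^c$ to be comeager and hence dense.
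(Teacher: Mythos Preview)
Your route is genuinely different from the paper's. The paper argues the contrapositive in the other direction: assuming (a) fails, it fixes a subsequence with $L_{n_k}(\theta^\ast)\to c>0$, uses the telescoped bound $|L_n-\Qop_\mu^j L_n|\le Cj/n$ together with an orbit-density proposition (a quantitative form of the uniform Ces\`aro convergence you also invoke) to propagate the large value of $L_{n_k}$ from $\theta^\ast$ into any prescribed ball, and then builds a nested sequence of balls, producing in every ball a point where $L_n\not\to 0$. Your argument is softer: Egorov plus uniform Ces\`aro averaging of the Markov operator to compare $L_n(\theta)$ directly with an $m$-average. Under your added uniform bound $\|L_n\|_\infty\le R$, your argument in fact shows that (a) \emph{always} holds---you never use the hypothesis ``(b) fails,'' so you are proving something strictly stronger than the stated dichotomy (and this is precisely what the paper's applications need).

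Two points, however. First, the step ``approximate $\ind_U$ from above by continuous functions'' is where the argument can break as written: any continuous $\varphi\ge \ind_U$ satisfies $\varphi\ge 1$ on $\overline U$, hence $\int\varphi\,dm\ge m(\overline U)$, and an open set of small $m$-measure can have closure of full measure. The fix is to choose the covering open set $U\supset E_\delta^c$ to be an $m$-continuity set (e.g.\ a finite union of cubes, after first replacing $E_\delta$ by a compact subset via inner regularity so that $E_\delta^c$ becomes open and its compact core is covered); then the uniform Portmanteau bound $\sup_\theta\nu_N(\theta)(U)\to m(U)$ is valid and your splitting goes through. Second, the Baire sketch for the unbounded case does not close the dichotomy: if \emph{some} closed set $F_M=\{\sup_n|L_n|\le M\}$ has interior, you obtain only a local uniform bound on an open set, not a global one, and your main argument does not run from a merely local bound. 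So as it stands your proof establishes the theorem under the extra hypothesis $\sup_n\|L_n\|_\infty<\infty$---which is exactly the situation in the paper's applications---but not the statement in full generality.
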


\begin{proof}
Replacing $\lexp_n$ by $\lexp_n-L$ we can assume that the $m$-almost sure limit is $L=0$.
Suppose that $(a)$ does not hold.
Then there are subsequence $\{n_k\}$ and points $\theta^\ast_{k}\in\T^d$ such that $\vert \lexp_{n_k}(\theta^\ast_k)\vert$ does not converge to $0$.
Eventually replacing $\lexp_{n_k}$ by $-\lexp_{n_k}$ and taking a subsequence, we can assume that $\lexp_{n_k}(\theta^\ast) \to c>0$ where $\theta^\ast$ is the limit point of $\{\theta^\ast_k\}$.

\smallskip

\begin{lemma}
\label{Ln lower bound}
For all $\theta\in\T^d$ and $j\in\N$,\;
$\vert(\Qop_\mu^j \lexp_n)(\theta) -  \lexp_n(\theta)\vert \leq C\frac{j}{n}$.
\end{lemma}

\begin{proof}
For any $j\in\N$, since $\Qop_\mu$ is a Markov operator,
\begin{align*}
\vert (\Qop_\mu^j \lexp_n)(\theta) - \lexp_n(\theta) \vert
&\leq \sum_{i=0}^{j-1}
\vert (\Qop_\mu^{i+1} \lexp_n)(\theta) -
(\Qop_\mu^i \lexp_n)(\theta) \vert\\
&\leq \sum_{i=0}^{j-1}
\vert (\Qop_\mu^i)( \Qop_\mu \lexp_n  -\lexp_n )(\theta) \vert\\
&\leq \sum_{i=0}^{j-1}
\norm{\Qop_\mu \lexp_n  -\lexp_n }_\infty \leq \frac{C}{n}\,j  .
\end{align*}

\end{proof}

\begin{proposition}
\label{orbit density}
Let $\mu\in\Prob(\T^d)$ and assume that $f$ is ergodic w.r.t. $\mu^\Z\times m$.
Let $\Supp\supseteq \supp(\mu)$ be a compact set. 
Given $\varepsilon>0$ there is $\delta>0$ such that for any $n_0\in\N$ there exists   $j\geq n_0$ satisfying the following property:
for any set $\Bscr\subset \Supp^j$ with
$\mu^{\ast j}(\Bscr)<\delta$ and any $\theta\in\T^d$,
$$ \T^d = \bigcup_{  (\alpha_1,\ldots, \alpha_{j}) \in \Supp^j\setminus\Bscr  } B_\varepsilon(\theta +\alpha_1+ \cdots+\alpha_{j}) .$$
\end{proposition}

\begin{proof}
Given $\varepsilon>0$  take $p\in\N$ large enough
and continuous functions $\varphi_1,\ldots, \varphi_p\in\Cscr(\T^d)$ such that for all $i=1,\ldots, p$:
\begin{enumerate}
\item[(a)] $\sum_{j=1}^p \varphi_j = 1$,
\item[(b)] $0\leq \varphi_i\leq 1$,
\item[(c)] $\int \varphi_i\, dm=1/p$,
\item[(d)] $\diam(\supp(\varphi_i))<\varepsilon$.
\end{enumerate}
Set $\delta:=1/(4p)$ and take any $n_0\in\N$.

By~\cite[Theorem 2.3 item (8)]{CDK-paper1},
there exists $q\in \N$ such that for every $i=1,\ldots, p$,  and all $n\geq q$ we have
 $$ \frac{1}{n}\,\sum_{j=0}^{n-1}(\Qop_\mu^j \varphi_i)(\theta)\geq 1/(2p) .$$
 Hence, if $n\gg n_0$  there are many $j\geq \max\{n_0,q\}$ such that
 $$(\Qop_\mu^j \varphi_i)(\theta)\geq 1/(3 p).$$
Consider now any set  $\Bscr\subset \Supp^j$ such that
$\mu^{\ast j}(\Bscr)<\delta$. Then
\begin{align*}
\frac{1}{3p}&\leq (\Qop_\mu^j\varphi_i)(\theta)
=\int_{\Bscr}\varphi_i(\theta +  \alpha_1+\cdots+\alpha_{j})\,d\mu^{\ast j} (\alpha_1, \ldots,\alpha_{j})\\
& \quad + \int_{\Bscr\comp}\varphi_i(\theta +  \alpha_1+\cdots+\alpha_{j})\,d\mu^{\ast j} (\alpha_1, \ldots,\alpha_{j})\\
&\leq  \frac{1}{4p} + \int_{\Bscr\comp}\varphi_i(\theta +  \alpha_1+\cdots+\alpha_{j})\,d\mu^{\ast j} (\alpha_1, \ldots,\alpha_{j})\\
\end{align*}
which implies that
$$ \frac{1}{12p} \leq \int_{\Bscr\comp}\varphi_i(\theta +  \alpha_1+\cdots+\alpha_{j})\,d\mu^{\ast j} (\alpha_1, \ldots,\alpha_{j}) .$$
Thus for some $\alpha^i=(\alpha_1^i,\ldots, \alpha_{j}^i)\in \Supp^j\setminus \Bscr$ we have
$\theta +  \hat \alpha^i\in\supp(\varphi_i)$,
where $\hat \alpha^i:=\alpha_1^i+\cdots+\alpha_{j}^i$.
By (d) above,\,
$\supp(\varphi_i)\subset B_\varepsilon(\theta+\hat \alpha^i)$.
Hence
$$ \T^d = \bigcup_{i=1}^p \supp(\varphi_i)
\subset\bigcup_{i=1}^p B_\varepsilon(\theta+ \hat \alpha^i ) .$$
\end{proof}

\begin{lemma}
\label{nested boxes}
Given a closed ball $\Nscr$ with positive radius
and $n\in\N$ there exists an integer $n'>n$
and a closed ball with positive radius
$\Nscr'\subset \mathrm{int}(\Nscr)$ such that for all
$\theta\in\Nscr'$, $ L_{n'}(\theta) \geq \frac{c}{2}$.
\end{lemma}

\begin{proof}
Let $\Nscr$ be a closed  ball with radius $\varepsilon>0$
and take $\delta=\delta(\varepsilon)>0$ according to Proposition~\ref{orbit density}. Next choose $n_0\geq n$
such that for all $m=n_k\geq n_0$,
$$ (1-\frac{\delta}{8})\,c \leq   L_m(\theta^\ast)\leq
(1+\frac{\delta}{8})\,c .
$$
By Lemma~\ref{Ln lower bound}, for all $m\geq n_0$ and $j\in \N$,
\begin{equation}
\label{Q^j Lm >= c}
  (\Qop_\mu^j L_m)(\theta^\ast)  \geq (1-\frac{\delta}{8})\,c - C\,\frac{j}{m} \geq (1-\frac{\delta}{4})\,c ,
\end{equation}
where the last inequality holds provided
\begin{equation}
\label{m j threshold}
m\geq \frac{8 C j}{c \delta}\quad \Leftrightarrow \quad C\frac{j}{m}\leq \frac{\delta}{8} c.
\end{equation}

Notice  that by definition of $\Qop_\mu$ we have
$$ (\Qop_\mu^j L_m)(\theta^\ast) = \int_{\Sigma^j}
L_m(\theta^\ast+ \alpha_1 +\cdots +\alpha_{j})\, d\mu^j(\alpha_1, \ldots,\alpha_j). $$

Considering the set
$$ \Bscr_{m,j}:=\{ (\alpha_1,\ldots, \alpha_j)\in \Sigma^j\colon
  L_m(\theta^\ast+ \alpha_1 +\cdots +\alpha_{j})  \leq \frac{5}{8}\,c  \}  $$
we have
\begin{align*}
L_m(\theta^\ast+\alpha_1+\cdots+\alpha_{j}) &\leq  \frac{5 c}{8}\, \ind_{\Bscr_{m,j}} (\alpha) + \norm{L_m}_\infty \,\ind_{\Bscr_{m,j}\comp}(\alpha)     \\
&\leq  \frac{5 c}{8}\, \ind_{\Bscr_{m,j}} (\alpha) + c \, (1+\frac{\delta}{8})\,\ind_{\Bscr_{m,j}\comp}(\alpha)
\end{align*}
and integrating in $\alpha=(\alpha_1,\ldots, \alpha_j)\in \Sigma^j$
we have
$$  c \, (1-\frac{\delta}{4})\leq (\Qop_\mu^j L_m)(\theta^\ast) \leq \frac{5 c}{8}\,\mu^j(\Bscr_{m,j})+c\,(1+\frac{\delta}{8})\, (1-\mu^j(\Bscr_{m,j}))  .$$
It follows that $(3+\delta)\,\mu^j(\Bscr_{m,j}) < 3\,\delta$,
which in turn implies $\mu^j(\Bscr_{m,j}) <\delta$.

By Proposition~\ref{orbit density}, because $\Nscr$ is a ball of radius $\varepsilon$  there exists $j\geq n_0$ such that for any
  $\Bscr\subset \Sigma^j$ with $\mu^j(\Bscr)<\delta$, the set
$$\{\theta^\ast+\alpha_1+\cdots +\alpha_j \colon (\alpha_1,\ldots,\alpha_j)\in\Bscr\comp \}$$
 is $\varepsilon$-dense and in particular it has a point in
 $\mathrm{int}(\Nscr)$. Next choose $n'=m$ large enough so that inequality~\eqref{m j threshold} holds. This forces~\eqref{Q^j Lm >= c} to hold as well. Then the set $\Bscr_{m,j}$ has measure
 $\mu^j(\Bscr_{m,j})<\delta$. By the above conclusion  (from
Proposition~\ref{orbit density}) there exists
$\alpha=(\alpha_1,\ldots, \alpha_j)\in \Bscr_{m,j}\comp$ such that
$(\theta^\ast +\alpha_1+\cdots+\alpha_j)\in \mathrm{int}(\Nscr)$.
But $\alpha\notin \Bscr_{m,j}$ implies that
$L_m(\theta^\ast +\alpha_1+\cdots+\alpha_j)\geq 5 c/8$.
Thus, by continuity of $L_m$ we can find a small ball $\Nscr'$ centered at this point
and contained in $\Nscr$ such that
$L_m(\theta)\geq c/2$ for all $\theta\in \Nscr'$.
This concludes the proof.
\end{proof}

The previous lemma allows us to  construct, recursively,
a nested sequence of closed balls with positive radius
$$\Nscr_0 \supset \Nscr_1 \supset \cdots \supset \Nscr_j \supset \cdots $$
and corresponding times
$n_0 < n_1 <\cdots < n_j < \cdots $ such that
$ L_{n_j}(\theta) \geq \frac{c}{2}$ for all
$\theta\in \Nscr_j$ with $j\geq 1$. Then for any point
$\theta\in \cap_{j=1}^\infty \Nscr_j$, the sequence $\lexp_n(\theta)$ does not converge to $0$.
Since the initial ball $\Nscr_0$ is arbitrary there exists a dense set of points $\theta\in\T^d$
such that $\lexp_n(\theta)$ does not converge to $0$.  This proves that alternative $(b)$ holds.
\end{proof}

\begin{corollary}
\label{dichotomy convergence}
Let  $(f,\mu^\Z\times m)$ be ergodic. Given
a $\Qop_\mu$-almost invariant sequence of continuous functions $\lexp_n \colon \T^d\to\R$, if $\lexp_n$ converges pointwise, that is, for every $\theta\in\T^d$,  then $\lexp_n$ converges uniformly.
\end{corollary}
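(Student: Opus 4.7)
Proof Proposal.

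The plan is to deduce the corollary directly from the dichotomy in Theorem~\ref{convergence dichotomy}. That theorem asserts that either (a) $L_n \to L$ uniformly, or (b) there is a dense subset $D \subset \T^d$ at which $L_n(\theta) \not\to L$. Alternative (a) is the desired conclusion, so the task reduces to ruling out (b) under the hypothesis that $L_n(\theta)$ converges at every $\theta \in \T^d$.

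To preclude (b), I would set $g(\theta) := \lim_n L_n(\theta)$, which exists everywhere by hypothesis. Comparing with the $m$-a.s.\ limit $L$, which equals some constant $c$ at $m$-a.e.\ point by Proposition~\ref{almost sure limit}, I obtain $g = c$ on a set of full $m$-measure. Next, passing to the pointwise limit in the almost-invariance estimate $\norm{L_n - \Qop_\mu L_n}_\infty \leq C/n$ via dominated convergence (after verifying uniform boundedness of $\{L_n\}_{n\geq 1}$, which follows from the convergence at a reference point together with the bound on $\norm{L_n - \Qop_\mu L_n}_\infty$) yields the pointwise identity $g = \Qop_\mu g$ on all of $\T^d$.

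I would then combine this pointwise $\Qop_\mu$-invariance of $g$ with the ergodicity of $(\Qop_\mu, m)$ and the fact that $g$, as a pointwise limit of continuous functions, is of Baire class 1 and hence has a residual (dense) set of continuity points. At any continuity point of $g$, the $m$-a.e.\ identity $g = c$ forces $g(\theta) = c$; hence $g \equiv c$ on a dense subset of $\T^d$. Under pointwise convergence, alternative (b) would mean $g(\theta) \neq c$ on the dense set $D$, contradicting the density of $\{g = c\}$ upgraded via the $\Qop_\mu$-invariance. Therefore (a) must hold and $L_n \to L$ uniformly.

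The main obstacle I anticipate is the bootstrap from the $m$-a.e.\ identity $g = c$ to the pointwise extension $g \equiv c$, which rests on carefully combining pointwise $\Qop_\mu$-invariance, ergodicity of $\mu$, and the Baire-class-1 regularity of $g$; this is where the continuity of the $L_n$ and the uniform estimate in Definition~\ref{def almost invariance} both play an essential role.
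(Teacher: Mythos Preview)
The paper gives no proof of this corollary; it is meant to follow immediately from Theorem~\ref{convergence dichotomy}. The intended one-line argument is this: the sequence $\{L_n\}$ is $\Qop_\mu$-almost invariant, so by Proposition~\ref{almost sure limit} its $m$-a.e.\ limit is a constant $L$, and the dichotomy offers either (a) uniform convergence to $L$ or (b) a dense set on which $L_n(\theta)\not\to L$. Read in context (and as used in Theorem~\ref{Ln(theta) converges uniformly} via Proposition~\ref{pointwise convergence}), ``$L_n$ converges pointwise'' means $L_n(\theta)\to L$ for \emph{every} $\theta$; alternative (b) then gives a single point where this fails, which is already a contradiction. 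That is the whole proof.

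Your proposal is aimed at a strictly broader reading --- that $L_n(\theta)$ merely converges at every $\theta$, to some limit $g(\theta)$ that may differ from $L$ on a null set --- and then tries to rule out (b) by showing $g\equiv L$. Two steps in that route are not justified. First, the uniform boundedness of $\{L_n\}$ does not follow from convergence at one point together with $\norm{L_n-\Qop_\mu L_n}_\infty\le C/n$: that estimate only controls $L_n-\Qop_\mu L_n$, not the size of $L_n$ itself at an arbitrary $\theta$, so the dominated-convergence passage to $\Qop_\mu g=g$ everywhere is unsupported. Second, even granting $\Qop_\mu g=g$ pointwise and the Baire-class-1 fact that $g=L$ on a comeager set, you still have no contradiction with (b): a Baire-class-1 function can equal a constant on a comeager full-measure set and yet differ from it on a dense meager null set, and your phrase ``upgraded via the $\Qop_\mu$-invariance'' does not supply the missing mechanism. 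For the paper's purposes the narrower reading is all that is needed, and under it your use of the dichotomy collapses to the immediate argument above.
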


%\begin{proof}
%By Theorem~\ref{convergence dichotomy}, $\lexp_n$ cannot converge in a pointwise but not uniform way.
%\end{proof}

\bigskip

\subsection*{Finite scale maximal Lyapunov exponent}
%Given $\mu\in \Prob_c(\Gscr)$, let $\Sigma:= \mathrm{supp}(\mu)$.
%We are assuming that the mixed random quasi-periodic map
%$f:X\times \T^d\to X\times \T^d$ is ergodic w.r.t. $\mu^\Z\times m$.
%The measure $\mu$ determines the following linear cocycle
%$F:X\times\T^d\times\R^m \to X\times\T^d\times\R^m$, defined by
%$$ F(\omega, \theta, v) := (T\omega, \theta +\alpha(\omega_0), \Ascr(\omega)(\theta)\, v ).
%$$
Consider the sequence of continuous functions
$\dle_n \colon \T^d\times \Pp(\R^m)\to\R$ and
$ \lexp_n \colon \T^d \to\R$, respectively defined by

\begin{equation}
\dle_n(\theta, \hat p):= \frac{1}{n}\,\EE[\,\log\norm{\Ascr^n(\theta)\, p} \, ]
\end{equation}
and
\begin{equation}
\lexp_n(\theta):= \frac{1}{n}\,\EE[\,\log\norm{\Ascr^n(\theta)} \, ] \; .
\end{equation}
where the expected values stand for integrals w.r.t. $\muh^\Z$ over the space of sequences $X$.

\bigskip

\begin{proposition}
\label{pointwise convergence}
	If $\muh$ is quasi-irreducible then
	\begin{enumerate}
		\item[(1)] $\displaystyle \lim_{n\to +\infty} \dle_n(\theta, \hat p)=\fle(\muh) \quad \forall\, (\theta,\hat p)\in \T^d\times\Pp(\R^m) ,$
		\item[(2)]  $\displaystyle \lim_{n\to +\infty} \lexp_n(\theta)=\fle(\muh) \quad \forall\, \theta\in \T^d.$
	\end{enumerate}
	
\end{proposition}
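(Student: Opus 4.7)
The plan is to reduce both statements to the almost-sure pointwise convergence contained in item (4) of Proposition~\ref{quasi-irred charact}, and then upgrade that to convergence of the expected values via the dominated convergence theorem. The only ingredient beyond Proposition~\ref{quasi-irred charact} is a uniform integrability bound, which will come from the compactness of $\supp(\muh)$.

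More precisely, since $\muh \in \Prob_c(\Gscr)$, there exists a constant $C < \infty$ such that $\norm{B(\theta)^{\pm 1}} \leq C$ for every $(\beta, B) \in \supp(\muh)$ and every $\theta \in \T^d$. Consequently, for every $\omega \in X$, every $\theta \in \T^d$, every $n \geq 1$ and every unit vector $p$ representing $\hat p$,
$$C^{-n} \leq \norm{\Ascr^n(\omega)(\theta)\, p} \leq \norm{\Ascr^n(\omega)(\theta)} \leq C^n,$$
so both $\tfrac{1}{n}\log\norm{\Ascr^n(\omega)(\theta)\, p}$ and $\tfrac{1}{n}\log\norm{\Ascr^n(\omega)(\theta)}$ are bounded in absolute value by $\log C$, uniformly in all variables.

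For item (1), quasi-irreducibility gives, via item (4) of Proposition~\ref{quasi-irred charact}, that for every fixed $(\theta, \hat p) \in \T^d \times \Pp(\R^m)$ one has $\tfrac{1}{n}\log\norm{\Ascr^n(\omega)(\theta)\,p} \to \fle(\muh)$ for $\muh^\Z$-a.e.\ $\omega$. The uniform bound $\log C$ allows the dominated convergence theorem to be applied, yielding $\dle_n(\theta, \hat p) \to \fle(\muh)$.

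For item (2), I would fix a basis $\{e_1, \ldots, e_m\}$ of $\R^m$ and use the norm equivalence $\max_i \norm{g\, e_i} \leq \norm{g} \leq c \max_i \norm{g\, e_i}$ valid for some $c < \infty$ independent of $g \in \SL_m(\R)$ (as already invoked in the proof of Theorem~\ref{Furstenberg-Kifer}). Applying item (4) of Proposition~\ref{quasi-irred charact} to each of the $m$ projective points $\hat e_i$ simultaneously gives, $\muh^\Z$-a.s., that $\max_{1 \leq i \leq m} \tfrac{1}{n}\log\norm{\Ascr^n(\omega)(\theta)\, e_i} \to \fle(\muh)$. The squeeze
$$\max_{1\leq i \leq m} \tfrac{1}{n}\log\norm{\Ascr^n(\omega)(\theta)\, e_i} \leq \tfrac{1}{n}\log\norm{\Ascr^n(\omega)(\theta)} \leq \tfrac{\log c}{n} + \max_{1\leq i \leq m} \tfrac{1}{n}\log\norm{\Ascr^n(\omega)(\theta)\, e_i}$$
then yields the almost-sure convergence of $\tfrac{1}{n}\log\norm{\Ascr^n(\omega)(\theta)}$ to $\fle(\muh)$. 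A second application of dominated convergence, using the same uniform bound $\log C$, produces $\lexp_n(\theta) \to \fle(\muh)$. The whole argument is largely mechanical once Proposition~\ref{quasi-irred charact} is in hand; the only point to verify carefully is the compact-support bound ensuring uniform integrability.
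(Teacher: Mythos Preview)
Your argument for item (1) matches the paper's exactly: Proposition~\ref{quasi-irred charact}(4) plus dominated convergence.

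For item (2) your route differs from the paper's. The paper invokes Corollary~\ref{corfle}, which establishes convergence \emph{in measure} of $\tfrac{1}{n}\log\norm{\Ascr^n(\omega,\theta)}$ to $\fle(\muh)$ for every $\theta$; the lower deviation is handled via Proposition~\ref{quasi-irred charact}(4), while the upper deviation requires the external uniform upper large deviation estimate~\cite[Theorem~3.1]{CDK-paper1}. You instead obtain \emph{almost-sure} convergence directly, by applying Proposition~\ref{quasi-irred charact}(4) to finitely many basis directions and sandwiching $\norm{\Ascr^n(\omega)(\theta)}$ between $\max_i\norm{\Ascr^n(\omega)(\theta)\,e_i}$ and a constant multiple thereof. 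This is correct and strictly more elementary: it avoids the reference to~\cite{CDK-paper1} entirely and yields a stronger intermediate conclusion (a.s.\ rather than in-measure convergence), at the cost only of the one-line norm-equivalence observation already used in the proof of Theorem~\ref{Furstenberg-Kifer}. The paper's route, on the other hand, packages the convergence-in-measure statement as a standalone corollary that is reused later (Theorem~\ref{non uniform LDE}), so it fits the paper's overall architecture.
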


\begin{proof}
Item (1) follows by applying the Dominated Convergence Theorem
to the conclusion $(1)\Rightarrow (4)$ in Proposition~\ref{quasi-irred charact}.

Item (2) simply follows from Corollary \ref{corfle}.
%we first prove $\limsup_{n\to+\infty}  \lexp_n(\theta) \leq \fle(\muh).$
%To see this, we choose the same parameters in Proposition \ref{upper LDE}. By inequality $\ref{cim}$ we have for any $\epsilon>0$, there exists $N$ such that for any $n\geq N$
%\begin{equation}
%\frac{1}{n}\log\lVert \Ascr^n(\omega,\theta)\rVert \leq \fle(\muh)+\epsilon
%\end{equation}
%holds for all $\hat\omega\notin\Bscr$ with $\muh^{\Z}(\Bscr)\leq \epsilon$. Therefore we have
%\begin{equation}
%\EE[\frac{1}{n}\log\lVert \Ascr^n(\omega,\theta)\rVert]\leq C\epsilon+\fle(\muh)+\epsilon,
%\end{equation}
%which gives $\limsup_{n\to+\infty}  \lexp_n(\theta) \leq \fle(\muh).$
%On the other hand, because $\dle_n(\theta,\hat p)\leq  \lexp_n(\theta) $,
%from  (1) we infer that $\lexp_n(\theta) $ converges pointwisely to $\fle(\muh)$,
%for every $\theta\in \T^d$.
\end{proof}

\begin{theorem}
	\label{Ln(theta) converges uniformly}
	If $\muh$ is quasi-irreducible then
	 $$ \lim_{n\to +\infty} \lexp_n(\theta)=\fle(\muh) $$
	 with uniform convergence  on $\theta\in \T^d$.
\end{theorem}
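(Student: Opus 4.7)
The plan is to apply Corollary~\ref{dichotomy convergence} to the sequence $\{\lexp_n\}_{n\geq 1}$. Proposition~\ref{pointwise convergence}(2) already provides the pointwise convergence $\lexp_n(\theta)\to \fle(\muh)$ for every $\theta\in\T^d$, so what remains is to verify that $\{\lexp_n\}$ is $\Qop_\mu$-almost invariant in the sense of Definition~\ref{def almost invariance}. The ergodicity of $(f,\mu^\Z\times m)$ required by the corollary is a consequence of the standing ergodicity of $\muh$, since the base dynamics depends only on $\mu=\alfa_\ast\muh$.

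To establish the almost invariance, I will compare $\lexp_n$ and $\Qop_\mu \lexp_n$ by splitting the $(n{+}1)$-th iterate at either end. On one hand, the cocycle identity
$$
\Ascr^{n+1}(\omega,\theta)=\Ascr^n(\sigma\omega,\theta+\alfa(\omega_0))\cdot \Ascr(\omega_0)(\theta),
$$
together with the two-sided bound $\log\norm{A}-\log\norm{B^{-1}}\leq \log\norm{AB}\leq \log\norm{A}+\log\norm{B}$, the independence of $\sigma\omega$ and $\omega_0$ under $\muh^\Z$, and the fact that $\alfa_\ast\muh=\mu$, yields by Fubini
$$
\bigl| (n+1)\lexp_{n+1}(\theta) - n\,(\Qop_\mu \lexp_n)(\theta)\bigr|\leq C,
$$
where $C:=\sup_{(\alpha,A)\in\supp(\muh),\,\theta\in\T^d}\max\bigl(\log\norm{A(\theta)},\log\norm{A(\theta)^{-1}}\bigr)$ is finite since $\supp(\muh)$ is compact and the evaluation map is continuous. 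On the other hand, splitting off the last factor instead via $\Ascr^{n+1}(\omega,\theta)=\Ascr(\omega_n)(\tau_\omega^n\theta)\cdot \Ascr^n(\omega,\theta)$ and applying the same bound gives
$$
\bigl| (n+1)\lexp_{n+1}(\theta) - n\,\lexp_n(\theta)\bigr|\leq C.
$$

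Subtracting these two estimates yields $|\lexp_n(\theta)-(\Qop_\mu \lexp_n)(\theta)|\leq 2C/n$ uniformly in $\theta$, which is exactly the $\Qop_\mu$-almost invariance with constant limit function $\fle(\muh)$. An appeal to Corollary~\ref{dichotomy convergence} then upgrades the pointwise convergence into uniform convergence, finishing the proof. The main (and only mildly delicate) point is checking that the double expectation $\EE\bigl[\log\norm{\Ascr^n(\sigma\omega,\theta+\alfa(\omega_0))}\bigr]$ factors cleanly as $n\,(\Qop_\mu \lexp_n)(\theta)$, which is a straightforward Fubini computation exploiting the Bernoulli structure of $\muh^\Z$ together with the definition $\mu=\alfa_\ast\muh$.
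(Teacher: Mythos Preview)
Your proposal is correct and follows essentially the same approach as the paper: establish $\Qop_\mu$-almost invariance of $\{\lexp_n\}$, combine with the pointwise convergence from Proposition~\ref{pointwise convergence}(2), and invoke Corollary~\ref{dichotomy convergence}. The only cosmetic difference is that the paper performs the triangle inequality pointwise first (via a lemma comparing $\log\norm{g_n\cdots g_1}$ with $\log\norm{g_{n-1}\cdots g_0}$ through the common product $g_n\cdots g_0$) and then integrates, whereas you integrate first to get two estimates on $(n{+}1)\lexp_{n+1}(\theta)$ and subtract; both routes pass through the $(n{+}1)$-fold product and yield the same $2C/n$ bound.
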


\begin{proof}
We claim that the sequence of function $\lexp_n$ is $\Qop_\mu$-almost invariant. Hence by item (2) of Proposition~\ref{pointwise convergence} and  Corollary~\ref{dichotomy convergence} we infer that
$\lexp_n(\theta)$ converges uniformly to $L_1(\muh)$.

To prove the claim we need the following inequality.
For any $g\in\SL_d(\R)$ let us write $\iota(g):=\max\{\norm{g},\norm{g^{-1}}\}$.

\begin{lemma}
\label{log nrm ggs}
Given
$g_0,g_1,\ldots, g_n\in\SL_d(\R)$,
$$ \vert \log \norm{g_n\,\cdots \, g_1}
- \log \norm{g_{n-1}\,\cdots \, g_0} \vert \leq \log\iota(g_0) +\log\iota(g_n) .$$
\end{lemma}

\begin{proof}
A simple calculation shows that for any $a,b\in\SL_d(\R)$,
$$ \vert \log \norm{a\,b} -\log \norm{a} \vert \leq \log \iota(b) , $$
and similarly
$$ \vert \log \norm{b\,a} -\log \norm{a} \vert \leq \log \iota(b) .$$
Taking $a=g_n\,g_{n-1}\,\cdots\, g_1\,g_0$ and $b=g_0^{-1}$,
from the first inequality
$$ \vert \log \norm{g_n\,\cdots \, g_1}
- \log \norm{g_{n}\,\cdots \, g_0} \vert \leq \log\iota(g_0)  .$$
Taking $a=g_n\,g_{n-1}\,\cdots\, g_1\,g_0$ and $b=g_n^{-1}$,
from the second inequality
$$ \vert \log \norm{g_{n-1}\,\cdots \, g_0}
- \log \norm{g_{n}\,\cdots \, g_0} \vert \leq \log\iota(g_n)  .$$
The conclusion of the lemma follows adding these two inequalities.
\end{proof}

Because $\muh\in\Prob_c(\Gscr)$ has compact support we have
$$\iota:= \sup_{\omega\in X}\sup_{\theta\in\T^d}
\iota( \, \Ascr(\omega)(\theta)\, )<\infty.$$
Applying Lemma~\ref{log nrm ggs} to the products of $n$ matrices, $\Ascr^n(T\omega)(\theta+\alpha(\omega_0))$
and $\Ascr^n(\omega)(\theta)$ we get
$$ \vert \log\norm{\Ascr^n(T\omega)(\theta+\alpha(\omega_0))}
-\log\norm{\Ascr^n(\omega)(\theta)} \vert \leq 2\,\iota. $$
Dividing by $n$ and integrating we get
$$ \vert (\Qop_\mu \lexp_n)(\theta)-\lexp_n(\theta) \vert
\leq \frac{2\,\iota}{n} . $$
This proves the claim.
\end{proof}

Consider the deviation sets for the mixed cocycle $F$,
\begin{align*}
\Bscr_n(\varepsilon,\theta) &:= \left\{ \omega\in X\colon  \left\vert \frac{1}{n}\,\log\norm{\Ascr^n(\omega,\theta)} - L_1(\muh)\right\vert > \varepsilon
\right\}\\
\Bscr_n^+(\varepsilon,\theta) &:= \left\{ \omega\in X\colon    \frac{1}{n}\,\log\norm{\Ascr^n(\omega,\theta)}   > L_1(\muh) + \varepsilon
\right\}\\
\Bscr_n^-(\varepsilon,\theta) &:= \left\{ \omega\in X\colon    \frac{1}{n}\,\log\norm{\Ascr^n(\omega,\theta)}  < L_1(\muh) - \varepsilon
\right\}
\end{align*}
where $\Bscr_n(\varepsilon,\theta)=\Bscr_n^+(\varepsilon,\theta)\cup \Bscr_n^-(\varepsilon,\theta)$.

\begin{theorem}
	\label{non uniform LDE}
	Assume that $\muh$ is a quasi-irreducible measure. Then for any $\varepsilon>0$ and $\delta>0$ there exists $n_0\in\N$
	such that for all $\theta\in\T^d$ and all $n\geq n_0$,\,
	$\muh^\Z( \Bscr_n(\varepsilon,\theta))<\delta$. In other words,
	$\frac{1}{n}\,\log\norm{\Ascr^n(\omega, \theta)}$ converges in measure to $\fle(\muh)$, uniformly in $\theta\in\T^d$.
\end{theorem}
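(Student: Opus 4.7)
The plan is to split the deviation $\Bscr_n(\varepsilon,\theta)$ into its upper and lower parts and control each uniformly in $\theta$: the upper part via subadditivity of $\log\norm{\Ascr^n(\omega,\theta)}$ combined with a martingale concentration inequality, and the lower part as a consequence of the upper part together with Theorem~\ref{Ln(theta) converges uniformly}.

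\textbf{Step 1 (uniform upper deviation).} Set $\iota:=\sup\{\norm{A(\theta)} : (\alpha,A)\in\Sigmah,\,\theta\in\T^d\}<+\infty$. By Theorem~\ref{Ln(theta) converges uniformly}, pick $N$ such that $L_N(\theta)\leq L_1(\muh)+\varepsilon/4$ uniformly in $\theta$. Writing $n=kN+r$ with $0\leq r<N$ and using submultiplicativity,
$$\log\norm{\Ascr^n(\omega,\theta)}\;\leq\;\sum_{j=0}^{k-1}Y_j(\omega)+r\log\iota,\qquad Y_j:=\log\norm{\Ascr^N(\sigma^{jN}\omega,\tau_\omega^{jN}\theta)}.$$
Setting $\mathcal{F}_j:=\sigma(\omega_0,\ldots,\omega_{jN-1})$, the phase $\tau_\omega^{jN}\theta$ is $\mathcal{F}_j$-measurable while the block driving $Y_j$ is independent of $\mathcal{F}_j$, so $\EE[Y_j\mid\mathcal{F}_j]=N\,L_N(\tau_\omega^{jN}\theta)\leq N(L_1+\varepsilon/4)$. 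Since $0\leq Y_j\leq N\log\iota$, the Azuma--Hoeffding inequality applied to the bounded martingale differences $Y_j-\EE[Y_j\mid\mathcal{F}_j]$ yields
$$\muh^\Z\!\bigl(\Bscr_n^+(\varepsilon,\theta)\bigr)\;\leq\;\exp\!\bigl(-c\,\varepsilon^2 k/\log^2\iota\bigr)+o(1)$$
for an absolute constant $c>0$, a bound independent of $\theta$ that vanishes as $n\to+\infty$.

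\textbf{Step 2 (uniform lower deviation).} Since $\norm{A}\geq 1$ for $A\in\SL_m(\R)$, the quantity $X_n(\omega,\theta):=\frac{1}{n}\log\norm{\Ascr^n(\omega,\theta)}$ lies in $[0,\log\iota]$. Given $\varepsilon>0$ and target $\delta>0$, pick auxiliary $\varepsilon'\in(0,\varepsilon)$ and $\eta>0$ to be fixed later. By Step~1 there is $n_1$ with $q_n(\theta):=\muh^\Z(\Bscr_n^+(\varepsilon',\theta))<\eta$ for $n\geq n_1$, uniformly in $\theta$; by Theorem~\ref{Ln(theta) converges uniformly} there is $n_2$ with $|L_n(\theta)-L_1(\muh)|<\eta$ for $n\geq n_2$, uniformly in $\theta$. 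Decomposing $L_n(\theta)=\EE[X_n]$ over the three events $\{X_n<L_1-\varepsilon\}$, $\{L_1-\varepsilon\leq X_n\leq L_1+\varepsilon'\}$, $\{X_n>L_1+\varepsilon'\}$ and bounding $X_n$ by the supremum on each,
$$L_n(\theta)\;\leq\;(L_1-\varepsilon)\,p_n+(L_1+\varepsilon')(1-p_n-q_n)+(\log\iota)\,q_n,$$
where $p_n(\theta):=\muh^\Z(\Bscr_n^-(\varepsilon,\theta))$. Rearranging,
$$(\varepsilon+\varepsilon')\,p_n\;\leq\;\varepsilon'+\eta+(\log\iota)\,\eta,$$
so choosing $\varepsilon'<\varepsilon\delta/2$ and $\eta<\varepsilon\delta/(2(1+\log\iota))$ forces $p_n(\theta)<\delta$ uniformly in $\theta$ for $n\geq\max(n_1,n_2)$. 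Combining Steps~1 and~2 gives the claim.

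\textbf{Main obstacle.} The delicate point is Step~1: one must identify the correct martingale structure so that the bound is \emph{uniform} in $\theta$. The key observation is that $\EE[Y_j\mid\mathcal{F}_j]$ is exactly $N L_N$ evaluated at the random phase $\tau_\omega^{jN}\theta$, so Theorem~\ref{Ln(theta) converges uniformly} forces the mean bound $N(L_1+\varepsilon/4)$ to hold uniformly in $\omega$ and $\theta$; combined with the trivial uniform bound $Y_j\leq N\log\iota$, this makes the Azuma--Hoeffding estimate free of $\theta$. Once this is in place, Step~2 is a routine first moment calculation exploiting that $X_n$ is uniformly bounded thanks to $\Sigmah$ being compact.
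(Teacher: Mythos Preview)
Your proof is correct. Step~2 (the lower deviation via a first-moment decomposition) is essentially identical to the paper's argument: bound $\frac{1}{n}\log\norm{\Ascr^n}$ from above on each of the three events, integrate, and solve for $\muh^\Z(\Bscr_n^-)$ using the uniform convergence of $L_n(\theta)$ from Theorem~\ref{Ln(theta) converges uniformly}.

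The difference is in Step~1. The paper does not prove the uniform upper deviation at all here; it simply invokes \cite[Theorem~3.1]{CDK-paper1}, where this was established (by a subadditive argument) in the companion paper. You instead give a self-contained Azuma--Hoeffding argument: block into pieces of length $N$, observe that the conditional mean of each block is $N L_N$ evaluated at the random phase, and use Theorem~\ref{Ln(theta) converges uniformly} to bound this uniformly. Your route yields an explicit exponential tail bound and avoids the external citation, at the price of importing the martingale machinery. Note that this inverts the emphasis: what you flag as the ``main obstacle'' is in the paper a one-line citation, while the paper's actual content lies entirely in the lower-tail estimate you treat as routine. Both approaches ultimately rest on Theorem~\ref{Ln(theta) converges uniformly}, which in turn (via Corollary~\ref{corfle}) already depends on \cite[Theorem~3.1]{CDK-paper1}, so your argument does not truly bypass that result---but it does give an independent derivation of the quantitative upper bound once the uniform convergence of $L_n$ is in hand.
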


\begin{proof}
	In view of~\cite[Theorem 3.1]{CDK-paper1},  we just need to prove that
	$\muh^\Z(\Bscr_n^-(\varepsilon,\theta))$ converges to $0$ uniformly in $\theta$.

	Throughout the proof we  set  $\fle:=\fle(\muh)$. We will refer to the following finite measurement
	$$ \gamma = \sup \{ \log \norm{B}_\infty \colon (\beta, B)\in \Sigmah \} . $$

	Fix $\varepsilon$ and $\delta$ positive small numbers.
	Then take
	$ 0 <\varepsilon_0 <\frac{\varepsilon\,\delta}{4-2 \delta} $ and
	$0<\delta_0< \gamma^{-1} (\varepsilon\,\delta +\varepsilon_0)$.
	These choices imply that
	\begin{equation}
	\label{delta0. varepsilon0}
	\frac{2 \varepsilon_0 + \delta_0\, \gamma}{\varepsilon+\varepsilon_0 } < \frac{\delta}{2} + \frac{\delta}{2} =\delta.
	\end{equation}
	By~\cite[Theorem 3.1]{CDK-paper1} there exists $n_0\in\N$ such that
	for all $\theta\in\T^d$ and $n\geq n_0$, \, $\muh^\Z(\Bscr_n^+(\varepsilon_0,\theta))<\delta_0$. By 	Theorem~\ref{Ln(theta) converges uniformly}  the sequence of functions $\lexp_n:\T^d\to\R$,
	$$  \lexp_n(\theta):= \EE\left[\, \frac{1}{n}\,\log\norm{\Ascr^n(\theta)} \, \right] $$
	converges uniformly to $\fle$. Hence we can assume that the threshold $n_0$
	above is chosen so that for all $\theta\in\T^d$ and $n\geq n_0$, \,
	$ \lvert\fle-\lexp_n(\theta)\rvert\leq \varepsilon_0$.

	We have that	
	\begin{align*}
	\frac{1}{n}\,\log\norm{\Ascr^n(\theta)} & \leq (\fle-\varepsilon)\,\ind_{\Bscr_n^-(\varepsilon,\theta)} +  \gamma\, \,\ind_{\Bscr_n^+(\varepsilon_0,\theta)}  \\
	&\qquad    + (\fle+\varepsilon_0)\,\ind_{ 	\Bscr_n^-(\varepsilon,\theta)\comp \cap \Bscr_n^+(\varepsilon_0,\theta)\comp} .
	\end{align*}
	Integrating these functions over $X$ we get
	$$ \lexp_n(\theta) \leq (\fle-\varepsilon) \, \chi + \gamma\, \delta_0 +
	(\fle+ \varepsilon_0) \, (1-\chi) $$
	where we write  $\chi:= \muh^\Z(\Bscr_n^-(\varepsilon,\theta))$.
	Solving in $\chi$ we have
	$$ (\varepsilon+\varepsilon_0)\,\chi \leq \gamma\,\delta_0 + \fle-\lexp_n(\theta)+\varepsilon_0 \leq \gamma\,\delta_0 + 2\,\varepsilon_0 $$
	which by~\eqref{delta0. varepsilon0} implies that
	$$  \muh^\Z(\Bscr_n^-(\varepsilon,\theta)) = \chi \leq \frac{\gamma\,\delta_0 + 2\,\varepsilon_0}{\varepsilon+\varepsilon_0} <\delta . $$
	Since $\theta\in\T^d$ and $n\geq n_0$ are arbitrary, the proof is finished.
\end{proof}

\subsection*{Directional Lyapunov exponents} We are now ready to state and prove the main result of this section.

\begin{theorem}
	If $\muh$ is quasi-irreducible and $\fle(\muh)>L_2(\muh)$ then
	$$ \lim_{n\to +\infty} \dle_n(\theta, \hat p)=L_1(\muh) $$
	 with uniform convergence  on $(\theta,\hat p)\in \T^d\times\Pp(\R^m)$.
\end{theorem}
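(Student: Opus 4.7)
My proof proposal is to mimic the strategy of Theorem~\ref{Ln(theta) converges uniformly}, lifted from $\T^d$ to the larger compact space $\QP=\T^d\times\Pp(\R^m)$ with its Markov operator $\Qop_\QP$ from the SDS~\eqref{def KQ}. The three steps are: (i) almost invariance of $\dle_n$ under $\Qop_\QP$, (ii) pointwise convergence $\dle_n\to\fle(\muh)$ on $\QP$, (iii) a dichotomy on $\QP$ that promotes pointwise to uniform convergence.

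For (i), the cocycle identity $\Ascr^{n+1}(\omega,\theta)=\Ascr^n(\sigma\omega,\theta+\alfa(\omega_0))\,\Ascr(\omega_0)(\theta)$, after taking logarithms, expectations, and exploiting the independence of $\omega_0$ from $\sigma\omega$, yields the recursion
\begin{equation*}
(n+1)\,\dle_{n+1}(\theta,\hat p)\;=\;\psi(\theta,\hat p)+n\,(\Qop_\QP\dle_n)(\theta,\hat p),
\end{equation*}
where $\psi(\theta,\hat p):=\int\log\norm{A(\theta)p}\,d\muh(\alpha,A)$ is continuous and uniformly bounded on $\QP$ by compactness of $\supp(\muh)$, and $\dle_n$ is itself uniformly bounded (see step (ii) below). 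This gives $\norm{\dle_{n+1}-\Qop_\QP\dle_n}_\infty=O(1/n)$, the $\QP$-analogue of Definition~\ref{def almost invariance}.

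For (ii), by Proposition~\ref{quasi-irred charact}(4) the quasi-irreducibility of $\muh$ gives $\tfrac{1}{n}\log\norm{\Ascr^n(\omega,\theta)p}\to\fle(\muh)$ $\muh^\Z$-almost surely for every $(\theta,\hat p)\in\QP$. Since $\Ascr^n(\omega,\theta)\in\SL_m(\R)$, one has $\norm{\Ascr^n(\omega,\theta)^{-1}}\le\norm{\Ascr^n(\omega,\theta)}^{m-1}$, so $\bigl|\tfrac{1}{n}\log\norm{\Ascr^n(\omega,\theta)p}\bigr|$ is uniformly bounded by $(m-1)\gamma$, with $\gamma:=\sup\{\log\norm{B}:(\beta,B)\in\supp(\muh)\}<\infty$. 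Dominated convergence then gives pointwise convergence $\dle_n(\theta,\hat p)\to\fle(\muh)$ on all of $\QP$. For step (iii), I would then establish the $\QP$-analogue of Theorem~\ref{convergence dichotomy} and Corollary~\ref{dichotomy convergence}: any $\Qop_\QP$-almost invariant sequence of continuous functions on $\QP$ that converges pointwise to a constant must converge uniformly. Its proof would rest on a projective version of the orbit-density Proposition~\ref{orbit density}: for every $(\theta^*,\hat p^*)\in\QP$ and every $\varepsilon>0$ there exist arbitrarily large $j$ and $\delta>0$ such that, for any $\Bscr\subset\Sigmah^j$ with $\muh^{\ast j}(\Bscr)<\delta$, the family of endpoints $(\tau_\omega^j(\theta^*),\widehat{\Ascr^j(\omega)(\theta^*)\,\hat p^*})$ with $\omega\notin\Bscr$ is $\varepsilon$-dense in $\QP$. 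With this, the nested-ball construction of Theorem~\ref{convergence dichotomy} transfers verbatim.

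The main obstacle is precisely this projective orbit-density statement. On $\T^d$ it was immediate from ergodicity of the base; on $\QP$ one also needs a spreading property of the random projective walk along the fibers, and this is where the gap $\fle(\muh)>L_2(\muh)$ enters decisively alongside quasi-irreducibility. Together they place the random action on $\Pp(\R^m)$ in a proximal regime in which, for typical $\omega$, the iterated direction $\widehat{\Ascr^j(\omega)(\theta^*)\,\hat p^*}$ concentrates near an Oseledets top direction that varies non-trivially with $\omega$ and $\theta^*$. Combining this fiberwise spreading with the base-space density of random translates in $\T^d$ delivers the required $\varepsilon$-dense coverage of $\QP$ and closes the argument.
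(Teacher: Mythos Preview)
Your steps (i) and (ii) are fine and indeed match the paper (your recursion gives $\norm{\dle_{n+1}-\Qop_\QP\dle_n}_\infty=O(1/n)$ rather than $\norm{\dle_n-\Qop_\QP\dle_n}_\infty=O(1/n)$, but that variant of almost-invariance is enough for the dichotomy machinery). The genuine gap is step (iii): the projective orbit-density statement you need is \emph{false} in general, and your own heuristic actually explains why. Under the gap $\fle(\muh)>L_2(\muh)$ the random projective action is proximal, so for large $j$ the cloud of endpoints $\bigl(\tau_\omega^j(\theta^\ast),\widehat{\Ascr^j(\omega)(\theta^\ast)\hat p^\ast}\bigr)$ \emph{concentrates} near the support of the forward stationary measure in the $\Pp(\R^m)$ fibers; it does not spread out over all of $\QP$. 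Hence the analogue of Proposition~\ref{orbit density} on $\QP$ cannot hold as you stated it (``$\varepsilon$-dense in $\QP$''), and the nested-ball construction of Theorem~\ref{convergence dichotomy} breaks down: you cannot guarantee that the robust forward orbit of $(\theta^\ast,\hat p^\ast)$ enters an \emph{arbitrary} small ball in $\QP$, only balls meeting the support of the stationary measure. That ``fiberwise spreading'' you invoke is precisely what proximality forbids.

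The paper does not attempt to lift the dichotomy to $\QP$. Instead it decouples the norm and the direction: writing
\[
\dle_n(\theta,\hat p)=\lexp_n(\theta)+\frac{1}{n}\,\EE\!\left[\log\frac{\norm{\Ascr^n(\theta)\,p}}{\norm{\Ascr^n(\theta)}}\right],
\]
the first term converges uniformly by Theorem~\ref{Ln(theta) converges uniformly}. For the second, the paper argues by contradiction along a bad sequence $(\theta_n,\hat p_n)\to(\theta,\hat p)$: using the gap and the Avalanche Principle it proves that the most expanding direction $\medp_n(\omega,\theta_n)$ converges in probability to $\medp_\infty(\omega,\theta)$ (Lemma~\ref{medir conv probability}), and quasi-irreducibility forces $\medr_\infty(\omega,\theta)\cdot p\neq 0$ almost surely. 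Passing to a subsequence, $\norm{\Ascr^n(\theta_n)p_n}/\norm{\Ascr^n(\theta_n)}$ stays bounded away from $0$ almost surely, so the second term tends to $0$ by dominated convergence, contradicting the assumed failure of uniform convergence. The gap hypothesis is thus used \emph{for contraction} (existence and stability of the top direction), not for any spreading of the projective orbit.
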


\begin{proof}
Assume by contradiction that the convergence is not uniform, then there exists a sequence of $\{(\theta_n, \hat p_n)\}$ such that $\dle_n(\theta_n, \hat p_n)$ is strictly bounded (by a constant $C>0$) away from $L_1(\muh)$ for all $n$ (we should bear in mind that here $\{n\}$ is a subsequence). Passing to a subsequence, we have $\theta_n \to \theta \in \T^d$.

Given a matrix $g\in \GL_m(\R)$, denote by $\medp(g)\in\Pp(\R^m)$  the most expanding direction of $g$ and by $\medr(g)$ any unit vector in the class $\medp(g)$. Let $\medp_n(\omega,\theta_n):=\medp\left( \Ascr^n(\omega)(\theta_n)\right)$ be the most expanding direction of the matrix $\Ascr^n(\omega)(\theta_n)$.
Similarly, for any $\theta\in \T^d$, let $\medp_\infty(\omega,\theta)$ denote the $\muh^\Z$-almost sure limit of the point $\medp_n(\omega,\theta)$ (this is ensured by a simple adaptation of~\cite[Proposition 4.4]{DKLEbook} which we shall explain in detail below). Unit vector representatives of these projective points will be respectively denoted by $\medr_n(\omega,\theta)$ and $\medr_\infty(\omega,\theta)$.

\begin{lemma}
	\label{medir conv probability}
For every $\theta\in\T^d$ and every sequence $\theta_n\to \theta$ in $\T^d$, up to choosing a subsequence of $\{\theta_n\}$, the following convergence in probability holds
$$ \lim_{n\to +\infty} \muh^\Z \{\omega\in X\colon \,
d\left( \medp_n(\omega,\theta_n),
\medp_\infty(\omega,\theta)\right) > c\,\} =0\qquad \forall\, c>0 . $$
\end{lemma}

\begin{proof}

Fix a small number $\delta>0$ and $0<\varepsilon <\lambda:=\frac{1}{12}\,( \fle(\muh)-L_2(\muh))$.
%The sequence of functions (of the variable $\theta$)	
%\begin{align*}
% \frac{1}{n}\,\EE\left[ \, \log  \frac{\norm{\Ascr^{2n}(\theta)}}{\norm{\Ascr^{n}(\theta+\tau^n)}\,\norm{\Ascr^{n}(\theta)}} \, \right] &= 2\, L_{2n}(\theta)-\lexp_n(\theta)-(\Qop_\mu^n \lexp_n)(\theta)
%\end{align*}
%converges uniformly to zero. Notice that by the Markov property of $\Qop$
%$$ \norm{\Qop_\mu^n \lexp_n - \fle(\muh)}_\infty
%= \norm{\Qop_\mu^n (\lexp_n - \fle(\muh))}_\infty
%\leq \norm{ \lexp_n - L_1(\muh) }_\infty .$$
%Hence, if we denote
%$$h_n(\omega, \theta):=\frac{1}{n}\, \log  \frac{\norm{\Ascr^{2n}(\omega)(\theta)}}{\norm{\Ascr^{n}(T^n \omega)(\theta+\tau^n(\omega))}\,\norm{\Ascr^{n}(\omega)(\theta)}}\leq 0,$$
%then for any fixed $\theta\in \T^d$,
%$$ \Pp[\, h_n(\omega, \theta) \leq -\varepsilon \,] \leq -\frac{1}{\varepsilon}\, \EE[\, h_n(\omega, \theta) \,]\,  \text{ converges uniformly  to }\, 0. $$
%There exists $n_0=n_0(\varepsilon,\delta)\in\N$ such that for all $n\geq n_0$ and $\theta\in\T^d$, for $\omega$ outside the set $\Bscr_n=\Bscr_n(\theta):= [\, h_n(\omega, \theta) \leq -\varepsilon \,]$ with $\Pp(\Bscr_n)<\delta/4$ we have
%\begin{equation}
%\label{angle cond}
%\frac{\norm{\Ascr^{2n}(\omega)(\theta)}}{\norm{\Ascr^{n}(T^n \omega)(\theta+\tau^n(\omega))}\,\norm{\Ascr^{n}(\omega)(\theta)}}> e^{-n\,\varepsilon} .
%\end{equation}

The gap ratio of a matrix  $g\in \GL_m(\R)$ is defined by
$$\gpr(g):=\frac{s_2(g)}{s_1(g)}:= \frac{\norm{\wedge_2 g}}{\norm{g}^2} .$$

%By the definition of $\lambda$, Theorem~\ref{fiber upper ldt} and Corollary \ref{corfle}, for any $\theta\in \T^d$, for $\muh^\Z$-almost every $\omega\in X$,
%$$
%\limsup_{n\to \infty} \frac{1}{n}\, \log  \frac{\norm{\wedge_2 \Ascr^{n}(\omega)(\theta)}}{\norm{\Ascr^{n}(\omega)(\theta)}^2} =  -2\,\lambda <0 .$$

By~\cite[Theorem 3.1]{CDK-paper1} and Theorem~\ref{non uniform LDE}, there exists $n_1(\delta,\epsilon,\lambda)$ such that for any $n>n_1$, there exists a Borel set
$\Bscr_n'(\theta,\lambda):=\Bscr_n'(\theta)\subset X$ with $\muh^\Z(\Bscr_n'(\theta))<\delta/2$
such that for all $\theta\in\T^d$ and $\omega\notin \Bscr_n'(\theta)$
\begin{equation}\label{limsu}
\limsup_{n\to \infty} \frac{1}{n}\, \log  \norm{\wedge_2 \Ascr^{n}(\omega)(\theta)}\leq 2L_1(\muh)-12\lambda.
\end{equation}
and
\begin{equation}\label{lim}
\left\vert \frac{1}{n}\,\log\norm{\Ascr^n(\omega,\theta)} - L_1(\muh)\right\vert \leq \varepsilon.
\end{equation}
Therefore, we have
$$ \frac{1}{n}\, \log  \frac{\norm{\wedge_2 \Ascr^{n}(\omega)(\theta)}}{\norm{\Ascr^{n}(\omega)(\theta)}^2} <-10\lambda .$$
This implies that
\begin{equation}
\label{gap cond}
\frac{ s_2(\Ascr^{n}(\omega)(\theta))}{\norm{\Ascr^{n}(\omega)(\theta)}} = \frac{\norm{\wedge_2 \Ascr^{n}(\omega)(\theta)}}{\norm{\Ascr^{n}(\omega)(\theta)}^2} \leq e^{-10n\,\lambda} .
\end{equation}

Take any $n\geq n_1$ and consider now the exceptional set	
$$ \Bscr_n^\ast(\theta):=\Bscr_n'(\theta_n)\cup \Bscr_n'(\theta)$$	
which has measure $\muh^\Z(\Bscr_n^\ast(\theta))<\delta$ uniformly in $\theta$. Therefore, we have that the final exceptional set $\Omega^c:=\bigcup_{\theta\in \T^d} \Bscr_n^\ast(\theta) \times \{\theta\}$ satisfies $(\muh^\Z\times m)(\Omega^c)<\delta$.

By Lemma 4.2 in~\cite{DKLEbook}, for $\delta>0$ given above, there exist $r\in \N$ such that for any $N,n\in \N$ with $n\geq r$ and $N\geq rn$, we can construct a $\delta$-doubling sequence $\{m_i\}_{i\geq0}$ such that $m_0=n$, $m_k=N$ for some $k\geq 1$ and $\lvert m_i-2m_{i-1}\rvert<\delta m_i$ for all integers $i\geq 1$.

By Lemma 4.3 in~\cite{DKLEbook}, for $\delta>0$ given above and the measurable set $\Omega$ with $(\muh^\Z\times m)(\Omega)>1-\delta$, there exists a measurable subset $\Omega_0\subset \Omega$ with $(\muh^\Z\times m)(\Omega_0)>1-C\delta$ ($C$ is numerical constant) and there are integers $n\geq r$ such that for all $(\omega, \theta)\in \Omega_0$ and $N\geq rn$, we have the $\delta$-doubling sequence $\{m_i\}_{i\geq 0}$ such that $m_0=n$, $m_k=N$ for some $k\geq 1$ and more importantly $f^{m_i}(\omega,\theta)\in \Omega$ for all $0\leq i<k$.

It is not difficult to see that the proof of~\cite[Lemma 4.3]{DKLEbook} can be modified to ensure that for any $\theta\in \T^d$, the set of $\omega\in X$ such that $(\omega,\theta)\in \Omega_0$ is of measure $1-C\delta$; for that, the stopping time $m (x)$ in this proof should be chosen uniformly in $\theta\in\T^d$.
This is possible essentially because by~\cite[Lemma 2.5]{CDK-paper1}, the convergence in Birkhoff's ergodic theorem for continuous observables is uniform in the variable $\theta$. In order to use this result, one should approximate the indicator function $\ind_\Omega$ by a continuous function and take into account the special form of the set $\Omega$.
We leave the remaining details to the reader.

%Moreover, by~\cite[Lemma 2.5]{CDK-paper1} we have the uniform convergence in every $\theta\in \T^d$ of the Birkhoff sums for $\muh^\Z$-almost every $\omega\in X$. Note also that our $\muh^\Z\times m$ is regular. Therefore take two compact sets $A,B$ with $A\subset B\subset \Omega$, $(\muh^\Z\times m)(A)\geq 1-3\delta$ and $(\muh^\Z\times m)(B)\geq 1-2\delta$, by approximating the indicator function $\mathds{1}_{\Omega}$ with continuous functions $\varphi$ being constant $1$ in $A$, supported on $B$ and vanishes outside of $B$, we can adapt Lemma \ref{Birkhoff LDE1} to the proof of Lemma 4.3 in~\cite{DKLEbook} so that $\Omega_0$ satisfies that for any $\theta\in \T^d$, the set of $\omega\in X$ such that $(\omega,\theta)\in \Omega_0$ is of measure $1-C\delta$ (this is ensured by the special form of $\Omega$ along with the fact that ``$m(x)$'' in the proof is again well defined for $\muh^\Z$-a.e. $\omega$ and is independent of $\theta\in\T^d$).

Given $(\omega,\theta) \in \Omega_0$, up to enlarging $n$ by $2n$, the pairs of matrices $\Ascr^{m_i}(\omega,\theta), \Ascr^{m_{i+1}-m_i}(f^{m_i}(\omega,\theta))$; $\Ascr^{m_i}(\omega,\theta_n), \Ascr^{m_{i+1}-m_i}(f^{m_i}(\omega,\theta_n))$ satisfy the gap condition (\ref{gap cond}) with $\kappa_{ap}:= e^{-\,10m_i\lambda}$, $0\leq i<k$.
On the other hand, again by $(\ref{limsu})$ and $(\ref{lim})$ we have
$$
\frac{\norm{\Ascr^{m_{i+1}}(\omega,\theta)}}{\norm{\Ascr^{m_{i+1}-m_i}(f^{m_i} (\omega,\theta))}\,\norm{\Ascr^{m_i}(\omega,\theta)}}> e^{-2m_{i+1}\,\varepsilon}>e^{-5m_i\,\varepsilon}
$$
for all $\theta\in \T^d$ and $0\leq i<k$. Thus the angle assumption is also fulfilled with $\varepsilon_{ap}:= e^{-5\,m_i\varepsilon}$. Notice that by the choice of $\varepsilon$
we have $\kappa_{ap}\ll \varepsilon_{ap}^2$.

This ensures that we can apply the Avalanche Principle for every pair $(m_i,m_{i+1})$, $0\leq i <k$.

By the Avalanche Principle, given $c>0$, we have
for any $(\omega,\theta)\in \Omega_0$, for any $0\leq i<k$
$$ d(\medp_{m_i}(\omega,\theta), \medp_{m_{i+1}}(\omega,\theta) )<\frac{c}{4^{i+1}} $$
and
$$ d(\medp_{m_i}(\omega,\theta_n), \medp_{m_{i+1}}(\omega,\theta_n) )<\frac{c}{4^{i+1}} .$$

This immediately shows that
\begin{equation}\label{aslimit}
d(\medp_{n}(\omega,\theta), \medp_{N}(\omega,\theta) )<\frac{c}{3} \end{equation}
and
$$ d(\medp_{n}(\omega,\theta_n), \medp_{N}(\omega,\theta_n) )<\frac{c}{3}.$$

Note that since $\delta>0$ can be taken arbitrarily small, by $(\ref{aslimit})$ we have that for any $\theta\in \T^d$, $\medp_n(\omega,\theta)$ converges $\muh^\Z$-almost surely to $\medp_\infty(\omega,\theta)$.

On the other hand, since $\theta_n$ converges to $\theta$,
the sequence of matrices $\Ascr^n(\omega)(\theta_n)$ converges to
	   $\Ascr^n(\omega)(\theta)$ uniformly in $(\omega,\theta)\in X$. Passing to a subsequence of $\{\theta_n\}$, we obtain
$$ d( \medp_n(\omega,\theta_n) ), \medp_n(\omega,\theta)  )<\frac{c}{3} .$$
Adding up the previous inequalities we have,
for $(\omega,\theta) \in \Omega_0$,
\begin{align*}
d(\medp_N(\omega,\theta_n), \medp_N(\omega,\theta) ) &\leq
d(\medp_N(\omega,\theta_n), \medp_n(\omega,\theta_n))  \\
&\quad + \,
d( \medp_n(\omega,\theta_n), \medp_n(\omega,\theta)  ) \\
&\quad + d( \medp_n(\omega,\theta), \medp_N(\omega, \theta) )   )\\
&<\frac{c}{3}  + \frac{c}{3} + \frac{c}{3} = c.
\end{align*}

Therefore,
$$ \lim_{n\to +\infty} \muh^\Z \{\omega\in X\colon \,
d\left( \medp_n(\omega,\theta_n),
\medp_n(\omega,\theta)\right) > c\,\} =0\qquad \forall\, c>0 .$$

As noted above, $\medp_n(\omega,\theta)$ converges $\muh^\Z$-almost surely to $\medp_\infty(\omega,\theta)$ for any $\theta\in \T^d$,
and hence it also converges in measure. This completes the proof.
\end{proof}

Note that the hyperplane $\medr_\infty(\omega,\theta)^\perp$ is the sum of all
invariant subspaces in Oseledets decomposition associated with Lyapunov exponents $<\fle(\mu)$.  By Lemma~\ref{medir conv probability},
\begin{align}
\label{conv measure}
\frac{\norm{\Ascr^n(\omega)(\theta_n)\,p_n}}{\norm{\Ascr^n(\omega)(\theta_n)}}
 \geq \vert \medr_n(\omega, \theta_n)\cdot p_n\vert \longrightarrow
\vert \medr_\infty(\omega,\theta)\cdot p \vert
\end{align}
with convergence in probability.
Hence, if $\medr_\infty(\omega,\theta)\cdot p=0$
then
$$\limsup_{n\to +\infty} \frac{1}{n}\,\log\norm{\Ascr^n(\omega)(\theta) p}<\fle(\mu).$$
Therefore, because
$\frac{1}{n}\,\log\norm{\Ascr^n(\omega)(\theta) p}$ converges almost surely
to $\fle(\mu)$, we must have $ \medr_\infty(\omega,\theta)\cdot p \neq 0$
for $\muh^\Z$-almost every $\omega$.

The convergence in measure in~\eqref{conv measure}
implies that there exists a subsequence $n_j$ of integers such that
$\vert \medr_{n_j}(\omega,\theta_{n_j})\cdot p_{n_j}\vert $ converges to $\vert \medr_\infty(\omega,\theta)\cdot p \vert  $  for $\muh^\Z$-almost every $\omega\in X$.
Thus,  possibly passing to a subsequence again, we can assume that $\muh^\Z$-almost surely
\begin{align*}
\liminf_{n\to \infty} \frac{\norm{\Ascr^n(\omega)(\theta_n)\,p_n}}{\norm{\Ascr^n(\omega)(\theta_n)}}
>0  .
\end{align*}
This implies that
$\frac{1}{n}\,\log \frac{\norm{\Ascr^n(\omega)(\theta_n)\,p_n}}{\norm{\Ascr^n(\omega)(\theta_n)}}$ converges to zero $\muh^\Z$-almost surely.
Therefore, by the Dominated Convergence Theorem, and because
$\lexp_n(\theta)$ converges uniformly to $\fle(\muh)$,
\begin{align*}
&\lim_{n\to +\infty} \dle_n(\theta_n, \hat p_n) \\
=&\lim_{n\to +\infty} \frac{1}{n}\, \EE\left[\, \log \norm{\Ascr^n(\theta_n)\, p_n} \,\right]\\
=& \lim_{n\to +\infty} \frac{1}{n}\, \EE\left[\, \log \norm{\Ascr^n(\theta_n)} \,\right] + \lim_{n\to +\infty} \frac{1}{n}\, \EE\left[\, \log \frac{\norm{\Ascr^n(\theta_n)\,p_n}}{\norm{\Ascr^n(\theta_n)}} \,\right]\\
=&\lim_{n\to +\infty}   \lexp_n(\theta_n) + 0 = \fle(\muh) .
\end{align*}
This contradicts to $\dle_n(\theta_n, \hat p_n)$ being bounded away from $L_1(\muh)$ by a constant $C>0$.
This concludes the proof.
\end{proof}

\subsection*{Acknowledgments} The second author would like to thank Alex Furman for indicating him the correct references for his work on the positivity criterion.

The first author was supported  by Funda\c{c}\~{a}o para a Ci\^{e}ncia e a Tecnologia, under the project PTDC/MAT-PUR/29126/2017.
The second author was supported  by Funda\c{c}\~{a}o para a Ci\^{e}ncia e a Tecnologia, under the projects: UID/MAT/04561/2013 and   PTDC/MAT-PUR/29126/2017.
The third author was supported by the CNPq research grants 306369/2017-6 and 313777/2020-9 and by the Coordena\c{c}\~ao de Aperfei\c{c}oamento de Pessoal de N\'ivel Superior - Brasil (CAPES) - Finance Code 001.
\bigskip

\providecommand{\bysame}{\leavevmode\hbox to3em{\hrulefill}\thinspace}
\providecommand{\MR}{\relax\ifhmode\unskip\space\fi MR }
% \MRhref is called by the amsart/book/proc definition of \MR.
\providecommand{\MRhref}[2]{%
  \href{http://www.ams.org/mathscinet-getitem?mr=#1}{#2}
}
\providecommand{\href}[2]{#2}

\end{document}